\numberwithin{equation}{section}
\newcommand{\iso}{\cong}
\newcommand{\isoto}{\xrightarrow{\simeq}}
\newcommand{\Z}{\mathbb Z}
\newcommand{\R}{\mathbb R}
\newcommand{\T}{\mathbb T}
\newcommand{\F}{F}
\newcommand{\op}{\mathsf{op}}
\newcommand{\GL}{\mathrm{GL}}
\newcommand{\GLp}{\mathrm{GL}^+}
\newcommand{\PSL}{\mathrm{PSL}}
\newcommand{\Lie}{\mathrm{Lie}}
\newcommand{\Bdot}{B_\bullet}
\newcommand{\BdotG}{B_\bullet G}
\newcommand{\EdotG}{E_{\bullet}G}
\newcommand{\BdotH}{B_\bullet H}
\newcommand{\BnablaG}{B_\nabla G}
\newcommand{\EnablaG}{E_\nabla G}
\newcommand{\GFG}{\Gamma \backslash F / G}
\newcommand{\GFR}{\Gamma \backslash F / \R}
\newcommand{\GF}{\Gamma \backslash F}
\newcommand{\Diff}{\mathrm{Diff}^+(S^1)}
\newcommand{\Witt}{\mathfrak{w}}
\newcommand{\pr}{\mathrm{pr}}
\newcommand{\point}{\ast}
\DeclareMathOperator{\Hom}{Hom}
\newtheorem{lem}[equation]{Lemma}
\newtheorem{cor}[equation]{Corollary}
\newtheorem{prop}[equation]{Proposition}
\newtheorem{thm}[equation]{Theorem}
\newtheorem{theorem}[equation]{Theorem}
\newtheorem*{mainthm}{\cref{main_thm}}
\theoremstyle{definition}
\newtheorem{exm}[equation]{Example}
\newtheorem{defn}[equation]{Definition}
\theoremstyle{remark}
\newtheorem{rem}[equation]{Remark}
\crefname{thm}{Theorem}{Theorems}
\crefname{lem}{Lemma}{Lemmas}
\crefname{cor}{Corollary}{Corollaries}
\crefname{prop}{Proposition}{Propositions}
\crefname{ex}{Exercise}{Exercises}
\crefname{exm}{Example}{Examples}
\crefname{defn}{Definition}{Definitions}
\crefname{claim}{Claim}{Claims}
\crefname{rem}{Remark}{Remarks}
\crefname{fct}{Fact}{Facts}
\crefname{note}{Note}{Notes}
\newcommand{\term}{\emph} % e.g. "The \term{trace} is defined to be..."
\DeclarePairedDelimiter\set{\{}{\}}
\DeclarePairedDelimiter\paren{(}{)}
\DeclarePairedDelimiter\ket{\lvert}{\rangle}
	\let\oldparen\paren
	\def\paren{\@ifstar{\oldparen}{\oldparen*}}
\newcommand{\cat}{\mathsf}
\newcommand{\Man}{\cat{Man}}
\newcommand{\Sh}{\cat{Sh}}
\newcommand{\Ch}{\cat{Ch}}
\newcommand{\sSet}{\cat{sSet}}
\newcommand{\Sym}{\mathrm{Sym}}
\newcommand{\fg}{\mathfrak g}
\newcommand{\fk}{\mathfrak k}
\newcommand{\gl}{\mathfrak{gl}}
\newcommand{\so}{\mathfrak{so}}
\newcommand{\SO}{\mathrm{SO}}
\newcommand{\tr}{\mathrm{tr}}
\renewcommand{\d}{\mathrm d}
\newcommand{\ud}{\,\d}
\renewcommand{\dh}{\d^h}
\newcommand{\dv}{\d^v}
\newcommand{\tGam}{\widetilde\Gamma}
\newcommand{\CExt}{\mathrm{CExt}}
\newcommand{\Map}{\mathrm{Map}}
\newcommand{\GpVir}{\widetilde\Gamma}
\newcommand{\bl}{\text{--}}
\newcommand{\fR}{\mathfrak{g}_\R}
\newcommand{\fu}{\mathfrak u}
\newcommand{\U}{\mathrm{U}}
\newcommand{\Inv}[1][*]{I^{#1}}
\newcommand{\ChW}{\mathit{CW}}
\newcommand{\Vir}{\mathit{\mathcal Vir}}
\title{Constructing the Virasoro groups using differential cohomology}
\author{Arun Debray}
\email{adebray@purdue.edu}
\author{Yu Leon Liu}
\email{yuleonliu@math.harvard.edu}
\author{Christoph Weis}
\email{weis@maths.ox.ac.uk}
\date{\today}
\begin{document}    

\begin{abstract}
The Virasoro groups are a family of central extensions of $\mathrm{Diff}^+(S^1)$, the group of
orientation-preserving diffeomorphisms of $S^1$, by the circle group $\mathbb T$. We give a novel, geometric construction of
these central extensions using ``off-diagonal'' differential lifts of the first Pontryagin class, thus affirmatively answering a question of Freed-Hopkins.
\end{abstract}
\maketitle

\tableofcontents

\setcounter{section}{-1}
\section{Introduction}
\label{sec:intro}
	%\TODO

%First part: projective symmetries in 2d CFT are a source of interesting central extensions of groups. Includes WZW
%and loop groups and the Virasoro.
%
Two-dimensional conformal field theories (CFTs) are a Goldilocks zone in mathematical physics: high-dimensional
enough to admit many examples and a rich structure, but still mathematically tractable. This is especially true for their
groups of symmetries. In all other dimensions, the group of conformal symmetries is finite-dimensional, but in two
dimensions, there is an infinite-dimensional family of conformal symmetries: the group $\Gamma\coloneqq \Diff$ of orientation-preserving diffeomorphisms of a circle. $\Diff$ acts on
the Hilbert space $\mathcal H$ of states that a 2d CFT assigns to a circle, and many important representations of
$\Diff$ arise in this way. However, as is generally the case in quantum physics, these are merely
projective symmetries: if $\ket\psi\in\mathcal H$ and $\lambda\in\T$, where $\T$ denotes the unit complex numbers,
then the states $\ket\psi$ and $\lambda\ket\psi$ are physically equivalent. Therefore the group that actually acts,
which is called a \term{Virasoro group}, is a central extension of $\Diff$ by $\T$. There is an $\R$ worth of Virasoro extensions; which central extension we
obtain depends on the CFT we began with. 

The Virasoro group extensions $\tGam_\lambda$ of $\Diff$ are defined as follows: as spaces,
$\tGam_\lambda\cong\T\times\Gamma$. However, the multiplication is twisted: as a map $(\T\times\Gamma)\times
(\T\times\Gamma)\to \T\times\Gamma$, multiplication obeys the formula
\begin{equation}
	(z_1, \gamma_1), (z_2, \gamma_2) \longmapsto (z_1 \cdot z_2 \cdot B_\lambda(\gamma_1, \gamma_2),
	\gamma_1\circ\gamma_2),
\end{equation}
where $B\colon\Gamma\times\Gamma\to\T$ is the \term{Bott-Thurston cocycle} ~\cite{Bot77} 
\begin{equation}
%\label{bcocyc}
	B_\lambda(\gamma_1, \gamma_2) \coloneqq \exp\left(-\frac{i\lambda}{48\pi}\int_{S^1} \log(\gamma_1'
	\circ\gamma_2) \ud(\log(\gamma_2'))\right). 
\end{equation}
See \S\ref{sec:central_extensions} for the details, and \cref{rem-on-witt} for the relationship to the Virasoro
algebras, which may be more familiar.

A similar story can happen for other groups. For example, there is a class of 2d CFTs called
\term{Wess-Zumino-Witten (WZW) models}, given by choosing a compact Lie group $G$ and an element $h\in H^4(BG;\Z)$,
which admit a projective symmetry for the (unbased) loop group $LG$ of $G$. The corresponding central extensions by
$\T$, called \term{Kac-Moody groups}, are common objects of study in representation theory.\footnote{The WZW models
still have a projective $\Diff$-action, and the formulas for the central extensions of $\Diff$ and of $LG$ that
appear for a particular choice of $h$ are related by the \term{Segal-Sugawara formula}; see~\cite{KZ84} for more
information.}

Brylinski-McLaughlin~\cite[\S 5]{BM94} give a geometric construction of the Kac-Moody central extensions of loop groups using differential
cohomology, and the goal of this paper is to do a similar construction to obtain the Virasoro central extensions of
$\Diff$. First we briefly sketch Brylinski-McLaughlin's construction. Let $\Z(n)$ denote the
$n$th \term{Deligne complex}~\cite[\S 2.2]{Del71}, a complex of sheaves on the site $\Man$ of smooth
manifolds given by
\begin{equation}
	\Z(n)\coloneqq \Big(\!
	\begin{tikzcd}
		0 & \Z & {\Omega^0} & {\Omega^1} & \dotsb & {\Omega^{n-1}} & 0
		\arrow[from=1-1, to=1-2]
		\arrow[from=1-2, to=1-3]
		\arrow[from=1-3, to=1-4]
		\arrow[from=1-4, to=1-5]
		\arrow[from=1-5, to=1-6]
		\arrow[from=1-6, to=1-7]
	\end{tikzcd}\!
\Big).
\end{equation}
For $M$ a smooth manifold, $H^n(M;\Z(n))$ is naturally isomorphic to the group $\check H^n(M)$ of degree-$n$
Cheeger-Simons differential characters.\footnote{Cheeger-Simons' indexing convention differs from ours, and both
conventions appear in the literature; we follow~\cite{ADH21}. In our convention, the characteristic class map is
degree-preserving, with signature $\check H^n(M)\to H^n(M;\Z)$.\label{off_by_one}} We are interested in
$H^k(M;\Z(n))$ when $k$ is not necessarily equal to $n$. For $H$ a Lie group, possibly infinite-dimensional, let
$\BdotH$ denote the classifying stack for principal $H$-bundles (see \cref{Bdot}). Then the group
of equivalence classes of central extensions of $H$ by $\T$ can be naturally and explicitly identified with
$H^3(\BdotH;\Z(1))$~\cite[Corollary 17.3.3]{ADH21} (here \cref{TExtZ1}). In the case at hand, $H = LG$ is the loop group of
$G$, where $G$ is a compact, connected finite-dimensional Lie group, and there is an equivalence of stacks $L\BdotG
\simeq \Bdot LG$. We will see in \cref{compact_off_diag} that for compact $G$, the truncation map $t\colon
\Z(n)\to\Z$ induces an isomorphism
\begin{equation}
	t\colon H^{2n}(\BdotG;\Z(n))\longrightarrow H^{2n}(\BdotG;\Z) = H^{2n}(BG;\Z),
\end{equation}
where $BG$ is the classifying space of $G$ in the usual sense. Thus the level $h\in H^4(BG;\Z)$ used to define the
WZW model refines to a class $\widetilde h\in H^4(\BdotG;\Z(2))$. Using the diagram
\begin{equation}
\label{eqn:ev_on_s1}
\begin{tikzcd}
S^1 \times L\BdotG \arrow[r, "q"] \arrow[d, "p"]
& \BdotG  \\
L\BdotG,
& 
\end{tikzcd}
\end{equation}
where $q$ is the evaluation map and $p$ is projection onto the second factor, we can pull back along $q$ and then push
forward along $p$. The latter operation is realized by integrating over the $S^1$ fiber. This defines a
\term{transgression map} for compact, connected $G$:
\begin{equation}
	\tau\colon H^{4}(\BdotG;\Z(2)) \longrightarrow H^4(S^1 \times L\BdotG; \Z(2))\longrightarrow H^3(\Bdot LG; \Z(1)).
\end{equation}
Brylinski-McLaughlin showed that $\tau(\widetilde h)$ recovers the Kac-Moody central extension of $LG$
at level $h$, which acts on the WZW theory associated to $G$ and $h$. 

Freed-Hopkins conjectured that a similar procedure could construct the Virasoro groups~\cite[Question
17.3.8]{ADH21}. Specifically, let $\GL_n^+(\R)$ be the group of invertible, orientation-preserving $n\times n$
matrices and begin with the first Pontryagin class $p_1\in H^4(B\GL_n^+(\R);\Z)$. As $\GL_n^+(\R)$ is not compact, lifting to
Deligne cohomology is not automatic, but by~\cite[\S 17.3]{ADH21} (here \cref{GLn_off_diag}) there is an affine
line of lifts $\hat{p}_1^\lambda \in H^4(\Bdot\GLp_n(\R);\Z(2))$, labeled by $\lambda \in \R$.
Furthermore, there is a distinguished lift $\hat{p}_1$ that satisfies the (differential) Whitney sum formula
(\cref{distinguishedlifts}).\footnote{The case $n=1$ is special: the 1st Pontryagin class of $\GLp_1(\R)$ is trivial and we have a one-dimensional vector space of differential lifts. See \cref{lineofliftsforGLp1}.}

Let $E\to \Bdot\Diff$ be the
universal circle bundle and $V\to E$ its vertical tangent bundle; $E$ is the stacky quotient $S^1/\Diff$.
Because the $\Diff$-action on $S^1$ is orientation-preserving, $V$ is oriented, so there is a classifying
map $q: E \rightarrow \Bdot \GL^+_1(\R)$, and we have the following diagram:
\begin{equation}
%\label{eqn:transgression_space_diagram}
\begin{tikzcd}
E \arrow[r, "q"] \arrow[d, "p"]
& \Bdot \GL^+_1(\R) \\
\Bdot\Diff.
& 
\end{tikzcd}
\end{equation}
Once again, we can pull back and integrate over the fiber to give a map
\begin{equation}
	\int_{S^1}\circ\ q^*\colon H^4(\Bdot \GL^+_1(\R);\Z(2))\longrightarrow H^3(\Bdot\Diff;\Z(1)).
\end{equation}

So after choosing a lift $\hat{p}^\lambda_1$ of $p_1$, we obtain a central extension of $\Diff$ given by the class
$\int_{S^1} \hat{p}^\lambda_1(V)\in H^3(\Bdot\Diff;\Z(1))$ --- we suppress $q^\ast$ from notation. Freed-Hopkins' conjecture asserts that the family of
extensions of $\Diff$ given by all choices of $\hat{p}^\lambda_1$ is precisely the family of Virasoro extensions.
\begin{mainthm}
The transgression homomorphism
%\[ \hat{p}^\lambda_1\mapsto \int_{S^1} \hat{p}^\lambda_1(V)\colon H^4(\Bdot\GL_1^+(\R);\Z(2))\longrightarrow
%H^3(\Bdot\Diff;\Z(1))\]
\begin{align}
    \begin{split}
        H^4(\Bdot\GL_1^+(\R);\Z(2)) &\to H^3(\Bdot\Diff;\Z(1)) \\
        \hat{p}^\lambda_1 &\mapsto \int_{S^1} \hat{p}^\lambda_1(V)
    \end{split}
\end{align}
maps the $\R$ worth of lifts of $p_1$ isomorphically to the $\R$ of Virasoro central extensions of $\Diff$. Furthermore, it takes the distinguished differential lift $\hat{p}_1$ to the Virasoro central extension $\tGam_{-12}$ with central charge $-12$.
\end{mainthm}

Our proof proceeds mostly at the cocycle level. First, though, we prove \cref{pullback_to_diff_forms}, that the
lifts of $p_1$ are in the image of a map $H^2(\Bdot\GL_1^+(\R);\Omega^1)\to H^4(\Bdot \GL_1^+(\R); \Z(2))$.
This allows us to compute the transgression map at the level of differential forms. To do so, we model the
universal circle bundle $E$ over $\Bdot\Diff$ as the stacky double quotient $\Diff\backslash F/\GL_1^+(\R)$,
where $F$ is the frame bundle of $S^1$. This double quotient has a bisimplicial presentation resolving both the
left $\Diff$- and right $\GL_1^+(\R)$-actions. It interpolates between the simplicial objects corresponding to the two actions. We chase the generator of the $\R$
worth of differential lifts across this double complex to obtain a form that is easier to integrate over the $S^1$
fibers. Then we integrate it and see that we obtain the Bott-Thurston cocycle.

\subsection*{Outline}
Our first few sections review information that we need to perform the computation. In
\S\ref{sec:central_extensions}, we introduce the Virasoro groups and the Bott-Thurston cocycles that define
them. We collect differential cohomology information in \S\ref{sec:diffcoh_background}, where we introduce the
Deligne complexes $\Z(n)$ and prove a few quick lemmas about them. In \S\ref{sec:off_diagonal}, we study lifts of
characteristic classes to $H^{2n}(\BdotG;\Z(n))$. The key fact in this section is \cref{botts_theorem}, a theorem
of Bott which allows one to compute $H^{2n}(\BdotG;\Z(n))$ in terms of the Chern-Weil homomorphism. We use this in
\cref{GLn_off_diag} to study the affine line of lifts of $p_1$ to $H^4(\Bdot\GL_n^+(\R);\Z(2))$, including the
distinguished lift $\hat p_1$ which satisfies the Whitney sum formula (\cref{distinguishedlifts}).

In
\S\ref{sec:cocycle} we compute explicit cocycles for the $\R$ worth of off-diagonal lifts of the first Pontryagin
class in $H^4(\Bdot \GL_1^+(\R); \Z(2))$. Importantly, \cref{pullback_to_diff_forms} allows us to work with
$\Omega^1$ instead of $\Z(2)$, simplifying the computation.
In \S \ref{sec:main_statement} we prove \cref{main_thm} by a computation through the bisimplicial object discussed above.

% The remaining sections are devoted to the proof of \ref{main_thm}. \S\ref{sec:main_statement} contains most of the ideas: we
% prove \cref{pullback_to_diff_forms}, which allows us to work with $\Omega^1$ instead of $\Z(2)$; we determine an
% explicit differential form that generates the line of lifts (\cref{explicit_x2x1}); and we set up the bisimplicial
% set $E_{\bullet, \bullet}$ mentioned above. Then, \S\ref{sec:computation} runs through the key computational
% argument: the proof of \cref{main_lemma}, accomplished by chasing the differential form from \cref{explicit_x2x1}
% across the double complex.

\subsection*{Acknowledgements}
Most of all we would like to thank Dan Freed and Mike Hopkins for sharing their question which inspired this paper,
and for many helpful mathematical discussions. This paper also benefited from conversations with Araminta Amabel, Sanath Devalapurkar, 
Peter Haine, André Henriques, Ralph Kauffman, Cameron Krulewski, Kiran Luecke, Natalia Pacheco-Tallaj, Wyatt Reeves, Charlie Reid, and Jim Stasheff; thank you to all.

\section{Central extensions of $\Diff$}
\label{sec:central_extensions}
    Let $\Gamma \coloneqq \Diff$, the group of orientation-preserving diffeomorphisms of the circle. This is a
\term{Fréchet Lie group}~\cite{Mil84}: it admits an atlas of charts valued in Fréchet spaces, and group
multiplication and inversion are Fréchet maps. The goal of this paper is to construct a particular family of
central extensions of $\Gamma$ called the Virasoro groups; in this section we discuss some basic information about
$\Gamma$ and its central extensions. See \cref{rem-on-witt} for the Lie algebra version of this story, which may be
more familiar.

We will also need to know about two subgroups of $\Gamma$. First, there is an inclusion
$\SO_2\subset\Gamma$ as rotations. We also have $i\colon \PSL_2(\R)\hookrightarrow\Gamma$ as the real
fractional linear transformations acting on $\mathbb{RP}^1 = S^1$.

For each $\lambda\in\R$, there is a Fréchet Lie group central extension of $\Gamma$ by the circle group $\T$ called
a \term{Virasoro group} and denoted $\tGam_\lambda$. As spaces, $\tGam_\lambda\cong\T\times\Gamma$.\footnote{In
fact, the inclusion $\SO_2\hookrightarrow\Gamma$ is a homotopy equivalence, so all principal $\T$-bundles
over $\Gamma$ are trivializable, including those coming from Fréchet Lie group central extensions.} However, the
multiplication is twisted: as a map $(\T\times\Gamma)\times (\T\times\Gamma)\to \T\times\Gamma$, it obeys the
formula
\begin{equation}
	(z_1, \gamma_1), (z_2, \gamma_2) \longmapsto (z_1 \cdot z_2 \cdot B_\lambda(\gamma_1, \gamma_2),
	\gamma_1\circ\gamma_2),
\end{equation}
where $B_\lambda\colon\Gamma\times\Gamma\to\T$ is the \term{Bott-Thurston cocycle} ~\cite{Bot77}:
\begin{equation}
\label{bcocyc}
	B_\lambda(\gamma_1, \gamma_2) \coloneqq \exp\left(-\frac{i\lambda}{48\pi}\int_{S^1} \log(\gamma_1'
	\circ\gamma_2) \ud(\log(\gamma_2'))\right). 
\end{equation}

The symbols in~\eqref{bcocyc} deserve further explanation. Given a diffeomorphism
$\gamma\colon S^1 \to S^1$ in $\Gamma$, 
we can lift it to a map $\tilde{\gamma}\colon \R \to \R$ along any covering map $\R \to S^1$.
The derivative $\tilde{\gamma}'\colon \R \to \R_+^\times \subset \Hom(\R,\R)$ 
descends to a function $\gamma'\colon S^1 \to \R_+^\times$ which is independent of the choices made.
(These maps land in $\R_+^\times$ because we are dealing with orientation-preserving diffeomorphisms.)
The function $\log\colon \R_+^\times \to \R$ used in~\eqref{bcocyc} is the natural
logarithm.

\begin{rem}
Bott's original formula for this cocycle was slightly different:
\begin{equation}
B_\lambda(\gamma_1, \gamma_2) \coloneqq \exp\left(-\frac{i\lambda}{48\pi} \int_{S^1} \log((\gamma_1\circ\gamma_2)')
\ud(\log(\gamma_2'))\right).
\end{equation}
This is equal to the cocycle in~\eqref{bcocyc} because $\log((\gamma_1\circ\gamma_2)') =
\log(\gamma_1'\circ\gamma_2) + \log(\gamma_2')$ and
\begin{equation}
\int_{S^1} \log(\gamma_2')\ud\log(\gamma_2') = 0  
\end{equation}
because $\log(\gamma_2')\ud(\log(\gamma_2')) = \frac{1}{2} \d(\log(\gamma_2'))^2$ is exact.
\end{rem}

\begin{rem}
\label{bosonic}
As we mentioned in the introduction, $\Gamma$ acts projectively on two-dimensional conformal field theories,
lifting to actual representations of the Virasoro groups. This allows us to choose a favorite
normalization: the constant $\tfrac{1}{48}$ is chosen so that $\tGam_1$ acts on the theory of bosonic periodic
scalars (strings). In physics terms, the normalization is set so the theory of bosonic scalars has central charge
$1$.
\end{rem}

\begin{rem} \label{vira-from-R-extension}
Let $H$ be a Fréchet Lie group. Then a central extension of $H$ by $\R$ gives rise to a central extension
$\widetilde
H$ of $H$ by $\T$ via pushout of groups:
\begin{equation}
\begin{tikzcd}
	{\R} & \widetilde H_\R\\
	\T & \widetilde H
	\arrow[from=1-1, to=1-2]
	\arrow[from=2-1, to=2-2]
	\arrow["\exp(2\pi i -)"', from=1-1, to=2-1]
	\arrow[from=1-2, to=2-2].
\end{tikzcd}
\end{equation}
For each $\lambda\in\R$, the Virasoro extension $\tGam_\lambda$ arises in this way from an extension of $\Gamma$ by
$\R$ defined using the $\R$-valued cocycle
\begin{equation}
\label{real-cocycle}
	B_{\lambda, \R}(\gamma_1, \gamma_2) \coloneqq -\frac{\lambda}{96\pi^2}\int_{S^1} \log(\gamma_1'
	\circ\gamma_2) \ud(\log(\gamma_2')). 
\end{equation}
\end{rem}

%%%%%%remark on witt & Virasoro algebra%%%%%%
\begin{rem}
\label{rem-on-witt}
Fréchet Lie groups have a notion of Lie algebras, which are Fréchet spaces; the Lie algebra of $\Gamma$ is the
Fréchet space of smooth vector fields on $S^1$ with its usual bracket. This is the completion of a Lie
subalgebra $\Witt$ called the \term{Witt algebra}, which is the Lie algebra of polynomial vector fields on $S^1$.
The Witt algebra is generated by $L_n \coloneqq -i e^{i n\theta} \frac{\partial}{\partial \theta}$, $n\in\Z$, with
commutation relations
\begin{equation}
    [L_m, L_n] = (m-n)L_{m+n}.
\end{equation}

Differentiating a central extension of (Fréchet) Lie groups produces a central extension of (Fréchet) Lie algebras. 
Applied to the Virasoro extensions $\tGam_\lambda$, we obtain a family of central extensions
$\widetilde{\Witt}_\lambda$ of the Witt algebra $\Witt$ by $\R$, called \term{Virasoro algebras}.
The set of equivalence classes of central extensions of a Lie algebra $\fg$ by $\R$ is given by the Lie algebra
cohomology group $H^2(\fg; \R)$; in \cite{GF68}, Gel'fand-Fuks showed that $\CExt_\R(\Witt)\cong\R$, with
$\lambda\in\R$ corresponding to $\widetilde{\Witt}_\lambda$.

As vector spaces,
$\widetilde{\Witt}_\lambda\cong\R\times\Witt$, and we can find Lie algebra cocycles $b_\lambda\colon
\Witt\times\Witt\to\R$ such that the Lie bracket
$\widetilde{\Witt}_\lambda\times\widetilde{\Witt}_\lambda\to\widetilde{\Witt}_\lambda$ has the formula
\begin{equation}
	[(x_1, y_1), (x_2, y_2)] \coloneqq (b_\lambda(y_1, y_2), [y_1, y_2]).
\end{equation}
where $x_1,x_2\in\R$ and $y_1,y_2\in\Witt$. 
With our normalization, these cocycles are defined on generators by
%On the level of lie algebra, $Vir_\lambda$ restricts to a $\R$-central extensions $vir_\lambda$ of the Witt algebra $\Witt$. This is given by a cocycle: $b_\lambda: \Witt \wedge \Witt \rightarrow \R$:
\begin{equation}
\label{lie_alg_cocycle}
    b_\lambda(L_m, L_n) \coloneqq \frac{\lambda}{12} m^2(m-1)\delta_{m, -n}.
\end{equation}
%\CW{I believe the paragraph below was misleading before I edited it. 
%The central element $c_\lambda$ and its prefactor $\lambda$ in the commutator 
%are often confounded. I believe it is correct to call $\lambda$ the central 
%charge and not $c_\lambda$. 
%If I am wrong about that, feel free to revert this edit --- I will not be offended.}
%\YL{The edits are fine by me}
Denote the central element generating the copy of $\R$ in $\widetilde{\Witt}_\lambda$ by $c_\lambda$. 
Its prefactor $\lambda$ in the commutator is called the \term{central charge}. The Virasoro algebras with $\lambda\ne 0$ are all
isomorphic to each other: explicitly, define $\widetilde{\Witt}_1\overset\simeq\to\widetilde{\Witt}_\lambda$ by
sending $L_m\mapsto L_m$ and $c_1\mapsto \lambda c_\lambda$; this is \emph{not} a map of central extensions, as a
map of central extensions must be the identity on the central elements. In addition, these
isomorphisms do not lift to isomorphisms of the corresponding Virasoro groups. Nonetheless, because of these
identifications, $\widetilde{\Witt}_1$ is sometimes referred to as \emph{the} Virasoro algebra.
\end{rem}
The set of equivalence classes of Fréchet Lie group central extensions of a Fréchet Lie group $G$ by $\T$ is an
abelian Lie group $\CExt_\T(G)$; in \cref{TExtZ1} we will explicitly identify it with a sheaf cohomology group. The
Virasoro central extensions define a subgroup $\Vir\subset\CExt_\T(G)$ isomorphic to $\R$.
\begin{thm}[{Segal~\cite[Corollary 7.5]{Seg81}}]
\label{segal_cext}
$\CExt_\T(\Gamma) \cong \CExt_\T(\PSL_2(\R))\times\Vir$.
%\[(i^*, d)\colon\CExt_\T(\Gamma)\to \CExt_\T(\PSL_2(\R)) \times \CExt_\R(\Witt)\]
%is an isomorphism.
\end{thm}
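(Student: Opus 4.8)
The plan is to separate central extensions of $\Gamma$ into an ``infinitesimal'' (curvature) part, captured by Lie algebra cohomology, and a ``flat'' (topological) part, controlled by the homotopy type, and to show that $\Vir$ accounts precisely for the former while $\PSL_2(\R)$ accounts for the latter. First I would differentiate: a Fréchet Lie group central extension of $\Gamma$ by $\T$ yields a central extension of the Lie algebra of $\Gamma$ by $\R$, hence, after restriction to the dense Witt subalgebra, a class in $H^2(\Witt;\R)$. By the Gel'fand--Fuks computation recalled in \cref{rem-on-witt}, $H^2(\Witt;\R)\cong\R$, and by construction the Virasoro extensions realize this isomorphism, since $\lambda\mapsto\widetilde{\Witt}_\lambda$ identifies $\Vir$ with $H^2(\Witt;\R)$. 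Thus the infinitesimal class homomorphism
\begin{equation}
    \delta\colon \CExt_\T(\Gamma)\longrightarrow H^2(\Witt;\R)\cong\R
\end{equation}
is surjective and is split by the inclusion $\Vir\hookrightarrow\CExt_\T(\Gamma)$. Writing $\CExt_\T^{\flat}(\Gamma)\coloneqq\ker\delta$ for the subgroup of \emph{flat} (infinitesimally trivial) extensions, the split short exact sequence of abelian (Lie) groups gives the internal direct product decomposition $\CExt_\T(\Gamma)\cong\CExt_\T^{\flat}(\Gamma)\times\Vir$.

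Next I would identify $\CExt_\T^{\flat}(\Gamma)$ with $\CExt_\T(\PSL_2(\R))$ via restriction. The inclusion $i\colon\PSL_2(\R)\hookrightarrow\Gamma$ induces $r\colon\CExt_\T(\Gamma)\to\CExt_\T(\PSL_2(\R))$. Since $\mathfrak{sl}_2(\R)$ is semisimple, Whitehead's second lemma gives $H^2(\mathfrak{sl}_2(\R);\R)=0$, so \emph{every} central extension of $\PSL_2(\R)$ by $\T$ is flat; in particular $r$ kills $\Vir$ (the restriction of the Bott--Thurston cocycle to $\mathfrak{sl}_2(\R)=\ang{L_{-1},L_0,L_1}$ is a coboundary, as one checks directly from \eqref{lie_alg_cocycle}), and $r$ carries $\CExt_\T^{\flat}(\Gamma)$ into $\CExt_\T(\PSL_2(\R))$. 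The heart of the argument is that this restricted map is an isomorphism. Here I would use that flat central extensions are classified by homotopy-invariant topological data: the underlying principal $\T$-bundle together with its flat structure, equivalently data built from $\Hom(\pi_1,\T)$ and characters $\Hom(-,\T)$, all of which are homotopy invariants (and the character contribution vanishes since both $\Gamma$ and $\PSL_2(\R)$ are perfect). Because the inclusions $\SO_2\hookrightarrow\PSL_2(\R)\hookrightarrow\Gamma$ are homotopy equivalences --- all three deformation retract onto the rotation subgroup $\SO_2\cong\T$ --- restriction puts the flat extensions of $\Gamma$ and of $\PSL_2(\R)$ in natural bijection. Combining the two steps yields $\CExt_\T(\Gamma)\cong\CExt_\T^{\flat}(\Gamma)\times\Vir\cong\CExt_\T(\PSL_2(\R))\times\Vir$.

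The main obstacle will be making the ``flat extensions are homotopy-invariant'' step rigorous in the Fréchet setting: one must show $\ker\delta$ is detected purely topologically, which requires a van Est--type comparison between smooth group cohomology and Lie algebra cohomology valid for the infinite-dimensional group $\Gamma$, and one must verify that passing from the Fréchet Lie algebra of all smooth vector fields to the dense Witt subalgebra neither loses nor creates degree-two cohomology, so that Gel'fand--Fuks genuinely computes the infinitesimal extensions of $\Gamma$. A secondary subtlety is the surjectivity of $\delta$ at the group level --- that the Gel'fand--Fuks class integrates to an honest group extension --- but this is supplied directly by the explicit cocycle \eqref{bcocyc}, which exhibits $\tGam_\lambda$ as a genuine Fréchet Lie group central extension realizing each infinitesimal class.
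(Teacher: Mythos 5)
The paper does not actually prove this statement: it is quoted verbatim from Segal~\cite[Corollary 7.5]{Seg81} and used as a black box (the only added content is the remark that the finer map $(i^*,d)$ of \eqref{segal_better} is an isomorphism). So there is no in-paper argument to compare against; what follows is an assessment of your proposal on its own terms.

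Your strategy --- split $\CExt_\T(\Gamma)$ by the differentiation map $\delta$ into an infinitesimal part ($H^2(\Witt;\R)\cong\R$, realized by $\Vir$) and a flat part ($\ker\delta$), then identify $\ker\delta$ with $\CExt_\T(\PSL_2(\R))$ via $\Hom(\pi_1(-),\T)$ and the homotopy equivalence $\PSL_2(\R)\hookrightarrow\Gamma$ --- is the standard modern route (it is essentially Neeb's exact sequence $\Hom(G,\T)\to\Hom(\pi_1(G),\T)\to\CExt_\T(G)\xrightarrow{\delta}H^2(\fg;\R)$ specialized to $G=\Gamma$ and $G=\PSL_2(\R)$, using that both groups are perfect and that $i_*\colon\pi_1(\PSL_2(\R))\to\pi_1(\Gamma)$ is an isomorphism), and it does reproduce \eqref{segal_simple} and the refined statement \eqref{segal_better}. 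The numerology checks out: $\ker\delta\cong\Hom(\Z,\T)\cong\T\cong\CExt_\T(\PSL_2(\R))$.

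The one step you gesture at but do not close, and which is the real mathematical content, is exactness at $\CExt_\T(\Gamma)$: why is an extension with vanishing Lie-algebra class necessarily ``flat,'' i.e.\ of the form $(\T\times\widehat\Gamma)/\pi_1(\Gamma)$ for a character of $\pi_1(\Gamma)$? This requires integrating the Lie-algebra splitting $\fg\to\R\oplus\fg$ to a group homomorphism out of the universal cover $\widehat\Gamma$, which in the Fr\'echet setting is not automatic --- it uses that $\Diff$ is a \emph{regular} Fr\'echet Lie group (Milnor/Omori), not merely a van Est--type comparison. Relatedly, $\delta$ naturally lands in the continuous cohomology of the full Lie algebra of smooth vector fields, and you need that Gel'fand--Fuks computes \emph{that} group (they do --- their computation is of the continuous cohomology of $\mathrm{Vect}(S^1)$, not just of the dense Witt subalgebra --- but as written your appeal to $H^2(\Witt;\R)$ leaves this transfer implicit). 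Finally, you quietly use that the derivative of $\tGam_\lambda$ is $\widetilde{\Witt}_\lambda$ with the matching value of $\lambda$; the paper asserts this in \cref{rem-on-witt} but it is an input, not a formality. With those three points supplied, your argument is a complete and correct proof; note that it is not obviously Segal's original argument, which proceeds through his explicit analysis of $\Diff$ rather than through the general integrability machinery.
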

In particular, the Virasoro extensions are trivial when restricted to $\PSL_2(\R)$. The summand
$\CExt_\T(\PSL_2(\R))$ is isomorphic to $\T$~\cite[\S 7]{Seg81}, so
\begin{equation}
\label{segal_simple}
	\CExt_\T(\Gamma)\overset\cong \longrightarrow \T\times\R.
\end{equation}
\begin{rem}
If $\CExt_\R(\Witt)$ denotes the vector space of Lie algebra central extensions of $\Witt$ by $\R$, then
differentiation defines a map $d\colon \CExt_\T(\Gamma)\to\CExt_\R(\Witt)$. So a more refined version of
\cref{segal_cext} is that if $i\colon \PSL_2(\R)\hookrightarrow\Gamma$ is the inclusion map, then
\begin{equation}
\label{segal_better}
	(i^*, d)\colon\CExt_\T(\Gamma)\longrightarrow \CExt_\T(\PSL_2(\R)) \times \CExt_\R(\Witt)
\end{equation}
is an isomorphism. That is, a central extension of $\Gamma$ is uniquely characterized by its derivative and its
restriction to $\PSL_2(\R)$. The Virasoro central extensions are all trivializable when restricted to $\PSL_2(\R)$,
and (when $\lambda\ne 0$) are nontrivial on the level of Lie algebras. Thus the subgroup of Virasoro central
extensions is the subgroup $\set 0\times\R$ under the isomorphism~\eqref{segal_simple}.
\end{rem}
%Let $Vir \subset \CExt_\T(\Gamma)$ be the subgroup of Virasoro central extensions.
%Let
%$\CExt_\R(\Witt)$ denote the vector space of Lie algebra central extensions of $\Witt$ by $\R$;
%differentiating defines a homomorphism $d\colon \CExt_\T(\Gamma)\to\CExt_\R(\Witt)$.
%That is, a central extension of $\Gamma$ is uniquely characterized by its derivative and its restriction to
%$\PSL_2(\R)$. Gel'fand-Fuks showed that $\CExt_\R(\Witt) \simeq H^2(\Witt,; \R) \simeq \R$~\cite{GF68}, and one can show that
%$\CExt_\T(\PSL_2(\R))\simeq \T$ (\cite[\S 7]{Seg81}). Thus
%\end{rem}

\begin{rem}
The central extensions defining the Virasoro algebras were independently discovered several times. First,
Block~\cite[\S 2]{Blo66} wrote down a version of~\eqref{lie_alg_cocycle} for a positive-characteristic analogue of
$\Witt$; then Gel'fand-Fuks~\cite{GF68} found cocycles for the Virasoro extensions in characteristic zero. The
Virasoro algebra extensions were then rediscovered in physics by Weis (see Brower-Thorn~\cite[\S 2]{BT71}).

Given a Lie algebra, it is natural to ask whether it can be exponentiated to a Lie group, and we are not sure who
was the first to ask this for the Virasoro algebras. The earliest reference we know of for a cocycle defining the
Virasoro group extension is Bott~\cite{Bot77}.
\end{rem}
\begin{rem}
Not everyone means the same thing by ``the Virasoro group(s).'' Some fix the normalization $\lambda = 1$. Others
define the Virasoro groups to be central extensions of $\Gamma$ by something different. For example, some authors
consider central extensions of $\Gamma$ by $\R$~\cite{Bot77, Lem95, Obl17}; others consider a simply connected
version, an extension of the universal cover of $\Gamma$ by $\R$~\cite{NS15}. Our interest in the Virasoro groups
is motivated by the projective $\Gamma$-symmetry in 2d conformal field theory, so we do not need to go any farther
than $\T$.
\end{rem}

\section{Background on differential cohomology}
\label{sec:diffcoh_background}
    The goal of this section is to set up our perspective on differential cohomology. For a more in-depth introduction
see~\cite{ADH21}.

Following Bunke-Nikolaus-Völkl~\cite{BNV16}, we think of differential cohomology in terms of sheaves of spectra on
the site $\Man$ of smooth manifolds. In this paper, we only need ordinary differential cohomology, which means we
just have to think about sheaves of chain complexes of abelian groups, or equivalently chain complexes of sheaves
of abelian groups.  Let $\Sh(\Man; \Ch)$ denote the category of chain complexes of sheaves of abelian groups. If
$M$ is a smooth manifold and $E\in\Sh(\Man; \Ch)$, then the $E$-cohomology of $M$, denoted $H^*(M; E)$, refers to
the hypercohomology of $M$ with coefficients in $E$.
\begin{defn}
Let $A$ be an abelian Lie group. We use $A$ to refer to the sheaf of abelian groups on $\Man$ whose value
on $M$ is the $A$-valued functions on $M$, where $A$ carries the discrete topology, and we use $\underline A$ to
denote the sheaf of $A$-valued functions where $A$ carries its usual topology.
\end{defn}
$\Omega^k$ denotes the sheaf of differential $k$-forms. Note $\underline\R\simeq\Omega^0$.
\begin{defn}
The \term{Deligne complex} $\Z(n)$~\cite[\S 2.2]{Del71} is defined as follows:
\begin{equation}
\label{deligne_defn}
	\Z(n)\coloneqq \Big(\!
	\begin{tikzcd}
		0 & \Z & {\Omega^0} & {\Omega^1} & \dotsb & {\Omega^{n-1}} & 0
		\arrow[from=1-1, to=1-2]
		\arrow[from=1-2, to=1-3]
		\arrow[from=1-3, to=1-4]
		\arrow[from=1-4, to=1-5]
		\arrow[from=1-5, to=1-6]
		\arrow[from=1-6, to=1-7]
	\end{tikzcd}\!
\Big)
\end{equation}
where the map $\Z\to\Omega^0$ realizes a $\Z$-valued function as an $\underline\R$-valued function, which is the
same thing as a $0$-form. The map $\Omega^k \to \Omega^{k+1}$ is given
by the exterior derivative.

We define $\R(n)$ analogously, with $\R$ (with the discrete topology) replacing $\Z$ in~\eqref{deligne_defn}. In particular, $\Z(0)$ and
$\R(0)$ are the sheaves $\Z$ and $\R$, respectively, whose cohomology is ordinary cohomology in $\Z$, resp.\ $\R$.
\end{defn}
\begin{exm}
We also use the complex
\begin{equation}
\label{T_deligne_defn}
	\T(n)\coloneqq \Big(\!
	\begin{tikzcd}
		0 & \underline\T & {\Omega^1} & \dotsb & {\Omega^n} & 0.
		\arrow["\varphi", from=1-2, to=1-3]
		\arrow[from=1-1, to=1-2]
		\arrow[from=1-3, to=1-4]
		\arrow[from=1-4, to=1-5]
		\arrow[from=1-5, to=1-6]
	\end{tikzcd}\!
\Big)
\end{equation}
where $\varphi$ is the map $(1/2\pi i)\ud\log$. 
Here, $\d\log:\T \to i\Omega^1$ maps a $\T$-valued function $f \in \T(M)$ to 
the differential form $\d \log (f) \coloneqq \tfrac{1}{f}\d f \in i\Omega^1(M)$, for all $M \in \Man$.

\end{exm}
\begin{lem}[{\cite[Remark 3.6]{BM94}}]
\label{lem:Tn_equiv_Zn}
There is an equivalence $\T(n)[-1]\simeq \Z(n+1)$.
\end{lem}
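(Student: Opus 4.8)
The plan is to exhibit an explicit quasi-isomorphism $f\colon \Z(n+1)\to\T(n)[-1]$ and check that it induces isomorphisms on all cohomology sheaves. First I would unravel the shift. The complex $\Z(n+1)$ is $\Z\to\Omega^0\to\dotsb\to\Omega^n$ supported in degrees $0,\dots,n+1$ (with $\Z$ in degree $0$), while $\T(n)[-1]$ is $\underline\T\to\Omega^1\to\dotsb\to\Omega^n$ with $\underline\T$ in degree $1$ and $\Omega^n$ in degree $n+1$. So both are supported in degrees $0$ through $n+1$ and agree termwise in degrees $2,\dots,n+1$, where both read $\Omega^1,\dots,\Omega^n$ with the de Rham differential. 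I would define $f$ to be the identity on each of these $\Omega^k$, to be the exponential $e\colon\Omega^0\to\underline\T$, $e(g)=\exp(2\pi i g)$, in degree $1$ (recall $\Omega^0\simeq\underline\R$), and to be $0$ on $\Z$ in degree $0$.

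Next I would verify that $f$ is a chain map. The only square that is not manifestly commutative relates the differential $\d\colon\Omega^0\to\Omega^1$ in $\Z(n+1)$ to $\varphi=\tfrac{1}{2\pi i}\d\log\colon\underline\T\to\Omega^1$ in $\T(n)[-1]$, so I must check $\varphi\circ e=\d$. For a smooth real function $g$ one computes $\d\log(\exp(2\pi i g))=\tfrac{1}{\exp(2\pi i g)}\d(\exp(2\pi i g))=2\pi i\ud g$, whence $\varphi(e(g))=\tfrac{1}{2\pi i}(2\pi i\ud g)=\d g$, as required. All other squares commute trivially since $f$ is the identity there.

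Finally I would show $f$ is a quasi-isomorphism by comparing cohomology sheaves $\mathcal H^\bullet$. In degrees $\geq 2$ both complexes equal the truncated de Rham complex and $f$ is the identity, so the Poincaré lemma (on stalks) gives $\mathcal H^i=0$ for $2\leq i\leq n$ on both sides, while at the top $\mathcal H^{n+1}=\Omega^n/\d\Omega^{n-1}$ for both, with $f$ the identity. The content is all in low degrees, where the exponential sequence $0\to\Z\to\underline\R\xrightarrow{e}\underline\T\to 0$ enters: $\mathcal H^0$ vanishes on both sides; for $\Z(n+1)$ one has $\mathcal H^1=\ker(\d\colon\Omega^0\to\Omega^1)/\Z$, the locally constant reals modulo integers, which is $\cong\T$, while for $\T(n)[-1]$ one has $\mathcal H^1=\ker\varphi$, the locally constant $\T$-valued functions, again $\cong\T$, and $f$ induces precisely the isomorphism $\R/\Z\cong\T$. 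For the $\mathcal H^2$ comparison I would note that on stalks $\operatorname{im}\varphi=\d\Omega^0$, since locally every $\T$-valued function is $\exp(2\pi i g)$, so both $\mathcal H^2$ vanish. Thus $f$ is an isomorphism on every cohomology sheaf and hence an equivalence. I do not expect a serious obstacle here, as this is the classical quasi-isomorphism underlying the Deligne complex; the only points requiring care are the degree/shift bookkeeping and the bottom-degree identifications $\ker\varphi\cong\T$ and $\operatorname{im}\varphi\cong\d\Omega^0$ coming from the exponential sequence.
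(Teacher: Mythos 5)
Your map is exactly the one the paper writes down (zero on $\Z$, $\exp(2\pi i\,\text{--})$ on $\Omega^0\simeq\underline\R$, identity on the higher form sheaves), and your verification that it is a chain map and a quasi-isomorphism via the exponential sequence and the Poincar\'e lemma is correct; the paper simply displays the map and asserts the equivalence without spelling out these checks. This is essentially the same proof.
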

\begin{proof}
The map of complexes
\begin{equation}
\begin{tikzcd}
	0 & \Z & {\underline{\R}} & {\Omega^1} & \dotsb & {\Omega^n} & 0 \\
	{} & 0 & {\underline{\T}} & {\Omega^1} & \dotsb & {\Omega^n} & 0
	\arrow[from=1-1, to=1-2]
	\arrow[from=1-2, to=2-2]
	\arrow[from=2-2, to=2-3]
	\arrow[from=1-2, to=1-3]
	\arrow["\exp{2\pi i}"', from=1-3, to=2-3]
	\arrow["\d", from=1-3, to=1-4]
	\arrow["\frac{1}{2\pi i}\d\log", from=2-3, to=2-4]
	\arrow[Rightarrow, no head, from=1-4, to=2-4]
	\arrow[from=1-4, to=1-5]
	\arrow[from=2-4, to=2-5]
	\arrow[from=1-5, to=1-6]
	\arrow[from=2-5, to=2-6]
	\arrow[Rightarrow, no head, from=1-6, to=2-6]
	\arrow[from=1-6, to=1-7]
	\arrow[from=2-6, to=2-7]
	\arrow[Rightarrow, no head, from=1-5, to=2-5]
\end{tikzcd}
\end{equation}
provides an explicit equivalence. Here we have already used the fact that $\Omega^0\simeq\underline\R$ to identify
the top row with $\Z(n+1)$.
\end{proof}
%(\TODO: do we need to define $\underline\T(n)$? Or $\Omega^*$? Or $\Omega^{\ge k}$ or $\Omega^{\le k}$? If we
%define $\R(\infty)$ we can quickly explain what the char class and curvature maps are for $H^n(M;\Z(n))$.)
\begin{rem}
Brylinski~\cite[Proposition 1.5.7]{Bry93} established a natural isomorphism between $H^n(M;\Z(n))$ and $\check H^n(M)$, the group of
Cheeger-Simons differential characters of $M$~\cite{CS85}.\footnote{As mentioned in Footnote~\ref{off_by_one}, we
use a different indexing convention than Cheeger-Simons.} So these ``diagonally-graded'' groups are the ordinary differential
cohomology groups of $M$. But through work of Beĭlinson~\cite{Bei84}, Brylinski~\cite{Bry99}, and others, it has become clear
that the ``off-diagonal'' groups $H^k(M;\Z(n))$, $k\ne n$, are also interesting. We will work primarily with
off-diagonal cohomology groups of the Deligne complexes.
\end{rem}
\begin{prop}[{Bunke-Nikolaus-Völkl~\cite[\S 4.1]{BNV16}, see also Hopkins-Singer~\cite[\S 3.2]{HS05}}]
Choose $0\le m<n$ and let $t\colon\Z(n)\to\Z(m)$ and $t\colon\R(n)\to\R(m)$ denote the truncation maps, which send
$\Omega^{i-1}$ in degree $i$ to $0$ for $i> m$ and do not change the terms in degrees $i\le m$. Then the commutative
square
\begin{equation}
\label{Znpullback}
\begin{tikzcd}
	{\Z(n)} & \Z(m) \\
	{\R(n)} & \R(m)
	\arrow["t", from=1-1, to=1-2]
	\arrow["t", from=2-1, to=2-2]
	\arrow[from=1-1, to=2-1]
	\arrow[from=1-2, to=2-2]
\end{tikzcd}
\end{equation}
is homotopy Cartesian.
\end{prop}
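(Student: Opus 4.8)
The plan is to invoke the standard criterion in the stable setting: a commutative square of chain complexes of sheaves is homotopy Cartesian exactly when the induced map on horizontal homotopy fibers is an equivalence (equivalently, the map on vertical cofibers is an equivalence). Since $\Sh(\Man;\Ch)$ is stable, it suffices to compute the two horizontal fibers together with the comparison map between them, and to check that this comparison is an equivalence.

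First I would compute the fibers of the two truncation maps $t$. Because $t$ preserves the terms in degrees $\le m$ and sends every higher term to $0$, it is a \emph{termwise-surjective} map of complexes, so the short exact sequence $0 \to K \to \Z(n) \xrightarrow{t} \Z(m) \to 0$ identifies its homotopy fiber with the ordinary kernel
\begin{equation}
	K \coloneqq \big(\Omega^m \to \Omega^{m+1} \to \dotsb \to \Omega^{n-1}\big),
\end{equation}
the subcomplex concentrated in degrees $m+1, \dotsc, n$. The crucial observation is that the fiber of $t\colon \R(n) \to \R(m)$ is the \emph{same} complex $K$: the truncated part involves only the sheaves $\Omega^\bullet$, never the degree-$0$ term, which is the sole place where the $\Z$- and $\R$-versions of the Deligne complex differ.

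The remaining step is to identify the comparison map on horizontal fibers, which is induced by the two vertical maps of the square. These are built from the inclusion $\Z \hookrightarrow \R$ in degree $0$ and are the identity on every $\Omega^\bullet$. Since $K$ lives entirely in degrees $> m \ge 0$, the induced map $K \to K$ on fibers is the identity, hence an equivalence; therefore the square is homotopy Cartesian.

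The only point demanding care — and the one I would flag as the (mild) main obstacle — is justifying that the homotopy fiber of each truncation agrees with the naive kernel. This rests on checking that $t$ is genuinely a termwise surjection, so that $0 \to K \to \Z(n) \to \Z(m) \to 0$ (and its $\R$-analogue) is an honest cofiber sequence rather than a quotient only up to homotopy; granting this, the comparison of fibers is immediate. Equivalently, one could argue with vertical cofibers: both $\mathrm{cofib}(\Z(n) \to \R(n))$ and $\mathrm{cofib}(\Z(m) \to \R(m))$ are $\R/\Z$ concentrated in degree $0$, and $t$ induces the identity between them, yielding the same conclusion.
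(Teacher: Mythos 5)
Your proof is correct. Note that the paper does not actually prove this proposition---it is quoted from Bunke--Nikolaus--V\"olkl and Hopkins--Singer---so there is no in-paper argument to compare against; your argument (identify the horizontal homotopy fibers with the honest kernels via the termwise surjectivity of $t$, observe both kernels are the complex $\Omega^m\to\dotsb\to\Omega^{n-1}$ in degrees $m+1,\dotsc,n$ since the $\Z$-versus-$\R$ discrepancy sits in degree $0\le m$, and note the comparison map is the identity) is the standard one, and the equivalent check on vertical cofibers, both being the discrete sheaf $\R/\Z$ in degree $0$, is also valid.
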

Letting $m = 0$, this gives us a square comparing differential cohomology to ordinary cohomology; this is the case
we use most often.

Differential cohomology comes with a fiber integration map: if $E\to B$ is a fiber bundle of manifolds with fiber
$S^1$ whose vertical tangent bundle is oriented, fiber integration is a map
\begin{equation}
	\int_{S^1}\colon H^k(E;\Z(n))\longrightarrow H^{k-1}(B;\Z(n-1)).
\end{equation}
Different constructions of this map have been given in~\cite{GT00, DL05, HS05, BKS10, Sch13, BB14, BNV16}. For this
paper, we only need to know one thing about this map: consider the diagram
\begin{equation}
\label{fib_int_comm}
\begin{tikzcd}
	{H^n(E; \Omega^k[-(k+1)])} & {H^n(E;\Z(k+1))} \\
	{H^{n-1}(B; \Omega^{k-1}[-k])} & {H^{n-1}(B;\Z(k))}
	\arrow["\varphi", from=1-1, to=1-2]
	\arrow["\varphi", from=2-1, to=2-2]
	\arrow["{\int_{S^1}}"', from=1-1, to=2-1]
	\arrow["{\int_{S^1}}"', from=1-2, to=2-2]
\end{tikzcd}
\end{equation}
given by comparing fiber integration with integration of differential forms. The horizontal maps come from a map
$\varphi\colon\Omega^k[-(k+1)]\to\Z(k+1)$, which is the fiber of the truncation map $\Z(k+1)\to\Z(k)$.
\begin{lem}
\label{fibcomm}
The diagram \eqref{fib_int_comm} commutes.
\end{lem}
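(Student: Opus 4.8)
The plan is to reduce the statement to a single cohomological degree and then verify it by unwinding the definitions at the cochain level. First I would observe that $\Omega^k$ and $\Omega^{k-1}$ are acyclic as sheaves on $\Man$ (they satisfy descent and have vanishing higher cohomology), so the hypercohomology of the one-term complexes $\Omega^k[-(k+1)]$ and $\Omega^{k-1}[-k]$ is concentrated in a single degree: $H^n(E;\Omega^k[-(k+1)])$ vanishes unless $n = k+1$, where it equals the space $\Omega^k(E)$ of global $k$-forms, and likewise $H^{n-1}(B;\Omega^{k-1}[-k])$ vanishes unless $n = k+1$, where it equals $\Omega^{k-1}(B)$. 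Hence every instance of \eqref{fib_int_comm} with $n \neq k+1$ commutes trivially, and it suffices to treat $n = k+1$, where the left-hand vertical map is the ordinary fiber integration $\int_{S^1}\colon \Omega^k(E) \to \Omega^{k-1}(B)$ of differential forms. Concretely, I must show that for a $k$-form $\omega$ on $E$ the identity $\int_{S^1}\varphi(\omega) = \varphi\bigl(\int_{S^1}\omega\bigr)$ holds in $H^k(B;\Z(k))$.

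For the computation I would make the identification of $\varphi$ explicit. Since $\varphi\colon\Omega^k[-(k+1)]\to\Z(k+1)$ is the fiber of the truncation $t\colon\Z(k+1)\to\Z(k)$, and this truncation is the levelwise-surjective map that forgets the top term $\Omega^k$ in degree $k+1$, the map $\varphi$ is simply the inclusion of $\Omega^k$ as the top component of $\Z(k+1)$; the same holds one degree down for $\Omega^{k-1}[-k]\to\Z(k)$. Thus $\varphi(\omega)$ is represented by the Deligne cocycle all of whose entries vanish except the top slot, which is $\omega$. I would then invoke one of the cochain-level constructions of fiber integration (e.g.\ those of Hopkins-Singer or Bunke-Nikolaus-Völkl), in which the form-valued components of a Deligne cocycle are fiber-integrated by the ordinary fiber integration of forms. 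Applying this to $\varphi(\omega)$ keeps all lower slots zero and sends the top slot to $\int_{S^1}\omega \in \Omega^{k-1}(B)$; the result is precisely the cocycle representing $\varphi\bigl(\int_{S^1}\omega\bigr)$. This establishes the identity, and hence the commutativity of \eqref{fib_int_comm}.

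The main obstacle is bookkeeping rather than ideas: the definition of fiber integration in differential cohomology is convention-sensitive, and the several constructions cited agree only up to specified homotopies, so one must choose a model whose cochain formula demonstrably restricts to the naive $\int_{S^1}$ on the top form component, with orientations and signs tracked correctly. A cleaner route that sidesteps explicit cochains is to use the fiber sequences $\Omega^k[-(k+1)]\xrightarrow{\varphi}\Z(k+1)\xrightarrow{t}\Z(k)$ and $\Omega^{k-1}[-k]\xrightarrow{\varphi}\Z(k)\xrightarrow{t}\Z(k-1)$: fiber integration is compatible with the truncation maps $t$ by construction, and a map of fiber sequences commuting on the $\Z(\bullet)$ terms automatically commutes on the fibers $\varphi$. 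I would present whichever of these is shortest given the precise construction of $\int_{S^1}$ adopted in the paper.
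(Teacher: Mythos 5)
Your proposal is correct and its core argument is the same one the paper gives: the commutativity holds because fiber integration on the differential-form part of a Deligne cocycle is, by construction (e.g.\ Hopkins--Singer \S 3.5), ordinary fiber integration of forms applied to the top slot, which is exactly where $\varphi$ lands. The degree reduction via acyclicity of $\Omega^k$ and the alternative fiber-sequence route are extra packaging around the same observation, which the paper states in a single sentence.
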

This is because fiber integration on the part of the Deligne complex coming from differential forms is defined in
terms of integration of differential forms (see, e.g., \cite[\S 3.5]{HS05}).

We will also need to work with stacks on $\Man$. By a \term{stack} we mean a simplicial sheaf
$\Man^\op\to\sSet$. If $X_\bullet$ is a simplicial Fréchet manifold, it defines a stack $\mathbf X$ on $\Man$: the
value of this stack on a test manifold $M$ is the simplicial set $\Map(M, X_\bullet)$ whose $n$-simplices are the
set $\Map(M, X_n)$; see~\cite[Example 5.5]{FH13}. We say that $\mathbf X$ \term{is presented by} the simplicial
manifold $X_\bullet$. If $E\in\Sh(\Man; \Ch)$ and $\mathbf X$ is a stack presented by $X_\bullet$, then we define
$H^*(\mathbf X; E)$ to be the hypercohomology of the triple complex associated to $E^*(X_\bullet)$; this does not
depend on the choice of presentation of $\mathbf X$.

\begin{exm}
\label{X/G}
We are principally interested in group quotients. Let $G$ be a (Fr\'echet) Lie group and $X$ be a manifold with a smooth
right $G$-action. Then the quotient stack $X/G$ %is a
%simplicial sheaf which
can be presented by the simplicial manifold
\begin{equation}
\label{XG_present}
\begin{tikzcd}[arrows={-Stealth}]
	X & 
	\ar[l,shift left=0.15em] \ar[l,shift right=0.15em]  X \times G&
	\ar[l,shift left=0.3em] \ar[l] \ar[l,shift right=0.3em] X\times G \times G &
	\ar[l,shift left=0.45em] \ar[l,shift left=0.15em]
	\ar[l,shift right=0.15em] \ar[l,shift right=0.45em] X\times G\times G \times G \dots
\end{tikzcd}
\end{equation}
The face maps are
\begin{equation}
	d_i(x, g_1, \dotsc, d_n) = \begin{cases}
		(x\cdot g_1, g_2, \dotsc, g_n) &i=0\\
		(x, g_1, \dotsc, g_ig_{i+1}, \dotsc, g_n), &0<i<n\\
		(x, g_1, \dotsc, g_{n-1}), &i = n.
	\end{cases}
\end{equation}
We will also need to take quotient stacks of left $G$-actions, producing a mirror-image diagram representing the
stacky quotient $G\backslash X$:
\begin{equation}
\begin{tikzcd}[arrows={-Stealth}]
	X & 
	\ar[l,shift left=0.15em] \ar[l,shift right=0.15em]  G \times X&
	\ar[l,shift left=0.3em] \ar[l] \ar[l,shift right=0.3em] G\times G \times X &
	\ar[l,shift left=0.45em] \ar[l,shift left=0.15em]
	\ar[l,shift right=0.15em] \ar[l,shift right=0.45em] G\times G\times G \times X \dots,
\end{tikzcd}
\end{equation}
whose face maps are
\begin{equation}
	d_i(g_1, \dotsc, d_n, x) = \begin{cases}
		(g_2, \dotsc, g_n, x) &i=0\\
		(g_1, \dotsc, g_ig_{i+1}, \dotsc, g_n, x), &0<i<n\\
		(g_1, \dotsc, g_{n-1}, g_n\cdot x), &i = n.
	\end{cases}
\end{equation}
\end{exm}
%\CW{We need to mention the above example also for a left G-action. And then
%we need to explain why for $\BdotG$, these are the same.}
%
\begin{exm}
\label{Bdot}
When $X = *$, the quotient stack $\BdotG \coloneqq */G$ is the classifying stack for principal
$G$-bundles.\footnote{If $BG$ is a model for the classifying space of $G$, then there is another stack on $\Man$
given by the sheaf $\mathrm{Map}(\bl, BG)$. This is not the same as $\BdotG$.} That is, its value on a test
manifold $U$ is the nerve of the groupoid of principal $G$-bundles on $U$.  Using~\eqref{XG_present}, $\BdotG$ has
the simplicial presentation
\begin{equation}
\begin{tikzcd}[arrows={-Stealth}]
	\point & 
	\ar[l,shift left=0.15em] \ar[l,shift right=0.15em]  G &
	\ar[l,shift left=0.3em] \ar[l] \ar[l,shift right=0.3em] G\times G &
	\ar[l,shift left=0.45em] \ar[l,shift left=0.15em]
	\ar[l,shift right=0.15em] \ar[l,shift right=0.45em] G\times G\times G \dots
\end{tikzcd}
\end{equation}
We could have just as well made $G$ act on $*$ from the left; we would obtain the same simplicial manifold: the
identity map on $n$-simplices defines an equivalence $G\backslash *\to */G$.
\end{exm}
\begin{exm}
\label{Edot}
Likewise, let $X = G$ with $G$-action given by right multiplication. 
The corresponding quotient stack is known as $\EdotG$; it is equivalent to $G/G \simeq \ast$.
$\EdotG$ is also the classifying stack of trivial
principal $G$-bundles; that is,
%\CW{I would prefer E.G to be a separate example, but from the Latex code it seems like including it here was a conscious decision. I will just put this footnote here for now and leave the decision up to you two.}
%\YL{Fine by me}
%the canonical right $G$-action. Note that this is a free and transitive action, therefore the quotient $\EdotG \coloneqq G/G \simeq *$. It is the classifying space of trivial principal $G$ bundles, that is,
its value on a test manifold $U$ is the nerve of the groupoid of trivialized
principal $G$-bundles on $U$. Forgetting the trivialization defines a map $\EdotG\to\BdotG$, and this map is the
universal principal $G$-bundle in the setting of stacks on $\Man$. Using~\eqref{XG_present}, $\EdotG$ has the
simplicial presentation
\begin{equation}
\begin{tikzcd}[arrows={-Stealth}]
	G & 
	\ar[l,shift left=0.15em] \ar[l,shift right=0.15em]  G \times G&
	\ar[l,shift left=0.3em] \ar[l] \ar[l,shift right=0.3em] G\times G \times G &
	\ar[l,shift left=0.45em] \ar[l,shift left=0.15em]
	\ar[l,shift right=0.15em] \ar[l,shift right=0.45em] G\times G\times G \times G \dots.
\end{tikzcd}
\end{equation}
\end{exm}
If $A$ is an abelian group considered with the discrete topology, then $H^*(\BdotG; A)\cong H^*(BG; A)$ more or
less by definition, and likewise $H^*(\EdotG; A)\cong H^*(EG; A) = A$ concentrated in degree $0$.
\begin{exm}
\label{Bnabla}
If $G$ is a finite-dimensional Lie group, there are analogues of $\BdotG$ and $\EdotG$ for principal $G$-bundles
with connection. $\BnablaG$ is the stack whose value on a test manifold $U$ is the nerve of the
groupoid of principal $G$-bundles with connection on $U$, and $\EnablaG$ is the stack whose value on $U$ is the
nerve of the groupoid of trivialized principal $G$-bundles with connection on $U$. Forgetting the trivialization
defines a map $\EnablaG\to\BnablaG$, which is the universal principal $G$-bundle with connection; in particular,
$\BnablaG$ is the quotient of $\EnablaG$ by a right $G$-action. There is an equivalence
$\EnablaG\simeq\Omega^1\otimes\fg$~\cite[(5.15)]{FH13}; under this equivalence, the $G$-action on $\EnablaG$ is by
gauge transformations:
\begin{equation}
    A \cdot g = g^{-1}Ag + g^{-1}\ud g.
\end{equation}
Freed-Hopkins~\cite[Proposition 5.24]{FH13} show that since $\BnablaG = \EnablaG/G \simeq (\Omega^1\otimes\fg)/G$,
$\BnablaG$ is equivalent to the simplicial object
\begin{equation}
\begin{tikzcd}[arrows={-Stealth}]
	\Omega^1\otimes\fg & 
	\ar[l,shift left=0.15em] \ar[l,shift right=0.14em]  (\Omega^1\otimes\fg)\times G &
	\ar[l,shift left=0.3em] \ar[l] \ar[l,shift right=0.3em] (\Omega^1\otimes\fg)\times G\times G &
	\ar[l,shift left=0.45em] \ar[l,shift left=0.15em]
	\ar[l,shift right=0.15em] \ar[l,shift right=0.45em] (\Omega^1\otimes\fg)\times G\times G\times G \dots
\end{tikzcd}
\end{equation}
\end{exm}
Because we are interested in extensions of $\Diff$, we allow $G$ to be an infinite-dimensional Fréchet Lie
group in \cref{Bdot}.
Brylinski~\cite[Proposition 1.6]{Bry00} showed that when $G$ is a Fréchet Lie group and $A$ is an abelian Lie
group, $H^2(\BdotG; \underline A)$ is naturally isomorphic to the group of equivalence classes of central
extensions of Fréchet Lie groups\footnote{There are several other notions of continuous or
smooth cohomology such that $H^2$ correctly classifies central extensions of topological or Lie groups, including
theories due to
Segal-Mitchison~\cite{Seg70, Seg75}, Wigner~\cite{Wig73}, Moore~\cite{Moo76}, Flach~\cite{Fla08},
Fuchssteiner-Wockel~\cite{FW12},
Khedekar-Rajan~\cite{KR12}, and Wagemann-Wockel~\cite{WW15}. Wagemann-Wockel (\textit{ibid.}, Theorem IV.5) provide
a general isomorphism theorem identifying most of these cohomology theories. Contrast this with the notion of globally continuous or smooth cohomology discussed for example in Stasheff~\cite{Sta78}.}
\begin{equation}
    \begin{tikzcd}
	0 & A & {\widetilde G} & G & 0.
	\arrow[from=1-1, to=1-2]
	\arrow[from=1-2, to=1-3]
	\arrow[from=1-3, to=1-4]
	\arrow[from=1-4, to=1-5]
\end{tikzcd}
\end{equation}
The Virasoro group is a central extension of $\Diff$ by $\T$, so we will be interested in $\underline{\T}$-cohomology. 
\begin{lem}
\label{TExtZ1}
For $G$ a Fréchet Lie group, equivalence classes of Fréchet Lie group central extensions by $\T$ are naturally
identified with $H^3(\BdotG; \Z(1))$.
\end{lem}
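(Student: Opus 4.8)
The plan is to reduce the statement to Brylinski's classification of central extensions recalled just above: for a Fréchet Lie group $G$ and an abelian Lie group $A$, the group of equivalence classes of Fréchet Lie group central extensions of $G$ by $A$ is naturally isomorphic to $H^2(\BdotG; \underline A)$. Taking $A = \T$, it therefore suffices to produce a natural isomorphism $H^3(\BdotG; \Z(1)) \cong H^2(\BdotG; \underline\T)$, after which we compose the two identifications.

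This second isomorphism comes from a single comparison of coefficient complexes. By \cref{lem:Tn_equiv_Zn} at $n = 0$ we have $\T(0)[-1] \simeq \Z(1)$; since $\T(0)$ is just the sheaf $\underline\T$ concentrated in degree $0$, this reads $\underline\T[-1] \simeq \Z(1)$. Passing to hypercohomology of $\BdotG$ turns this equivalence of complexes of sheaves into a natural isomorphism
\begin{equation}
    H^3(\BdotG; \Z(1)) \cong H^3(\BdotG; \underline\T[-1]) \cong H^2(\BdotG; \underline\T),
\end{equation}
where the first isomorphism is \cref{lem:Tn_equiv_Zn} and the second is the degree shift. Composing with Brylinski's isomorphism gives the claimed identification of $H^3(\BdotG;\Z(1))$ with $\CExt_\T(G)$, and naturality is inherited from the naturality of each constituent step.

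There is essentially no obstacle here beyond bookkeeping: the mathematical content is already contained in Brylinski's theorem (which we take as an input) and in \cref{lem:Tn_equiv_Zn} (already proved). The only points requiring genuine care are the degree conventions. One must check that $\Z(1)$ really is the two-term complex with $\Z$ in degree $0$ and $\Omega^0 \simeq \underline\R$ in degree $1$, so that its cohomology sheaves are $0$ and $\underline\R/\Z \simeq \underline\T$, giving $\Z(1) \simeq \underline\T[-1]$; and one must verify that the shift raises cohomological degree by one, so that it is precisely $H^3(\BdotG;\Z(1))$ — and not a neighbouring degree — that matches $H^2(\BdotG;\underline\T)$. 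Finally, one should observe that the hypercohomology of the stack $\BdotG$ with these sheaf coefficients is well-defined for infinite-dimensional Fréchet $G$ via the triple complex associated to its simplicial presentation, so that both sides of the isomorphism are computed by the same machinery and the comparison is legitimate.
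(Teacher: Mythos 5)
Your proof is correct and matches the paper's own argument: both apply \cref{lem:Tn_equiv_Zn} at $n=0$ to get $\underline\T[-1]\simeq\Z(1)$ and then invoke Brylinski's identification of $H^2(\BdotG;\underline\T)$ with $\CExt_\T(G)$. The extra care you take with the degree shift and the cohomology sheaves of $\Z(1)$ is sound but not needed beyond what the paper records.
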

\begin{proof}
Apply \cref{lem:Tn_equiv_Zn} for $n = 0$, producing an equivalence $\underline{\T}[-1]\simeq\Z(1)$, hence an
equivalence $H^2(\BdotG;\underline{\T})\cong H^3(\BdotG;\Z(1))$.
\end{proof}
\begin{rem}
As spaces, Fréchet Lie group central extensions by $\T$ must be principal $\T$-bundles. Forgetting the central
extension and just remembering the principal $\T$-bundle over $G$ defines a homomorphism $H^3(\BdotG;\Z(1))\to
H^1(G;\underline{\T}) \overset\cong\to H^2(G;\Z)$. This map can also be described as follows: first apply the
truncation $\Z(1)\to\Z$ to land in $H^3(BG;\Z) \cong [BG, K(\Z, 3)]$; then take the loop space functor to land in
$[\Omega BG, \Omega K(\Z, 3)] \cong [G, K(\Z, 2)] \cong H^2(G;\Z)$.
\end{rem}
\begin{rem}
There is a pleasing interpretation of $H^4(\text{--};\Z(n))$ in terms of bundle gerbes with
various notions of connection due to Brylinski-McLaughlin~\cite{BM94}, Brylinski~\cite{Bry94, Bry99a},
Murray-Stevenson~\cite{MS00}, and Waldorf~\cite{Wal10}.%\YL{To be honest I don't really understand why this is relavant}
%\CW{I removed a "for n small" here: 
%    these interpretations exist for $n \leq 4$ I think, and for $n>4$, these should be isomorphic to the case $n=4$ (a degree 4 cocycle shouldn't have anything in "sheaf degree" > 4).
%    Please add it back in if you disagree/find a flaw in my logic :P
%}
\end{rem}
%
%
%
%
%(\TODO: We need to discuss $H^*(BG;Z)$ classically are characteristics classes, and then their differential refinements $H^*(\BdotG, \Z(n))$, how they give rise to secondary differential invariants (like Chern-Simons theory) and stuff).
%
%%(\TODO: the off diagonal coho class $H^{2p}(\BdotG, \R(p)) \simeq Sym^{p}(\mathfrak{g}^*)^G$.)

\section{Off-diagonal differential characteristic classes}
\label{sec:off_diagonal}
    The goal of this section is to study lifts of characteristic classes to the off-diagonal differential cohomology
groups $H^{2n}(\BdotG; \Z(n))$. We will show that when $G$ is compact, the map $t\colon H^{2n}(\BdotG;\Z(n))\to
H^{2n}(BG;\Z)$ is an isomorphism (\cref{compact_off_diag}), and that in general (\cref{CW_pull_cor}) there is a
pullback square
\begin{equation}
   \begin{tikzcd}
	{H^{2n}(\BdotG; \Z(n))} & {H^{2n}(BG;\Z)} \\
	{\Sym^*(\fg^\vee)^G} & {\Sym^n(\fk^\vee)^K,}
	\arrow[from=1-2, to=2-2]
	\arrow["t", from=1-1, to=1-2]
	\arrow["j^*", from=2-1, to=2-2]
	\arrow[from=1-1, to=2-1]
\end{tikzcd}
\end{equation}
where $K$ is a maximal compact in $G$, $\fk$ is the Lie algebra of $K$, and $j^*$ is the pullback on functions
induced by the inclusion $j: \fk \hookrightarrow \fg$. We then use this to characterize differential lifts of
$p_1\in H^4(B\GL_n^+(\R);\Z)$.

First, though, we review Chern-Weil theory from a perspective that will be convenient when we study off-diagonal
characteristic classes. Let $G$ be a Lie group with $\pi_0(G)$ finite, and let $\fg$ be the Lie algebra of $G$.
\begin{defn}
We let $\Inv(G)$ denote the algebra $\Sym^*(\fg^\vee)^G$ of $G$-invariant polynomials on $\fg$, where $G$ acts by
the adjoint action.
\end{defn}
\begin{defn}[\cite{Car49, Che52}]
Let $M$ be a manifold and $P\to M$ be a principal $G$-bundle with connection $\Theta\in\Omega_P^1(\fg)$; let
$\Omega\in\Omega_P^2(\fg)$ be the curvature of $\Theta$. The \term{Chern-Weil map}
\begin{equation}
	\ChW\colon \Inv(G)\longrightarrow \Omega^*(M)
\end{equation}
is defined as follows: given $f\in\Inv[k](G)$, we can evaluate $f$ on $\Omega^{\wedge
k}\in\Omega_P^{2k}(\fg^{\otimes k})$, giving an element $f(\Omega^{\wedge k})\in\Omega_P^{2k}(\R)$; because $f$ is
$\mathrm{Ad}$-invariant, $f(\Omega^{\wedge k})$ descends to a form on $M$, and we define $\ChW(f)\in\Omega^{2k}(M)$
to be that form.
\end{defn}
This map is natural in $G$ and in $(P, \Theta)$. Since $\Omega$ is a $2$-form, this map doubles the grading.
\begin{thm}[Chern~\cite{Che52}, Weil~\cite{Wei49}]
$\ChW(f)$ is always a closed form, and its de Rham class does not depend on $\Theta$. $\ChW$ passes to an algebra homomorphism $\ChW\colon \Inv(G)\to H^*(BG;\R)$.
When $G$ is compact, this map is an isomorphism.
\end{thm}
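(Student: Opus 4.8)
The plan is to establish the four assertions in turn---closedness of $\ChW(f)$, connection-independence of its de Rham class, multiplicativity, and the isomorphism for compact $G$---using only standard Chern--Weil manipulations, with the structure theory of compact groups entering only at the final step.

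First I would prove closedness on the total space $P$ and then descend. Polarize $f \in \Inv[k](G)$ to its symmetric $k$-linear form $\tilde f$ on $\fg$; the $\mathrm{Ad}$-invariance of $f$ is equivalent to the infinitesimal identity $\sum_{i=1}^{k} \tilde f(X_1, \dots, [\xi, X_i], \dots, X_k) = 0$ for all $\xi, X_1, \dots, X_k \in \fg$. Differentiating $\tilde f(\Omega, \dots, \Omega)$ and substituting the Bianchi identity $\d\Omega + [\Theta, \Omega] = 0$ into each slot, the $\Theta$-contributions assemble into precisely the combination annihilated by invariance (with $\xi$ playing the role of $\Theta$), so $\d\, \tilde f(\Omega, \dots, \Omega) = 0$ on $P$. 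Since this form is already horizontal and invariant it descends to the closed form $\ChW(f) \in \Omega^{2k}(M)$.

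For connection-independence I would run the usual transgression argument: given two connections $\Theta_0, \Theta_1$ on $P \to M$, their convex combinations $\Theta_t = (1-t)\Theta_0 + t\Theta_1$ define a connection on $P \times [0,1] \to M \times [0,1]$ whose Chern--Weil form is closed by the previous step and restricts to $\ChW(f, \Theta_i)$ at the endpoints. Integrating over $[0,1]$ yields a transgression (Chern--Simons) form $Tf$ with $\d (Tf) = \ChW(f, \Theta_1) - \ChW(f, \Theta_0)$, so the classes agree. Multiplicativity is immediate at the form level, since evaluating a product of invariant polynomials on the curvature is the wedge of the separate evaluations, $\ChW(fg) = \ChW(f) \wedge \ChW(g)$, and this passes to cohomology. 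Applying all of this naturally to the universal connection on $\EnablaG \to \BnablaG$, and using that forgetting the connection induces an isomorphism $H^*(\BdotG; \R) \to H^*(\BnablaG; \R)$ (the space of connections on a fixed bundle being affine, hence contractible), produces the well-defined algebra homomorphism $\ChW \colon \Inv(G) \to H^*(\BdotG; \R) = H^*(BG; \R)$.

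The main obstacle is the final isomorphism claim for compact $G$, which genuinely uses structure theory rather than formal manipulation. I would reduce to a maximal torus $T \subset G$ with Weyl group $W$ and Lie algebra $\mathfrak{t}$. For a torus, $H^*(BT; \R)$ is the polynomial algebra on $H^2(BT; \R) \cong \mathfrak{t}^\vee$, and here Chern--Weil is visibly the identification sending a linear functional to the first Chern class of the associated line bundle, so $\ChW \colon \Sym^*(\mathfrak{t}^\vee) \to H^*(BT; \R)$ is an isomorphism. I would then combine two classical inputs, each compatible with $\ChW$ under restriction along $T \hookrightarrow G$: Chevalley's restriction theorem, giving $\Inv(G) = \Sym^*(\fg^\vee)^G \xrightarrow{\sim} \Sym^*(\mathfrak{t}^\vee)^W$, and Borel's theorem, giving $H^*(BG; \R) \xrightarrow{\sim} H^*(BT; \R)^W$. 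Since the torus-level map is a $W$-equivariant isomorphism, passing to $W$-invariants identifies $\ChW$ for $G$ with the restriction of $\ChW$ for $T$, which yields the claimed isomorphism.
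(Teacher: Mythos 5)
The paper does not supply a proof of this statement: it is quoted as a classical theorem with references to Chern and Weil, so there is no internal argument to compare yours against. Your proof is the standard one and is correct in outline. Polarization together with the Bianchi identity $\d\Omega+[\Theta,\Omega]=0$ and the infinitesimal form of $\mathrm{Ad}$-invariance gives closedness on $P$; horizontality and invariance of $\tilde f(\Omega,\dotsc,\Omega)$ give descent to $M$; the transgression form over $M\times[0,1]$ gives independence of the connection; multiplicativity is immediate at the level of forms; and the compact case follows from reduction to a maximal torus combined with Chevalley's restriction theorem and Borel's identification $H^*(BG;\R)\cong H^*(BT;\R)^W$. Two small points deserve mention. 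First, your passage to $BG$ via the claim that $\BnablaG\to\BdotG$ induces an isomorphism on real cohomology (affine, hence contractible, fibers) is the stacky route taken elsewhere in this paper following \cite{FH13}; the classical route is via finite-dimensional approximations of $EG\to BG$ equipped with universal connections, and either is acceptable here. Second, the maximal-torus argument as written proves the isomorphism only for compact \emph{connected} $G$; for disconnected compact $G$ one reduces to the identity component $G_0$ and takes invariants under the finite group $\pi_0(G)$ acting on both $\Inv(G_0)$ and $H^*(BG_0;\R)$. This is harmless but should be stated, since the section's standing hypothesis is only that $\pi_0(G)$ is finite.
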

The Chern-Weil map refines to on-diagonal differential cohomology:
\begin{thm}[{Cheeger-Simons~\cite[Theorem 2.2]{CS85}}]
\label{on_diagonal}
For $G$ a compact Lie group and $c\in H^{2n}(BG;\Z)$, there is a unique lift of $c$ to a class $\check c\in
H^{2n}(\BnablaG;\Z(2n))$ whose curvature form is the Chern-Weil form associated to the image of $c$ in $\R$-valued
cohomology.
\end{thm}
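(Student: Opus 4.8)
The plan is to deduce both existence and uniqueness from the two fundamental exact sequences of ordinary differential cohomology, applied to the stack $\BnablaG$, with the decisive input being the vanishing of the odd real cohomology of $BG$. Write $\omega \coloneqq \ChW(f)\in\Omega^{2n}(\BnablaG)$ for the Chern--Weil form, where $f\in\Inv[n](G)$ is the invariant polynomial corresponding under the Chern--Weil isomorphism to the image $c_\R\in H^{2n}(BG;\R)$ of $c$; this $\omega$ is closed and de Rham cohomologous to $c_\R$, and it is the prescribed curvature. The truncation $t\colon\Z(2n)\to\Z$ furnishes the characteristic-class map $I = t\colon H^{2n}(\BnablaG;\Z(2n))\to H^{2n}(\BnablaG;\Z)$, while the chain map $\Z(2n)\to\Omega^{2n}[-2n]$ given by $\d$ on the top term $\Omega^{2n-1}$ induces the curvature $R\colon H^{2n}(\BnablaG;\Z(2n))\to\Omega^{2n}(\BnablaG)$, whose image lies in closed forms. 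These fit into the standard short exact sequences
\begin{equation}
0 \to \frac{\Omega^{2n-1}(\BnablaG)}{\Omega^{2n-1}_{cl,\Z}(\BnablaG)} \xrightarrow{\ a\ } H^{2n}(\BnablaG;\Z(2n)) \xrightarrow{\ I\ } H^{2n}(\BnablaG;\Z)\to 0
\end{equation}
and
\begin{equation}
0 \to H^{2n-1}(\BnablaG;\R/\Z) \to H^{2n}(\BnablaG;\Z(2n)) \xrightarrow{\ R\ } \Omega^{2n}_{cl,\Z}(\BnablaG)\to 0,
\end{equation}
valid for stacks in our sheaf-of-complexes framework and extractable from the homotopy Cartesian square \eqref{Znpullback} with $m=0$. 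The map $a$ satisfies $I\circ a = 0$ and $R(a([\eta])) = \d\eta$.

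For existence I would lift and then correct the curvature. Surjectivity of $I$ in the first sequence produces some $\check c_0$ with $I(\check c_0) = c$. Its curvature $R(\check c_0)$ is closed with $[R(\check c_0)] = c_\R = [\omega]$, so $R(\check c_0) - \omega = \d\eta$ for some $\eta\in\Omega^{2n-1}(\BnablaG)$. Setting $\check c \coloneqq \check c_0 - a([\eta])$ preserves $I(\check c) = c$ while adjusting the curvature to $R(\check c) = R(\check c_0) - \d\eta = \omega$, yielding a class with the required characteristic class and curvature.

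For uniqueness, any two such lifts differ by a class $\delta$ with $I(\delta) = 0$ and $R(\delta) = 0$. By the second sequence $\delta$ is flat, i.e.\ arises from $H^{2n-1}(\BnablaG;\R/\Z)$; on flat classes $I$ is the Bockstein $H^{2n-1}(\BnablaG;\R/\Z)\to H^{2n}(\BnablaG;\Z)$, whose kernel is the image of $H^{2n-1}(\BnablaG;\R)$. Hence $\delta$ comes from $H^{2n-1}(\BnablaG;\R)$, and the whole argument turns on the vanishing of this group. Since $\R$ carries the discrete topology, the connection data is immaterial and $H^*(\BnablaG;\R)\cong H^*(BG;\R)$; for compact $G$ this is $\Sym^*(\fg^\vee)^G$, concentrated in even degrees, so $H^{2n-1}(\BnablaG;\R) = 0$ and therefore $\delta = 0$.

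I expect the main obstacle to be purely organizational rather than computational: establishing that the curvature map $R$, the form map $a$, and the flat-class identification behave on the stack $\BnablaG$ exactly as in the classical Cheeger--Simons setting over manifolds, and justifying cleanly the isomorphism $H^{2n-1}(\BnablaG;\R)\cong H^{2n-1}(BG;\R)$. Once these are in place, the even-degree concentration of $\Sym^*(\fg^\vee)^G$ is precisely what forces uniqueness, and existence is the routine lift-and-correct step above.
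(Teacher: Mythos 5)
This statement is a cited result (Cheeger--Simons \cite[Theorem 2.2]{CS85}, lifted to the stack $\BnablaG$ by Bunke--Nikolaus--V\"olkl), and the paper offers no proof of it, so there is nothing internal to compare against; I can only assess your argument on its own terms. What you propose is essentially the classical Cheeger--Simons argument: existence by lifting the integral class and correcting the curvature, uniqueness by showing the difference of two lifts is flat and then killed by the vanishing of $H^{2n-1}(BG;\R)$ for compact $G$. That is the right skeleton, and the decisive input you identify --- odd real cohomology of $BG$ vanishes --- is indeed what makes the theorem true.

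The one place where your proof has real content left to supply is exactly the step you label ``organizational'': the two short exact sequences are standard for manifolds, but for the stack $\BnablaG$ they must be re-derived, and this is not a formality --- it is the substance of the Bunke--Nikolaus--V\"olkl lift that the paper cites. In the paper's framework the cleanest route is the Mayer--Vietoris sequence of the homotopy pullback square \eqref{Znpullback} with $m=0$, which (once one knows $H^{2n-1}(\BnablaG;\R)\cong H^{2n-1}(BG;\R)=0$) exhibits $H^{2n}(\BnablaG;\Z(2n))$ as the fiber product of $H^{2n}(BG;\Z)$ and $H^{2n}(\BnablaG;\R(2n))$ over $H^{2n}(BG;\R)$; combined with the Freed--Hopkins identification of the curvature map $H^{2n}(\BnablaG;\R(2n))\to\Omega^{2n}(\BnablaG)\cong\Inv[n](G)$, this gives existence and uniqueness in one stroke, mirroring how the paper handles the off-diagonal analogue in \cref{2n_n_pullback,CW_pull_cor}. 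A small remark on your existence step: since $\Omega^{2n-1}(\BnablaG)=0$ (odd-degree forms on $\BnablaG$ vanish by Freed--Hopkins), your correction term $a([\eta])$ is necessarily zero, so the argument degenerates harmlessly --- any lift of $c$ automatically has curvature equal to the Chern--Weil form once one knows the curvature map is compatible with the real characteristic class. Neither point is an error, but both should be made explicit before the proof can be called complete.
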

\begin{rem}
Cheeger-Simons did not work with the universal object $\BnablaG$, but instead with approximations of it.
Bunke-Nikolaus-Völkl~\cite[\S 5.2]{BNV16} showed Cheeger-Simons' construction lifts to $\BnablaG$.
Freed-Hopkins~\cite{FH13} interpret the Chern-Weil map as providing an isomorphism
$\Inv(G)\overset\simeq\to\Omega^*(\BnablaG)$.
\end{rem}
Beginning with work of Beĭlinson~\cite[\S 1.7]{Bei84}, there is a parallel and different story lifting $c$ to
$\tilde c\in H^{2n}(\BdotG;\Z(n))$, not requiring connections, and landing in off-diagonal differential cohomology
groups.

In this section we will study the truncation map $t\colon H^{2n}(\BdotG;\Z(n))\to H^{2n}(\BdotG;\Z) =
H^{2n}(BG;\Z)$. When $G$ is compact, this map is an isomorphism (\cref{compact_off_diag}). We are most interested
in $G = \GL_n^+(\R)$, which is not compact, but we will be able to characterize the preimages of the first Pontryagin
class $p_1\in H^4(B\GL_n^+(\R);\Z)$ in \cref{GLn_off_diag}.
\begin{rem}
Work on characteristic classes in off-diagonal Deligne cohomology began in the setting of algebraic or complex
geometry; this includes Beĭlinson's original work~\cite[\S 1.7]{Bei84} as well as work of Bloch~\cite{Blo78},
Soulé~\cite{Sou89}, Brylinski~\cite{Bry99, Bry99a}, and Dupont-Hain-Zucker~\cite{DHZ00} studying relationships
between off-diagonal and on-diagonal differential lifts of Chern classes. In the smooth setting, work began with
Bott's calculation of $H^*(\BdotG;\Omega^q)$~\cite{Bot73}, later reinterpreted in off-diagonal differential
cohomology by Waldorf~\cite{Wal10} and in~\cite[Chapters 15--17]{ADH21}. See also work of Shulman~\cite{Shu72} and
Bott-Shulman-Stasheff~\cite{BSS76} studying $H^*(\BdotG;\Omega^{\ge q})$.
\end{rem}
\begin{lem}[{\cite[\S 16.1]{ADH21}}] \label{lem:idk}
\label{2n_n_pullback}
Let $G$ be a finite-dimensional Lie group with $\pi_0(G)$ finite. Then there is a pullback square
\begin{equation}
\label{2n_n_eqn}
   \begin{tikzcd}
	{H^{2n}(\BdotG; \Z(n))} & {H^{2n}(BG;\Z)} \\
	{H^{2n}(\BdotG; \R(n))} & {H^{2n}(BG;\R),}
	\arrow[from=1-2, to=2-2]
	\arrow["t", from=1-1, to=1-2]
	\arrow["t", from=2-1, to=2-2]
	\arrow[from=1-1, to=2-1]
\end{tikzcd}
\end{equation}
where the horizontal maps are induced from the maps in~\eqref{Znpullback}, where $m = 0$, and we implicitly use the
identification $H^*(\BdotG; A) \simeq H^*(BG; A)$.
\end{lem}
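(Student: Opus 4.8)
The plan is to establish the pullback square \eqref{2n_n_eqn} by combining the homotopy Cartesian square \eqref{Znpullback} (with $m=0$) with the vanishing of the off-diagonal term $H^{2n}(BG;\R)$ relative to $H^{2n}(\BdotG;\R(n))$ in the appropriate sense. First I would apply the functor $H^*(\BdotG;-)$ to the homotopy Cartesian square of sheaves of chain complexes
\begin{equation}
\begin{tikzcd}
	{\Z(n)} & \Z \\
	{\R(n)} & \R,
	\arrow["t", from=1-1, to=1-2]
	\arrow["t", from=2-1, to=2-2]
	\arrow[from=1-1, to=2-1]
	\arrow[from=1-2, to=2-2]
\end{tikzcd}
\end{equation}
using the identifications $H^*(\BdotG;\Z)\cong H^*(BG;\Z)$ and $H^*(\BdotG;\R)\cong H^*(BG;\R)$ from the discussion after \cref{Bdot}. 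Since hypercohomology takes homotopy Cartesian squares of coefficients to homotopy Cartesian (i.e. long-exact-sequence-compatible) squares of cohomology groups, this produces a commuting square of abelian groups in each degree, together with a Mayer--Vietoris-type long exact sequence relating the four corners.

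The heart of the argument is then to upgrade the commuting square in degree $2n$ to an honest pullback square. This requires showing that the comparison map from $H^{2n}(\BdotG;\Z(n))$ to the fiber product of $H^{2n}(BG;\Z)$ and $H^{2n}(\BdotG;\R(n))$ over $H^{2n}(BG;\R)$ is an isomorphism. By the Mayer--Vietoris sequence associated to the homotopy pullback of coefficients, the obstruction to this is the connecting map into the next degree, and the key input is that the truncation map $t\colon\R(n)\to\R$ becomes highly connected on $\BdotG$ in the relevant range. Concretely, I would analyze the fiber of $t\colon\R(n)\to\R$, which is the complex $(\Omega^0\to\Omega^1\to\dots\to\Omega^{n-1})[-1]$ with its differential, and argue using the de Rham resolution that its contribution to $H^{2n}(\BdotG;-)$ vanishes (or matches up exactly) so that the square is Cartesian precisely in degree $2n$.

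The main obstacle I anticipate is controlling the $\R(n)$-cohomology of $\BdotG$ in the off-diagonal range, i.e. verifying the relevant exactness in degree $2n$ and the absence of an extra contribution from degree $2n+1$. This is where the finiteness hypothesis on $\pi_0(G)$ and the finite-dimensionality of $G$ enter: they guarantee that $H^*(BG;\R)$ is computed by $\Inv(G)$ via Chern--Weil and is concentrated in even degrees, which kills the potential obstruction classes coming from the odd-degree portion of the fiber complex. Assuming this vanishing, the square \eqref{Znpullback} with $m=0$ pulls back verbatim to \eqref{2n_n_eqn}, and naturality of all the maps involved (truncation, the inclusion $\Z\hookrightarrow\R$, and the identifications with $H^*(BG;-)$) ensures the resulting square commutes as claimed. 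I would cite \cite[\S 16.1]{ADH21} for the detailed spectral-sequence or explicit cochain verification of this vanishing rather than reproduce it here.
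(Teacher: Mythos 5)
Your overall strategy is the same as the paper's: apply $H^*(\BdotG;-)$ to the homotopy Cartesian square \eqref{Znpullback} with $m=0$, extract the resulting Mayer--Vietoris long exact sequence, and kill the obstruction to the degree-$2n$ square being a pullback by using that $H^*(BG;\R)$ is concentrated in even degrees (retract $G$ onto its maximal compact and invoke Chern--Weil). That is exactly the paper's proof: since $H^{2n-1}(BG;\R)=0$, the connecting map $H^{2n-1}(BG;\R)\to H^{2n}(\BdotG;\Z(n))$ vanishes and the Mayer--Vietoris sequence breaks into pullback squares of the form \eqref{2n_n_eqn}.

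Your middle paragraph, however, goes astray in two ways. First, the obstruction to the square being a pullback is the connecting map \emph{out of degree $2n-1$ of the lower-right corner}, $\partial\colon H^{2n-1}(BG;\R)\to H^{2n}(\BdotG;\Z(n))$, not a map ``into the next degree''; the map $H^{2n}(BG;\Z)\oplus H^{2n}(\BdotG;\R(n))\to H^{2n}(BG;\R)$ need not be surjective and its failure to be so is irrelevant to the pullback property. Second, and more seriously, the proposed step of showing that the fiber of $t\colon\R(n)\to\R$ ``contributes nothing to $H^{2n}(\BdotG;-)$'' would fail: by Bott's theorem (\cref{botts_theorem}) the fiber $(\Omega^0\to\dots\to\Omega^{n-1})[-1]$ has nonvanishing cohomology on $\BdotG$ in degree $2n$ in general, and this nonvanishing is precisely the source of the affine line of off-diagonal lifts of $p_1$ for $\GL_n^+(\R)$ (\cref{GLn_off_diag}); if that contribution vanished, the truncation map would be an isomorphism for every $G$ and the paper would have nothing to construct. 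The only vanishing actually needed --- and the one your final paragraph correctly identifies --- is $H^{2n-1}(BG;\R)=0$; with that single input the argument closes as in the paper.
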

\begin{proof}
The pullback square~\eqref{Znpullback} with $m = 0$ induces a Mayer-Vietoris sequence for the cohomology of $\BdotG$
with coefficients in $\Z$, $\R$, $\Z(n)$, and $\R(n)$. For any finite-dimensional Lie group $G$ with $\pi_0(G)$
finite, $H^{2n-1}(BG;\R) = 0$: retract $G$ onto its maximal compact; then, for compact Lie groups, the Chern-Weil
map $\Inv(G)\to H^*(BG;\R)$ is an isomorphism, and its image vanishes in odd degrees. Since $H^{2n-1}(BG;\R)
= 0$, the Mayer-Vietoris sequence simplifies into pullback squares of the form~\eqref{2n_n_eqn}.
\end{proof}

\Cref{lem:idk} tells us that if we want to lift characteristic classes from $H^{2n}(BG;\Z)$ to
$H^{2n}(\BdotG;\Z(n))$, it suffices to understand the map  $H^{2n}(\BdotG; \R(n)) \rightarrow H^{2n}(\BdotG; \R)$.
% This map is the composite of a whole tower of maps
% \begin{equation}
%     H^{2n}(\BdotG; \R(n)) \to H^{2n}(\BdotG; \R(n-1)) \to \dotsb\to H^{2n}(\BdotG; \R(0)) = H^{2n}(BG;\R).
% \end{equation}
Note that we have a fiber sequence of sheaves
\begin{equation}
    \Omega^{k}[-(k+1)] \longrightarrow \R(k+1) \overset{t}{\longrightarrow} \R(k);
\end{equation}
it will suffice to understand $H^{i}(\BdotG; \Omega^j)$. Bott computes this in~\cite{Bot73}:
\begin{thm}[Bott~\cite{Bot73}]
\label{botts_theorem}
There is a natural ring isomorphism
\begin{equation}
    H^p(\BdotG; \Omega^q) \cong H^{p-q} _{\mathit{sm}} (G; \Sym^q(\fg^\vee)),
\end{equation}
where $H^*_{\mathit{sm}}(G;\Sym^q(\fg^\vee))$ is the \term{smooth cohomology} of $G$~\cite{HM62}.
\end{thm}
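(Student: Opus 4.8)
The plan is to compute the hypercohomology directly from the bar presentation of $\BdotG$ (\cref{Bdot}), whose manifold of $p$-simplices is $G^p$. For a sheaf $E$, the hypercohomology of a simplicial manifold is governed by the spectral sequence $E_1^{p,t} = H^t(G^p; E) \Rightarrow H^{p+t}(\BdotG; E)$. First I would note that $\Omega^q$ is a fine sheaf: it is a module over the soft sheaf $C^\infty$ of smooth functions, which admits partitions of unity, so $H^t(M;\Omega^q) = 0$ for every $t>0$ and every manifold $M$. The spectral sequence therefore collapses onto its bottom row, giving
\begin{equation}
    H^p(\BdotG;\Omega^q) \cong H^p\big(\Omega^q(G^\bullet),\ \delta\big),
\end{equation}
the cohomology of the cosimplicial abelian group $p \mapsto \Omega^q(G^p)$ under the alternating sum $\delta = \sum_i (-1)^i d_i^*$ of face-map pullbacks. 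This removes all remaining sheaf cohomology and reduces the theorem to a purely cosimplicial computation.

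The heart of the argument is to identify this complex with the smooth group cochain complex of the coadjoint module $\Sym^q(\fg^\vee)$, shifted by $q$. I would trivialize the cotangent bundle of $G^p$ by the left-invariant Maurer--Cartan $1$-forms $\theta_j \coloneqq \pr_j^*(g^{-1}\,\d g)$ associated to the $j$-th factor, producing a decomposition
\begin{equation}
    \Omega^q(G^p) \cong C^\infty(G^p) \otimes \Lambda^q\big((\fg^\vee)^{\oplus p}\big).
\end{equation}
The next step is to read off the action of $\delta$ in this trivialization. The outer faces $d_0$ and $d_p$ merely drop a factor, whereas the multiplication faces $d_i$ with $0<i<p$ act on the Maurer--Cartan legs by the additive rule: the pullback under $d_i$ of the $i$-th leg equals $\mathrm{Ad}_{g_{i+1}^{-1}}\theta_i + \theta_{i+1}$. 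The two salient features here --- the coadjoint action $\mathrm{Ad}$, and the \emph{additive} (not alternating) way the Lie-algebra legs combine across a face --- are exactly what the statement requires: the former is the source of the coadjoint $G$-module structure on $\Sym^q(\fg^\vee)$, and the latter is what forces the exterior legs to reassemble as \emph{symmetric} powers in cohomology.

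I expect the main obstacle to be making the passage from the exterior algebra $\Lambda^q$ to the symmetric power $\Sym^q$ precise, together with the degree shift. Concretely, I would filter the cosimplicial complex by the number of Maurer--Cartan legs occupying a single simplicial slot and analyze the associated spectral sequence, or equivalently construct an explicit contracting homotopy that concentrates the $q$ legs; the additive face rule then symmetrizes them into a single element of $\Sym^q(\fg^\vee)$ while lowering the effective cosimplicial degree by $q$, and the residual differential becomes precisely the smooth group cohomology differential of the coadjoint module $\Sym^q(\fg^\vee)$. This is the same mechanism by which the bar construction of an additive object produces a polynomial (symmetric) algebra --- compare $H^*(BT;\R) = \Sym(\cdots)$ for a torus $T$ --- and at the universal level it is the Weil-algebra/Chern--Weil origin of the symmetric powers. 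As sanity checks, the diagonal $p=q$ yields $H^0_{\mathit{sm}}(G;\Sym^q(\fg^\vee)) = (\Sym^q(\fg^\vee))^G = \Inv[q](G)$, the invariant polynomials feeding the Chern--Weil map $\ChW$, while $q=0$ returns ordinary smooth group cohomology $H^p_{\mathit{sm}}(G;\R)$.

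Finally I would verify naturality and multiplicativity. Both sides are contravariantly functorial in $G$ --- by pullback of forms, respectively by restriction of smooth cochains together with the induced map on $\Sym^q(\fg^\vee)$ --- and since the trivialization and homotopy above can be chosen naturally in $G$, so is the isomorphism. For the ring structure I would check on normalized representatives that the wedge product of forms corresponds to the cup product in smooth cohomology tensored with the symmetric-algebra multiplication $\Sym^q(\fg^\vee)\otimes\Sym^{q'}(\fg^\vee)\to\Sym^{q+q'}(\fg^\vee)$ on coefficients; the one subtlety is to arrange the normalization compatibly with the shuffle product so that this correspondence holds on the nose.
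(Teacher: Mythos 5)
The paper does not actually prove this statement: it is imported from Bott~\cite{Bot73} (with \cite[Corollaries 16.2.4 and 16.2.5]{ADH21} cited for the closely related \cref{botts_theorem2}), so there is no in-paper argument to compare yours against. Judged on its own terms, the first half of your proposal is correct, and is in fact reproduced in the paper as \cref{OmegaCohomologyOfBdotG}: since $\Omega^q$ is fine on paracompact manifolds, the descent spectral sequence for the bar presentation collapses and $H^p(\BdotG;\Omega^q)$ is computed by the cosimplicial group $p\mapsto\Omega^q(G^p)$. Your Maurer--Cartan trivialization and the additive face rule $d_i^*\theta_i=\mathrm{Ad}_{g_{i+1}^{-1}}\theta_i+\theta_{i+1}$ are also correct.

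The gap is that essentially the entire content of Bott's theorem sits in the step you defer. Identifying $H^p\big(\Omega^q(G^\bullet),\delta\big)$ with $H^{p-q}_{\mathit{sm}}(G;\Sym^q(\fg^\vee))$ is precisely the assertion that the exterior legs, spread across the bar construction, reassemble into a symmetric power concentrated $q$ cosimplicial degrees higher --- the Koszul-duality computation $\Ext_{\Lambda V}(k,k)\cong\Sym(V[1])$, twisted by the adjoint action and with $C^\infty(G^p)$ coefficients. Saying you would ``filter by the number of legs occupying a single simplicial slot, or construct a contracting homotopy'' names the destination without supplying the route: you do not exhibit a filtration that $\delta$ actually preserves (note that the decomposition $\Omega^q(G^p)\cong C^\infty(G^p)\otimes\Lambda^q\big((\fg^\vee)^{\oplus p}\big)$ is not a tensor product of complexes, because the $\mathrm{Ad}_{g_{i+1}^{-1}}$ coefficients couple the legs to the function factor), you do not identify the associated graded or explain why the resulting spectral sequence degenerates, and you do not verify that the residual differential is the smooth group cochain differential of the coadjoint module rather than some twist of it. The analogy with $H^*(BT;\R)$ correctly predicts the answer but is not a proof. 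To complete the argument you would need to carry out this homological bookkeeping explicitly --- or simply cite \cite{Bot73} or \cite[Chapter 16]{ADH21}, which is what the paper does.
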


\begin{rem}
We care about the case where $p = q = n$, where \cref{botts_theorem} states that 
\begin{equation}
\label{2q_bott}
H^{n}(\BdotG; \Omega^n) \cong H_{\mathit{sm}}^0(G; \Sym^n(\fg^\vee)) = \Sym^n(\fg^\vee)^G = \Inv[n](G).
\end{equation}
\end{rem}
\begin{thm}[Bott~\cite{Bot73}]
\label{botts_theorem2}
There is a natural isomorphism $\phi\colon \Inv[n](G)\overset\cong\to H^{2n}(\BdotG;\R(n))$ such that the
composition
\begin{equation}
	\Inv[n](G)\overset\phi\longrightarrow H^{2n}(\BdotG;\R(n))\overset t\longrightarrow H^{2n}(\BdotG;\R)
	= H^{2n}(BG;\R)
\end{equation}
is the Chern-Weil homomorphism.
\end{thm}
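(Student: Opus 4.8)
The plan is to compute $H^{2n}(\BdotG;\Rn)$ through the finite filtration of the Deligne complex $\Rn=(\R\to\Omega^0\to\dots\to\Omega^{n-1})$ by form degree. Its associated graded pieces are the locally constant sheaf $\R$ in degree $0$ together with the shifted sheaves $\Omega^{p-1}[-p]$ for $1\le p\le n$, so I obtain a strongly convergent spectral sequence abutting to $H^{\bullet}(\BdotG;\Rn)$ whose first page is computed directly by \cref{botts_theorem}: one has $E_1^{0,q}=H^q(BG;\R)$, while for $p\ge 1$ we have $E_1^{p,q}=H^{q}(\BdotG;\Omega^{p-1}[-p])=H^{q-(p-1)}_{\mathit{sm}}(G;\Sym^{p-1}(\fg^\vee))$. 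The differential $d_1$ is induced by the de Rham differential $\d\colon\Omega^{p-1}\to\Omega^{p}$ on graded pieces. I would define $\phi$ as the identification of $\Inv[n](G)$ with the surviving contribution to total degree $2n$; concretely, Bott's simplicial Chern--Weil construction sends an invariant polynomial $f\in\Inv[n](G)$ to an explicit cocycle in the complex computing $H^{2n}(\BdotG;\Rn)$, and the spectral sequence is the tool for proving this assignment is an isomorphism and for identifying its composite with $t$.

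The cleanest case is $G$ compact. Averaging cochains over $G$ shows $H^{j}_{\mathit{sm}}(G;V)=0$ for every $j>0$ and every finite-dimensional smooth representation $V$, so all entries $E_1^{p,q}$ with $p\ge 1$ and $q>p-1$ vanish. In total degree $2n$ the only survivor is $E_1^{0,2n}=H^{2n}(BG;\R)\cong\Inv[n](G)$, the last isomorphism being the classical Chern--Weil theorem for compact groups, and there is no room for differentials into or out of this spot. The edge map in degree $2n$ is then exactly the truncation $t$, and it agrees with the Chern--Weil homomorphism by construction, so $\phi=t^{-1}$ settles the theorem when $G$ is compact.

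The real content is the non-compact case, and it is where I expect the main difficulty. Here the positive-degree smooth cohomology groups no longer vanish --- already for $G=\GLp_1(\R)\cong\R_{>0}$ one has $H^1_{\mathit{sm}}(\R_{>0};\R)\cong\R$ --- so $t\colon H^{2n}(\BdotG;\Rn)\to H^{2n}(BG;\R)=\Sym^n(\fk^\vee)^K$ is no longer an isomorphism and the abutment is assembled from several off-diagonal pieces via the differentials. To show the total degree-$2n$ contribution is nevertheless isomorphic to $\Inv[n](G)=\Sym^n(\fg^\vee)^G$, I would invoke van Est's theorem to rewrite each $E_1^{p,q}\cong H^{q-(p-1)}(\fg,\fk;\Sym^{p-1}(\fg^\vee))$ as relative Lie algebra cohomology, recognize the first page with its $d_1$ as a subquotient of the Weil algebra of $\fg$, and use the standard transgression computation identifying the resulting cohomology with the invariant polynomials $\Sym^n(\fg^\vee)^G$. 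Tracking these differentials and resolving the extension problem for non-compact $G$ is exactly the point at which I would lean on Bott's original argument in \cite{Bot73}.

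Finally, to pin down $t\circ\phi$ as the Chern--Weil homomorphism in general, I would use that $t$ is the projection onto the $p=0$ column of the filtration followed by the inclusion $E_\infty^{0,2n}\hookrightarrow H^{2n}(BG;\R)$: under $\phi$ this records the de Rham class of the Chern--Weil form of $f$, which for non-compact $G$ is its restriction $j^{*}f$ to the maximal compact $K$. This matches the bottom map of the pullback square in \cref{2n_n_pullback} together with Freed--Hopkins' identification $\Omega^{*}(\BnablaG)\cong\Inv(G)$. Naturality of the filtration and of the isomorphisms of \cref{botts_theorem} makes $\phi$ canonical and natural in $G$, completing the identification.
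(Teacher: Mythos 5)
The paper does not prove this theorem at all: the sentence immediately following it defers to Bott~\cite{Bot73} and to \cite[Corollaries 16.2.4 and 16.2.5]{ADH21}, so there is no internal argument to compare yours against, and your proposal has to be judged as a standalone proof. On those terms, the part you actually carry out is fine but is also the part with the least content: the filtration of $\R(n)$ by form degree, the identification of the $E_1$-page via \cref{botts_theorem}, and the collapse for compact $G$ (vanishing of positive-degree smooth cohomology by averaging) are all correct, and the edge-map identification of $t$ with $\ChW$ works there. But the paper needs this theorem precisely for the noncompact groups $\GLp_n(\R)$, and for noncompact $G$ your argument stops at ``lean on Bott's original argument.'' That deferred step is the entire substance of the statement: in total degree $2n$ the $E_1$-page has several potentially surviving entries (for $G=\GLp_2(\R)$, $n=2$, the $p=0$ entry contributes only $H^4(B\GLp_2(\R);\R)\cong\R$, while $\Inv[2](\GLp_2(\R))\cong\R^2$ by \cref{GLn_off_diag}, so an off-diagonal term must survive and an extension must be resolved), and the van Est/Weil-algebra comparison, the vanishing of higher differentials, and the extension problem are all asserted rather than proved.

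There is a second, structural gap: even granting the computation of the associated graded, a spectral sequence does not produce a \emph{natural} isomorphism $\phi$ with $t\circ\phi=\ChW$; you would need the splitting of the filtration to be natural in $G$, which is exactly the extension problem you set aside. The way the paper later uses the theorem (diagram~\eqref{omega1_to_omega2_big_diagram} in the proof of \cref{prop:cocycle}) shows what $\phi$ is meant to be: $\phi^{-1}$ is the ``curvature'' map $H^{2n}(\BdotG;\R(n))\to H^{2n}(\BdotG;\Omega^n[-n])=H^n(\BdotG;\Omega^n)\cong\Inv[n](G)$ induced by the chain map $\R(n)\to\Omega^n[-n]$ that applies $\d$ to the top term $\Omega^{n-1}$ (the analogue of $\d_1$ in~\eqref{om1_to_om2}), with the target identified by~\eqref{2q_bott}. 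That map is manifestly natural and compatible with $t$ and $\ChW$, and it sidesteps the extension problem; the remaining work is to show it is an isomorphism, which is where your spectral sequence (or the long exact sequence for the truncation $\R(n+1)\to\R(n)$) should actually be deployed. I would restructure the proof around that map rather than around an abstract identification of $E_\infty$-terms.
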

This is not exactly the same as the theorem Bott gave in~\cite{Bot73}; see~\cite[Corollaries 16.2.4 and
16.2.5]{ADH21} for a proof of this version. 
By Chern-Weil theory, we know how to compute $H^{2n}(BG; \R)$: let $j\colon K\hookrightarrow G$ be a maximal compact subgroup and $\fk$ be the
Lie algebra of $K$. Then the composition
\begin{equation}
	\Inv(G)\overset{\ChW}{\longrightarrow} H^{2n}(BG;\R)\overset{j^*}{\longrightarrow}
	H^{2n}(BK;\R) \overset{\ChW^{-1}}{\longrightarrow} \Inv(K)
\end{equation}
can be identified with the map 
%. Then
%$H^{2n}(BG; \R) \cong H^{2n} (BK; \R) \cong (\Sym^n(\mathfrak{k}))^k$, the entire composition
%\begin{equation}
$\Inv(G)\to\Inv(K)$ induced from the inclusion of Lie algebras $\fk\subset\fg$.
Combine this with \cref{2n_n_pullback} to conclude:
\begin{cor}
\label{CW_pull_cor}
Let $G$ be a Lie group with $\pi_0(G)$ finite. let $j\colon K\hookrightarrow G$ be a maximal compact subgroup and $\fk$ be the
Lie algebra of $K$. Then there is a pullback square
\begin{equation}
\label{CW_pullback}
   \begin{tikzcd}
	{H^{2n}(\BdotG; \Z(n))} & {H^{2n}(BG;\Z)} \\
	{\Inv(G)} & {\Inv(K),}
	\arrow[from=1-2, to=2-2]
	\arrow["t", from=1-1, to=1-2]
	\arrow["j^*", from=2-1, to=2-2]
	\arrow[from=1-1, to=2-1]
\end{tikzcd}
\end{equation}
where $t$ is truncation and $j^*$ is the pullback of functions induced by the inclusion of Lie algebras $j:
\fk\subset\fg$.
\end{cor}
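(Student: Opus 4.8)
The plan is to assemble the corollary directly from the pullback square of \cref{2n_n_pullback} by transporting its two bottom corners across Bott's isomorphism (\cref{botts_theorem2}) and the Chern--Weil isomorphism for the compact group $K$. The top row of the target square \eqref{CW_pullback} is literally the top row of \eqref{2n_n_eqn}, and a homotopy pullback square remains one after replacing the two bottom objects by isomorphic ones, provided the horizontal and vertical maps are carried along compatibly. So it suffices to produce isomorphisms $\Inv[n](G)\isoto H^{2n}(\BdotG;\R(n))$ and $\Inv[n](K)\isoto H^{2n}(BG;\R)$ under which the bottom truncation map $t$ of \eqref{2n_n_eqn} becomes $j^*$, and under which the right-hand change-of-coefficients map is unchanged.

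First I would take the isomorphism $\phi\colon\Inv[n](G)\isoto H^{2n}(\BdotG;\R(n))$ from \cref{botts_theorem2} to replace the bottom-left corner. For the bottom-right corner I would use that $G$ deformation retracts onto its maximal compact $K$, giving $H^{2n}(BG;\R)\cong H^{2n}(BK;\R)$, and then invoke Chern--Weil theory for the compact group $K$, which supplies $\ChW\colon\Inv[n](K)\isoto H^{2n}(BK;\R)$; composing these identifies the bottom-right corner with $\Inv[n](K)$.

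The only place where content enters is the verification that, under these two identifications, the bottom arrow $t$ becomes exactly $j^*$. By \cref{botts_theorem2}, the composite $t\circ\phi$ is the Chern--Weil homomorphism $\Inv[n](G)\to H^{2n}(BG;\R)$; post-composing with the identification $H^{2n}(BG;\R)\cong\Inv[n](K)$ built from restriction along $G\simeq K$ and $\ChW^{-1}$ yields precisely the composite $\ChW^{-1}\circ j^*\circ\ChW$ displayed just before the statement, which equals the restriction $j^*$ induced by the Lie algebra inclusion $\fk\hookrightarrow\fg$. Since the right-hand vertical map of \eqref{CW_pullback} is by definition the vertical map of \eqref{2n_n_eqn} post-composed with the same identification $H^{2n}(BG;\R)\cong\Inv[n](K)$, the entire square agrees with \eqref{2n_n_eqn} up to the two isomorphisms, hence is a pullback. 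I expect no real obstacle here: the work is purely bookkeeping, namely confirming that the naturality of $\phi$, of the retraction $G\simeq K$, and of $\ChW$ makes all the identifications commute with the structure maps, so that the pullback property is genuinely transported rather than requiring new geometric input.
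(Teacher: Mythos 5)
Your proposal is correct and is essentially the paper's own argument: the paper likewise starts from the pullback square of \cref{2n_n_pullback}, identifies the bottom-left corner with $\Inv[n](G)$ via \cref{botts_theorem2}, identifies $H^{2n}(BG;\R)$ with $\Inv[n](K)$ via the retraction onto $K$ and the Chern--Weil isomorphism, and observes that the composite $\ChW^{-1}\circ j^*\circ\ChW$ is restriction of invariant polynomials along $\fk\subset\fg$. No substantive difference.
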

\begin{cor}
\label{compact_off_diag}
If $G$ is a compact Lie group, the map $t\colon H^{2n}(\BdotG;\Z(n))\to H^{2n}(BG;\Z)$ is an isomorphism.
\end{cor}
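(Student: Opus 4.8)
The plan is to read off this statement directly from the pullback square of \cref{CW_pull_cor}, specialized to the case where $G$ is its own maximal compact subgroup. When $G$ is compact we may take $K = G$, so that $\fk = \fg$ and the inclusion $j\colon \fk \hookrightarrow \fg$ is the identity. Consequently the bottom map $j^*\colon \Inv(G) \to \Inv(K)$ appearing in the square \eqref{CW_pullback} is the identity map $\Inv(G) \to \Inv(G)$, and in particular an isomorphism. So the only work is to propagate this isomorphism to the top edge of the square.

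For that I would invoke the elementary fact that the base change of an isomorphism along any map is again an isomorphism. In the Cartesian square of \cref{CW_pull_cor}, the group $H^{2n}(\BdotG;\Z(n))$ is the fiber product $H^{2n}(BG;\Z)\times_{\Inv(K)}\Inv(G)$, and the truncation map $t$ is precisely the projection to $H^{2n}(BG;\Z)$, i.e.\ the base change of $j^*$ along the right-hand map $H^{2n}(BG;\Z)\to\Inv(K)$. Since $j^*$ is an isomorphism, every fiber of $t$ is a singleton, so $t$ is a bijective group homomorphism and hence an isomorphism. This completes the argument.

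I do not expect a genuine obstacle at this stage: all of the real content has already been established upstream, namely Bott's computation of $H^p(\BdotG;\Omega^q)$ (\cref{botts_theorem}), the resulting identification $\Inv[n](G)\cong H^{2n}(\BdotG;\R(n))$ compatible with Chern-Weil (\cref{botts_theorem2}), and the Mayer-Vietoris argument assembling these into the pullback square (\cref{lem:idk} and \cref{CW_pull_cor}). The one point that warrants a moment's care is that the square in \cref{CW_pull_cor} must be a genuine pullback of abelian groups rather than a merely homotopy-commutative diagram, since the base-change argument is applied to ordinary abelian groups; this is exactly what the proof of \cref{lem:idk} provides, where the vanishing $H^{2n-1}(BG;\R)=0$ collapses the long exact Mayer-Vietoris sequence into honest pullback squares. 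Granting that, the compact case is a purely formal consequence.
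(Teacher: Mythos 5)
Your argument is correct and is exactly the deduction the paper intends: \cref{compact_off_diag} is stated as an immediate consequence of \cref{CW_pull_cor}, obtained by taking $K=G$ so that $j^*$ is the identity and its base change $t$ is therefore an isomorphism. Your added remark that the square must be an honest pullback of abelian groups (guaranteed by the vanishing of $H^{2n-1}(BG;\R)$ in the proof of \cref{2n_n_pullback}) is the right point to check and is indeed what the paper's setup provides.
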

That is, off-diagonal differential lifts of characteristic classes exist and are unique for compact Lie groups. For
general noncompact $G$, off-diagonal differential lifts are not unique. We are interested in the case of $G =
\GLp_n(\R)$, the group of orientation-preserving invertible $n\times n$ matrices. Characteristic classes for
$\GL_n^+(\R)$ correspond to characteristic classes of rank-$n$ oriented real vector bundles.
\begin{cor}[{\cite[\S 17.3]{ADH21}}]
\label{GLn_off_diag}
Let $n\ge 2$. The space of lifts of the first Pontryagin
class $p_1\in H^4(B\GL_n^+(\R);\Z)$ across the truncation map
\begin{equation}
    t\colon H^4(B_\bullet\GL_n^+(\R); \Z(2)) \longrightarrow H^4(B_\bullet\GL_n^+(\R); \Z) = H^4(B\GL_n^+(\R); \Z)
\end{equation}
is a one-dimensional affine space. The image of these differential lifts under the map
\begin{equation}
	H^4(B_\bullet\GL_n^+(\R); \Z(2))\to H^4(B_\bullet\GL_n^+(\R); \R(2))\overset\cong\to
	\Inv[2](\GL_n^+(\R))
\end{equation}
is the affine space $\set{\lambda \tr(A)^2 - \frac{1}{8 \pi^2}\tr(A^2)\mid \lambda\in\R}$. Here,
$\tr: \gl_n^+(\R) \to \R$ is the usual trace map.
\end{cor}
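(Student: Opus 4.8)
The plan is to apply the pullback square of \cref{CW_pull_cor} with $G = \GLp_n(\R)$ and maximal compact $K = \SO(n)$, so that $\fg = \gl_n(\R)$ and $\fk = \so(n)$, taking cohomological degree $4 = 2\cdot 2$ (i.e.\ the ``$n$'' appearing in \cref{CW_pull_cor} is $2$). This presents $H^4(\Bdot\GLp_n(\R);\Z(2))$ as the fiber product $H^4(B\GLp_n(\R);\Z) \times_{\Inv[2](\SO(n))} \Inv[2](\GLp_n(\R))$ and reduces the statement to three pieces of linear data: the degree-$2$ invariant rings $\Inv[2](\GLp_n(\R)) = \Sym^2(\gl_n^\vee)^{\GLp_n(\R)}$ and $\Inv[2](\SO(n)) = \Sym^2(\so(n)^\vee)^{\SO(n)}$, the restriction map $j^*$ between them, and the image of $p_1$ in $\Inv[2](\SO(n))$.

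First I would compute the invariant rings. Since $\GLp_n(\R)$ is Zariski-dense in the Zariski-connected algebraic group $\GL_n$, its conjugation-invariant polynomials on $\gl_n$ agree with the full $\GL_n$-invariants, which by classical invariant theory are generated by the power traces $\tr(A^k)$. In degree $2$ these are $\tr(A)^2$ and $\tr(A^2)$, and they are linearly independent precisely when $n \ge 2$; hence $\Inv[2](\GLp_n(\R))$ is the two-dimensional space they span. The map $j^*$ is restriction of polynomials to skew-symmetric matrices: because $\tr(A) = 0$ on $\so(n)$ we get $j^*(\tr(A)^2) = 0$, while $j^*(\tr(A^2)) = \tr(A^2)|_{\so(n)}$ is the negative-definite quadratic form $A \mapsto -\sum_{ij} a_{ij}^2$, which is nonzero for $n \ge 2$. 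Thus $j^*$ annihilates the $\tr(A)^2$-direction and is injective on the $\tr(A^2)$-direction.

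Next I would pin down the image of $p_1$ under the right-hand vertical map, which is change of coefficients to $\R$ followed by the inverse Chern-Weil isomorphism $H^4(BG;\R) \cong H^4(B\SO(n);\R) \xrightarrow{\ChW^{-1}} \Inv[2](\SO(n))$. A standard Chern-Weil computation---complexify and read off $p_1 = -c_2(E\otimes\C)$ from $\det(I + \tfrac{i}{2\pi}\Omega)$---gives the first Pontryagin form $-\tfrac{1}{8\pi^2}\tr(\Omega^2)$, so the invariant polynomial attached to $p_1$ is $-\tfrac{1}{8\pi^2}\tr(A^2)|_{\so(n)}$. The one genuinely delicate point here is getting the normalizing constant $-\tfrac{1}{8\pi^2}$ exactly right, which requires tracking the convention of the Chern-Weil map in which one evaluates $f$ on $\Omega^{\wedge 2}$; I regard this bookkeeping as the main (modest) obstacle, since \cref{CW_pull_cor} has already done the structural work.

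Finally I would assemble these in the pullback square. A lift of $p_1$ is exactly an $f \in \Inv[2](\GLp_n(\R))$ whose image $j^*(f)$ equals the image of $p_1$ in $\Inv[2](\SO(n))$. Writing $f = \lambda\,\tr(A)^2 + \mu\,\tr(A^2)$, the condition $j^*(f) = -\tfrac{1}{8\pi^2}\tr(A^2)|_{\so(n)}$ forces $\mu = -\tfrac{1}{8\pi^2}$, by injectivity of $j^*$ on the $\tr(A^2)$-direction, and leaves $\lambda$ free. Hence the lifts form a one-dimensional affine space whose image in $\Inv[2](\GLp_n(\R))$ is exactly $\{\lambda\,\tr(A)^2 - \tfrac{1}{8\pi^2}\tr(A^2) \mid \lambda \in \R\}$, as claimed. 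I would close by noting that the argument is uniform in $n \ge 2$: although $\Inv[2](\SO(n))$ acquires an extra Pfaffian generator when $n = 4$, that direction lies outside both the image of $j^*$ and the image of $p_1$, so it never interferes, and the hypothesis $n \ge 2$ enters only through the linear independence of $\tr(A)^2, \tr(A^2)$ and the non-vanishing of $\tr(A^2)|_{\so(n)}$.
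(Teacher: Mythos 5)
Your proposal is correct and follows essentially the same route as the paper's proof: apply \cref{CW_pull_cor} with $K=\SO_n$, identify $\Inv[2](\GLp_n(\R))$ as the span of $\tr(A)^2$ and $\tr(A^2)$, observe that $j^*$ kills $\tr(A)^2$ but not $\tr(A^2)$, and match against the Chern--Weil polynomial $-\tfrac{1}{8\pi^2}\tr(A^2)$ of $p_1$. You are in fact slightly more careful than the paper at one point: the paper asserts that $\Inv[2](\SO_n)$ is one-dimensional, which fails for $n=4$ because of the Pfaffian, whereas you correctly note that this extra generator lies outside both the image of $j^*$ and the image of $p_1$, so the conclusion is unaffected.
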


\begin{proof}
Take the maximal compact $\SO_n \subset \GL_n^+(\R)$. The vector space $\Inv[2](\SO_n)$ is one-dimensional,
generated by $\tr(A^2)$. On the other hand, $\Inv[2](\GL_n^+(\R))$ is two-dimensional, with basis $\tr(A^2)$ and
$\tr(A)^2$. The natural map 
\begin{equation}
    j^* \colon \Inv[2](\GL_n^+(\R)) \longrightarrow \Inv[2](\SO_n)
\end{equation}
is surjective with kernel $\tr(A)^2$, since the elements of $\so_n$ are traceless.
By \cref{CW_pull_cor}, we have a pullback diagram
\begin{equation}
%\label{2n_n_eqn}
   \begin{tikzcd}
	{H^{4}(\Bdot \GL_n^+(\R); \Z(2))} & {H^{4}(B\GL_n^+(\R);\Z)} \\
	{\Inv[2](\GL_n^+(\R))} & {\Inv[2](\SO_n).}
	\arrow[from=1-2, to=2-2]
	\arrow["t", from=1-1, to=1-2]
	\arrow["j^*", from=2-1, to=2-2]
	\arrow[from=1-1, to=2-1]
\end{tikzcd}
\end{equation}
Recall that $p_1 \in H^{4}(B\GL_n^+(\R);\Z) \cong \Z$ is a generator, and its image under the Chern-Weil
homomorphism is $-\frac{1}{8\pi^2} \tr(A^2)$. Since $j^*$ is surjective with a one-dimensional kernel, the set
$(j^*)^{-1}(-\frac{1}{8\pi^2}\tr(A^2))$ is an affine line. Therefore $t^{-1}(p_1)$ is also an affine line, and its
image in $\Inv[2](\GL_n^+(\R))$ is precisely $(j^*)^{-1}(-\frac{1}{8\pi^2}(\tr(A^2)))$, which is the line of
polynomials of the form $-\frac{1}{8\pi^2}\tr(A^2) + q$ for $q\in\ker(j^*)$.
This is the affine line $\set{\lambda \tr(A)^2 - \frac{1}{8 \pi^2}\tr(A^2)\mid \lambda\in\R}$.
\end{proof}

When $n = 1$, $\GLp_1(\R)$ is homotopy equivalent to a point and $B\GLp_1(\R) \simeq *$. Therefore
$H^4(B\GLp_1(\R); \Z) \simeq H^4(*; \Z) \simeq 0$. The first Pontryagin class is $0 \in H^4(\Bdot\GLp_1(\R); \Z)$.
On the other hand, the stack $\Bdot\GLp_1(\R)$ is nontrivial. We declare that the differential lifts of the first
Pontryagin class are the preimages of $0$ under the map 
\begin{equation}
H^4(\Bdot\GL_1^+(\R); \Z(2)) \to  H^4(B\GLp_1(\R); \Z) = 0,   
\end{equation}
that is, all of $H^4(\Bdot\GL_1^+(\R); \Z(2))$. 

\begin{cor}
\label{lineofliftsforGLp1}
For $\GLp_1(\R)$, $H^4(\Bdot\GL_1^+(\R); \Z(2))$ is a one-dimensional $\R$ vector space, so there is a
one-dimensional vector space of lifts of the first Pontryagin class. The image of this vector space under the map
\begin{equation}
	H^4(B_\bullet\GL_1^+(\R); \Z(2))\to H^4(B_\bullet\GL_1^+(\R); \R(2))\overset\cong\to
	\Inv[2](\GL_1^+(\R))
\end{equation}
is the one-dimensional vector space $\Inv[2](\GL_1^+(\R)) = \set{\lambda A^2 \mid \lambda \in \R}$.
\end{cor}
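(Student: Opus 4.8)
The plan is to specialize the pullback square of \cref{CW_pull_cor} to the degenerate case $G = \GLp_1(\R)$, where both the classifying space and the maximal compact collapse, forcing the square to degenerate into an isomorphism. First I would record the elementary facts about $G = \GLp_1(\R) \cong \R_{>0}$: it is connected (so $\pi_0(G)$ is finite) and abelian, with one-dimensional Lie algebra $\gl_1^+(\R) \cong \R$, and its maximal compact subgroup is the trivial group $K = \set{1}$, with $\fk = 0$. Consequently $B\GLp_1(\R)$ is contractible, so $H^4(B\GLp_1(\R);\Z) = 0$, and $\Inv[2](K) = \Sym^2(\fk^\vee) = 0$.

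Next I would apply \cref{CW_pull_cor} with $n = 2$. Since the top-right corner $H^4(B\GLp_1(\R);\Z)$ and the relevant degree-$2$ invariants $\Inv[2](K)$ both vanish, the pullback square exhibits $H^4(\Bdot\GLp_1(\R);\Z(2))$ as the pullback of $\Inv[2](\GLp_1(\R)) \to 0 \leftarrow 0$; that is, the left-hand vertical map
\[
    H^4(\Bdot\GLp_1(\R);\Z(2)) \longrightarrow \Inv[2](\GLp_1(\R))
\]
is an isomorphism. By \cref{botts_theorem2} this map is precisely the composite displayed in the statement, so it remains only to identify the target.

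Finally I would compute $\Inv[2](\GLp_1(\R))$. As $\GLp_1(\R)$ is abelian, its adjoint action on $\gl_1^+(\R) \cong \R$ is trivial, so every quadratic polynomial is invariant and $\Inv[2](\GLp_1(\R)) = \Sym^2(\R^\vee) \cong \R$, spanned by $A \mapsto A^2$, i.e.\ the line $\set{\lambda A^2 \mid \lambda \in \R}$. Combined with the isomorphism above, this shows $H^4(\Bdot\GLp_1(\R);\Z(2))$ is a one-dimensional $\R$-vector space carried isomorphically onto this line; since by convention all of $H^4(\Bdot\GLp_1(\R);\Z(2))$ consists of lifts of the (vanishing) first Pontryagin class, the statement follows. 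I do not anticipate a genuine obstacle here: the only points requiring care are that $K$ is trivial and that the adjoint action on $\gl_1^+(\R)$ is trivial, both immediate from $\GLp_1(\R)$ being contractible and abelian, with all the real content supplied by the previously established \cref{CW_pull_cor} and \cref{botts_theorem2}.
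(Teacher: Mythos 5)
Your proposal is correct and follows the same route as the paper: apply \cref{CW_pull_cor} to $\GLp_1(\R)$, observe that both corners involving the (trivial) maximal compact and the contractible classifying space vanish so the left-hand vertical map is an isomorphism, and identify $\Inv[2](\GLp_1(\R))$ as the line spanned by $A^2$. The paper's proof is just a terser version of the same argument, leaving the degeneration of the pullback square implicit where you spell it out.
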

\begin{proof}
Applying \cref{CW_pull_cor} to $\GLp_1(\R)$, we see that
\begin{equation}
H^4(\Bdot\GL_1^+(\R); \Z(2)) \longrightarrow \Inv[2](\GL_1^+(\R))
\end{equation} 
is an isomorphism. As $\Inv[2](\GL_1^+(\R))$ is a one-dimensional vector space generated by the polynomial $A^2$,
we have a whole line of differential lifts of the trivial first Pontryagin class $p_1 = 0 \in H^4(B\GLp_1(\R);
\Z)$.
\end{proof}
\begin{rem}
\label{compatible_family}
The first Pontryagin classes are defined for all $\GLp_n(\R)$ for $n \geq 1$. By abuse of notation we denote all of
them by $p_1$. They form a compatible family of characteristic classes in the following sense: for $n \leq m$,
consider the inclusion $\GLp_n(\R) \subset \GLp_m(\R)$ coming from the inclusion $\R^n\subset\R^m$ as the first $n$
coordinates; then the pullback of $p_1 \in H^4(B\GLp_m(\R); \Z)$ along the map
\begin{equation}
    H^4(B\GLp_m(\R); \Z) \longrightarrow H^4(B\GLp_n(\R); \Z)
\end{equation}
is the first Pontryagin class $p_1 \in H^4(B\GLp_n(\R); \Z)$. Alternatively, we can think about $p_1$ as a class in
$H^4(B\GL_\infty^+(\R); \Z)$. 

This family of classes satisfies the \emph{Whitney sum formula}: for all oriented real vector bundles 
$E,F \to X$,
\begin{equation}
\label{WSF}
    p_1(E \oplus F) = p_1(E) + p_1(F) 
\end{equation}
in $H^4(X;\Z)$.
%\end{rem}
%\begin{rem}
Here there is an important subtlety: in general, the Whitney sum formula for Pontryagin classes only holds modulo
$2$-torsion. But for orientable vector bundles,~\eqref{WSF} does hold integrally: Brown~\cite[Theorem 1.6]{Bro82}
showed that for arbitrary vector bundles, the difference $p_1(E\oplus F) - p_1(E) - p_1(F)$ can be expressed in
terms of $w_1(E)$ and $w_1(F)$ (see also~\cite{Tho62}), so the difference vanishes when $E$ and $F$ are orientable.
So it is crucial that we are working with $\GLp_n(\R)$, and with $p_1$ --- even for oriented vector bundles, the
Whitney sum formula does not hold in general for higher-degree Pontryagin classes.
\end{rem}
The differential lifts of the first Pontryagin class also form a one-dimensional affine family of compatible
classes labeled by $\lambda \in \R$: given $\lambda$, the compatible classes $\hat{p}_1^\lambda \in H^4(\Bdot
\GLp_n(\R); \Z(2))$ are defined as follows:
\begin{enumerate}
    \item For $n \geq 2$, the image of $\hat{p}_1^\lambda$ under the map 
    \begin{equation}
    H^4(B_\bullet\GL_n^+(\R); \Z(2)) \longrightarrow \Inv[2](\GL_n^+(\R))
	\end{equation}
	is $\lambda \tr(A)^2 - \frac{1}{8 \pi^2}\tr(A^2)$. 
	\item For $n = 1$, the image of $\hat{p}_1^\lambda$ under the map 
	\begin{equation}
	    H^4(\Bdot\GL_1^+(\R); \Z(2)) \longrightarrow \Inv[2](\GL_1^+(\R))
	\end{equation}
	is $(\lambda - \frac{1}{8\pi^2})A^2$.
\end{enumerate}
This is a compatible family in the sense of \cref{compatible_family}: these classes pull back to one another under
the maps $\Bdot\GL_m^+(\R)\to\Bdot\GL_n^+(\R)$. Both $\tr(A^2)$ and $\tr(A)^2$ in $H^4(\Bdot\GLp_n(\R); \Z(2))$
pull back to $A^2$ in $H^4(\Bdot\GLp_1(\R); \Z(2))$.

We can ask for a family of lifts $\hat{p}_1^\lambda$ to satisfy the \emph{differential Whitney sum formula}: that
for all oriented real vector bundles $E,F \to X$, the equation 
\begin{equation}
\label{diffWhitney}
    \hat{p}_1^\lambda(E \oplus F) = \hat{p}_1^\lambda(E) + \hat{p}_1^\lambda(F) 
\end{equation}
holds in $H^4(X;\Z(2))$.

\begin{lem}
\label{distinguishedlifts}
    There is a unique family of lifts of $p_1$ that satisfy~\eqref{diffWhitney}. This family is the $\lambda = 0$
	family, corresponding to the Chern-Weil forms
    \begin{equation}
        -\frac{1}{8\pi^2}\tr(A^2) \in \Inv[2](\GL_n^+(\R))
    \end{equation}
    for $n \geq 2$ and
    \begin{equation}
         -\frac{1}{8\pi^2}A^2 \in \Inv[2](\GL_1^+(\R))
    \end{equation}
    for $n = 1$.
\end{lem}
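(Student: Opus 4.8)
The plan is to test the differential Whitney sum formula~\eqref{diffWhitney} on the universal example and read off which $\lambda$ works. Since the compatible families of differential lifts are exactly the $\hat p_1^\lambda$, proving the lemma amounts to showing that $\lambda = 0$ is the unique value for which~\eqref{diffWhitney} holds. The relevant universal situation is the block-diagonal inclusion $\GLp_m(\R)\times\GLp_n(\R)\hookrightarrow\GLp_{m+n}(\R)$, which induces a map $\oplus\colon\Bdot\GLp_m(\R)\times\Bdot\GLp_n(\R)\to\Bdot\GLp_{m+n}(\R)$ classifying external direct sums. Naturality of $\hat p_1^\lambda$ reduces~\eqref{diffWhitney} for all oriented $E,F$ to the single identity $\oplus^*\hat p_1^\lambda = \pr_1^*\hat p_1^\lambda + \pr_2^*\hat p_1^\lambda$ in $H^4(\Bdot\GLp_m(\R)\times\Bdot\GLp_n(\R);\Z(2))$, so I would prove the lemma by analyzing this identity.

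First I would establish necessity by pushing the identity through the natural map to $\Inv[2]$ of \cref{botts_theorem2}, under which $\oplus^*$ becomes restriction of invariant polynomials along $\gl_m\oplus\gl_n\hookrightarrow\gl_{m+n}$. Restricting $\lambda\tr(A)^2 - \frac{1}{8\pi^2}\tr(A^2)$ to $\mathrm{diag}(A,B)$ gives $(\tr A + \tr B)^2$ in the first term, producing a cross term $2\lambda\,\tr(A)\tr(B)$, while $\tr(A^2)$ restricts additively. This cross term is precisely the failure of additivity of the Chern-Weil form, so additivity forces $\lambda = 0$; the same computation, with $\tr$ read as the identity on $\gl_1 = \R$, covers $n = 1$.

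For sufficiency I would set $D \coloneqq \oplus^*\hat p_1^0 - \pr_1^*\hat p_1^0 - \pr_2^*\hat p_1^0$ and apply \cref{CW_pull_cor} to the connected group $G = \GLp_m(\R)\times\GLp_n(\R)$, whose maximal compact is $\SO_m\times\SO_n$. Because that square is a pullback, the comparison map $H^4(\Bdot G;\Z(2))\to H^4(BG;\Z)\times\Inv[2](G)$ is injective, so it is enough to check that both components of $D$ vanish. The $\Inv[2]$-component vanishes by the $\lambda = 0$ case of the cross-term computation. The integral component is the value of $p_1(E\oplus F) - p_1(E) - p_1(F)$, which vanishes for oriented bundles by Brown's theorem as recalled in \cref{compatible_family}. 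Hence $D = 0$.

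I expect the integral component of sufficiency to be the main obstacle: the differential statement does not follow from the Chern-Weil forms alone, and the vanishing of $p_1(E\oplus F) - p_1(E) - p_1(F)$ is genuinely special to oriented bundles --- it holds only up to $2$-torsion in general. This is exactly where the orientation hypothesis built into $\GLp_n(\R)$ is indispensable, and the injectivity supplied by the pullback square of \cref{CW_pull_cor} is what allows the two independent vanishing statements to be combined into $D = 0$.
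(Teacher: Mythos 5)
Your proposal is correct and follows the same strategy as the paper: reduce to the universal example $\Bdot\GLp_n(\R)\times\Bdot\GLp_m(\R)$ with the block-sum map, push into invariant polynomials, and observe that the cross term $2\lambda\tr(A)\tr(B)$ is the sole obstruction, forcing $\lambda=0$. The one place you diverge is in the sufficiency direction. The paper checks the identity under the single map $t\colon H^4(\Bdot\GLp_n(\R)\times\Bdot\GLp_m(\R);\Z(2))\to \Inv[2](\GL_n^+(\R)\times\GL_m^+(\R))$, which it asserts (without proof) to be injective; that injectivity is equivalent to $H^4(B(\SO_n\times\SO_m);\Z)$ being torsion-free, which is true but not established in the text. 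You instead use only the injectivity of the map into the product $H^4(BG;\Z)\times\Inv[2](G)$, which is automatic from the pullback square of \cref{CW_pull_cor}, at the cost of separately checking the integral component via the oriented Whitney sum formula of \cref{compatible_family} (Brown's theorem). Your version is marginally longer but makes explicit exactly where the orientation hypothesis enters and avoids the unproved injectivity claim; the paper's version is shorter but leans on a fact about torsion in $H^4$ that the reader must supply. Both are valid.
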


\begin{proof}
    It is sufficient to check the universal case, where $X = \Bdot\GLp_n(\R) \times \Bdot\GLp_m(\R)$ and $E$ and
	$F$ are the tautological bundles on the first, resp.\ second factors.
    We denote the projections to the factors by $\pr_n, \pr_m\colon X \to \Bdot\GLp_{n,m}(\R)$.
    Direct sum of vector bundles induces a map
    \begin{equation}
        s\colon \Bdot\GLp_n(\R) \times \Bdot\GLp_m(\R) \to \Bdot\GLp_{n+m}(\R).
    \end{equation}
    %It has the projection maps 
    %\begin{equation}
    %pr_1: \Bdot\GLp_n \times \Bdot\GLp_m \to \Bdot\GLp_n
    %\end{equation}
    %and 
    %\begin{equation}
    %pr_2: \Bdot\GLp_n \times \Bdot\GLp_m \to \Bdot\GLp_m.
    %\end{equation}
    %Direct sum of vector bundles gives a map 
    The differential Whitney sum formula \eqref{diffWhitney} reads
    \begin{equation}
    \label{eq_to_check}
        \pr_1^*(\hat{p}_1^\lambda) + \pr_2^*( \hat{p}_1^\lambda) = s^*( \hat{p}_1^\lambda).
    \end{equation}
    We can check this under the injective map 
    \begin{equation}
	    t: H^4(\Bdot\GLp_n(\R) \times \Bdot\GLp_m(\R); \Z(2)) \to \Inv[2](\GL_m^+(\R)\times\GL_n^+(\R)).
%		{\Sym^2((\gl_n^+(\R) \oplus
%		\gl_m^+(\R))^\vee)^{\GLp_n(\R) \times \GLp_m(\R)}}.
	\end{equation}
    %We view $A \in (\gl_n^+(\R) \oplus \gl_m^+(\R))$ as a block diagonal matrix 
    %\begin{equation}
    %\begin{pmatrix}
    %A_n & 0\\
    %0 & A_m
    %\end{pmatrix}.
    %\end{equation}
    %We will write the block diagonal matrix $A = A_n \oplus A_m$.
    We interpret an element $A \in \gl_n^+(\R) \oplus \gl_m^+(\R)$ as a block diagonal matrix 
    $A_n \oplus A_m \in \gl_{n+m}^+(\R)$.
    %We have a description of the invariant polynomial corresonding to $\hat{p}_1^\lambda$, we see that 
    %\begin{equation}
    %t(\pr_1^*(\hat{p}_1^\lambda) + \pr_2^*( \hat{p}_1^\lambda)) =
    %\lambda (\tr(A_n)^2 + \tr(A_m)^2) - \frac{1}{8 \pi^2}(\tr(A_n^2)+\tr(A_m^2)).
    %\end{equation}
    %On the other hand, 
    %\begin{equation}
    %\begin{aligned}
    % t(s^*( \hat{p}_1^\lambda)) &= \lambda \tr(A_n \oplus A_m)^2 - \frac{1}{8 \pi^2}\tr((A_n \oplus A_m)^2) \\ &= 
    % \lambda (\tr(A_n)^2 + \tr(A_m)^2) - \frac{1}{8 \pi^2}(\tr(A_n^2)+\tr(A_m^2)) + 2\lambda (\tr(A_n)\tr(A_m)).
    %    \end{aligned}
    %\end{equation}
    Plugging in the polynomials corresponding to $\hat{p}_1^\lambda$ (see~\cref{GLn_off_diag}),~\eqref{eq_to_check} becomes
    \begin{equation}
        \lambda \left(\tr(A_n)^2 + \tr(A_m)^2\right) - \frac{1}{8 \pi^2} \left( \tr(A_n^2) + \tr(A_m^2) \right) =
        \lambda \tr(A_n \oplus A_m)^2 - \frac{1}{8 \pi^2}\tr((A_n \oplus A_m)^2),
    \end{equation}
    which forces $\lambda=0$.
\end{proof}
We will denote this distinguished family by $\hat{p}_1$.

\section{Cocycles for off-diagonal differential lifts}
\label{sec:cocycle}
    In this section we compute explicit cocycles for the differential lifts of the first Pontryagin class for $G =
\GL_1^+(\R) = \R^\times_+$. We will identify $\R^\times_+$ with $\R$ via the natural logarithm, inverse
to $\exp\colon\R \to \R^\times_+$.

Recall from~\cref{sec:cocycle} that that there is a one-dimensional vector space of lifts of
$p_1$ for $\GL_1^+(\R)$ (\cref{lineofliftsforGLp1}), with a distinguished element $\hat{p}_1$ 
whose image under the isomorphism
$H^4(\BdotG;\Z(2))\to\Inv[2](G)$ is $-\frac{1}{8\pi^2}A^2$ (\cref{distinguishedlifts}).
We would like to find a cocycle representing $\hat{p}_1 \in H^4(\Bdot \R; \Z(2))$; by scaling, this will give us
cocycle representatives for the entire line of differential lifts of $p_1$.

If $M$ is a paracompact manifold, possibly infinite-dimensional, then $M$ admits a partition of unity, so
$\Omega^i$ is acyclic on $M$. This implies the following well-known lemma:
\begin{lem}
    \label{OmegaCohomologyOfBdotG}
    Let $G$ be a paracompact Lie group, possibly infinite-dimensional.
    Then the cohomology groups $H^*(\BdotG; \Omega^i)$ are computed
    by the cochain complex 
    \begin{equation}
        \begin{tikzcd}
	        \Omega^i(\BdotG): \Omega^i(\point)  
	        \ar[r] & \Omega^1(G)  \ar[r]  & \Omega^i(G\times G) 
            \ar[r] &\Omega^i(G\times G\times G) \dots.
        \end{tikzcd}
    \end{equation}
\end{lem}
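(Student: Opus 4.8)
The plan is to compute $H^*(\BdotG; \Omega^i)$ by means of the spectral sequence associated to the simplicial presentation $\BdotG = (\point \leftleftarrows G \leftarrow G \times G \leftarrow \dots)$ from \cref{Bdot}, and then to collapse it using the acyclicity of $\Omega^i$ on paracompact manifolds recalled just above. By the definition of $H^*(\mathbf X; E)$ for a stack presented by a simplicial manifold, and since $E = \Omega^i$ is a single sheaf concentrated in one degree, the relevant object is the double complex obtained by combining the cosimplicial direction $p \mapsto \Omega^i(G^{\times p})$---with differential the alternating sum $\sum_j (-1)^j d_j^*$ of pullbacks along the face maps---with an acyclic resolution of $\Omega^i$ computing local cohomology in each simplicial degree.

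First I would filter this double complex by the simplicial degree $p$, producing a spectral sequence
\begin{equation}
    E_1^{p,q} = H^q(G^{\times p}; \Omega^i) \Longrightarrow H^{p+q}(\BdotG; \Omega^i).
\end{equation}
I would then invoke the acyclicity input: each $G^{\times p}$ is a paracompact manifold, so it admits partitions of unity, $\Omega^i$ is a fine sheaf on it, and hence $H^q(G^{\times p}; \Omega^i) = 0$ for $q > 0$ while $H^0(G^{\times p}; \Omega^i) = \Omega^i(G^{\times p})$. This concentrates the $E_1$ page in the single row $q = 0$, which is exactly the cochain complex
\begin{equation}
    \Omega^i(\point) \to \Omega^i(G) \to \Omega^i(G \times G) \to \Omega^i(G\times G \times G) \to \dots
\end{equation}
from the statement. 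A spectral sequence supported in one row degenerates at $E_2$ with no extension problems, so $H^n(\BdotG; \Omega^i)$ is precisely the $n$th cohomology of this complex, as claimed.

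The step I expect to carry the real content is the acyclicity input rather than the homological bookkeeping: one must ensure that the fine-sheaf argument survives into the infinite-dimensional (Fréchet) setting for every factor $G^{\times p}$, which is exactly what the paracompactness hypothesis flagged in the preceding paragraph supplies. A small point worth verifying along the way is that finite products of the paracompact, metrizable manifold $G$ remain paracompact, so that the acyclicity of $\Omega^i$ applies uniformly in $p$; once this is in hand, the collapse of the spectral sequence and the identification of the surviving row with the displayed complex are automatic.
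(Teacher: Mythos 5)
Your proposal is correct and matches the paper's (implicit) argument: the paper derives the lemma in one line from the acyclicity of $\Omega^i$ on paracompact manifolds, which is exactly the collapse of your simplicial-degree spectral sequence to the row $q=0$. Your extra care about metrizability of $G$ guaranteeing paracompactness of the finite products $G^{\times p}$ is a reasonable point the paper leaves unstated, but it does not change the route.
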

All finite-dimensional Lie groups are paracompact, and so is $\Gamma$~\cite[Remark 4.11]{Pal21}.

%\end{rem}
% Consider the two maps of sheaves
Consider the following factorization of the exterior derivative $\d: \Omega^1 \to \Omega^2$:
\begin{equation}
\label{om1_to_om2}
    \Omega^1 \overset{\iota}{\longrightarrow} \Z(2)[2] \overset{\d_1}{\longrightarrow} \Omega^2,
\end{equation}
where $\iota$ is the inclusion of $\Omega^1$ into the complex $\Z(2)[2]$, and $\d_1$ is $\d$ on $\Omega^1$ and
$0$ on the other terms in $\Z(2)$. We will also abuse notation and let $\d$, $\d_1$, and $\iota$ denote the maps on
cohomology induced by these maps between complexes of sheaves.
% The composite $\d_1\circ\iota\colon \Omega^1\to\Omega^2$ 
% in~\eqref{om1_to_om2} is simply the exterior derivative $\d$. 

\begin{lem}
\label{pullback_to_diff_forms}
    The maps of sheaves in~\eqref{om1_to_om2} induce a triangle of isomorphisms 
    between three real lines
%    (we abuse notation and denote the map on cohomology by the same symbol)
    \begin{equation}
        % https://q.uiver.app/?q=WzAsMyxbMSwwLCJIXjQoXFxCZG90XFxSO1xcWigyKSkiXSxbMCwxLCJIXjIoXFxCZG90XFxSO1xcT21lZ2FeMSkiXSxbMiwxLCJIXjIoXFxCZG90XFxSO1xcT21lZ2FeMikiXSxbMSwyLCJcXGQiXSxbMSwwLCJcXGlvdGEiXSxbMCwyLCJcXGRfMSJdXQ==
        \begin{tikzcd}
        	& {H^4(\Bdot\R;\Z(2))} \\
        	{H^2(\Bdot\R;\Omega^1)} && {H^2(\Bdot\R;\Omega^2)}.
        	\arrow["\d", from=2-1, to=2-3]
        	\arrow["\iota", from=2-1, to=1-2]
        	\arrow["\d_1", from=1-2, to=2-3]
        \end{tikzcd}
    \end{equation}
\end{lem}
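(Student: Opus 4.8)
The plan is to show all three groups are one-dimensional $\R$-vector spaces and that the two maps $\iota$ and $\d_1$ are isomorphisms; since the triangle commutes (the composite $\d_1 \circ \iota$ is the de Rham differential $\d$ by the factorization in~\eqref{om1_to_om2}), it then suffices to check that any two of the three maps are isomorphisms, or more economically that each group is a line and the maps are nonzero. By \cref{botts_theorem}, $H^2(\Bdot\R; \Omega^2) \cong H^0_{\mathit{sm}}(\R; \Sym^2(\fR^\vee)) = \Sym^2(\fR^\vee)^{\R} = \Inv[2](\GLp_1(\R))$, which is the one-dimensional space $\set{\lambda A^2 \mid \lambda \in \R}$ from \cref{lineofliftsforGLp1}. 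Similarly \cref{botts_theorem} computes $H^2(\Bdot\R; \Omega^1) \cong H^1_{\mathit{sm}}(\R; \Sym^1(\fR^\vee)) = H^1_{\mathit{sm}}(\R; \fR^\vee)$, and I would verify this smooth cohomology group is also one-dimensional. Finally, $H^4(\Bdot\R; \Z(2))$ is a real line by \cref{lineofliftsforGLp1}.

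First I would identify the generators explicitly at the cochain level, using the complex of \cref{OmegaCohomologyOfBdotG} (valid here since $\R$ is paracompact): $\Omega^q(\Bdot\R)$ is computed by $\Omega^q(\point) \to \Omega^q(\R) \to \Omega^q(\R \times \R) \to \cdots$. For $q = 2$ the relevant cocycle sits in $\Omega^2(\R \times \R)$ and represents the Chern--Weil polynomial $A^2$; for $q = 1$ the relevant cocycle sits in $\Omega^1(\R \times \R)$. The key computation is to pin down a $1$-cochain $\beta \in \Omega^1(\R \times \R)$ that is a cocycle in the bar complex for $\Omega^1$, and to check that $\d_1 \circ \iota$ applied to its class, namely the fiberwise exterior derivative $\d\beta$, represents the generator $A^2$ of $H^2(\Bdot\R;\Omega^2)$ up to a nonzero scalar. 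Writing $\R \cong \GLp_1(\R)$ multiplicatively via $\exp$, the natural candidate for $\beta$ comes from the Maurer--Cartan form / the cocycle whose differential is the curvature square; I expect $\beta$ to be essentially $x_1\,\d x_2$ (in logarithmic coordinates) or a close variant, with $\d\beta = \d x_1 \wedge \d x_2$ recovering $A^2$.

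The main obstacle I anticipate is controlling the map $\iota\colon H^2(\Bdot\R;\Omega^1) \to H^4(\Bdot\R;\Z(2))$ directly, since $\Z(2)$ mixes $\Z$, $\Omega^0$, and $\Omega^1$ and its off-diagonal cohomology is less transparent than the pure $\Omega^q$ groups. Rather than analyze $\iota$ head-on, I would instead establish that $\d = \d_1 \circ \iota\colon H^2(\Bdot\R;\Omega^1) \to H^2(\Bdot\R;\Omega^2)$ is an isomorphism between lines by the explicit cochain computation above, and that $\d_1$ is an isomorphism, deducing that $\iota$ is an isomorphism by the two-out-of-three property for the commuting triangle. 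To see $\d_1$ is an isomorphism I would use the fiber sequence $\Omega^1[-2] \overset{\iota}{\to} \Z(2)[2] \to \Z(1)[2]$ (the shifted version of the truncation fiber sequence $\Omega^1[-2] \to \R(2) \to \R(1)$ rationally, or its integral refinement), together with the vanishing of the neighboring groups $H^*(\Bdot\R;\Z(1))$ in the relevant degrees; the long exact sequence then forces $\iota$, and hence $\d_1$, to be an isomorphism onto the line $H^4(\Bdot\R;\Z(2))$. Checking the requisite vanishing of the adjacent $\Z(1)$-cohomology groups is the one place where I would need to do a small amount of careful bookkeeping.
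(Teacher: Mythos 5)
Your proposal is correct and follows essentially the same route as the paper: the fiber sequence $\Omega^1 \to \Z(2)[2] \to \Z(1)[2]$ together with the vanishing of $H^3(\Bdot\R;\Z(1))$ and $H^4(\Bdot\R;\Z(1))$ shows $\iota$ is an isomorphism, while the explicit cocycles $x_1\ud x_2$ and $\d x_1\ud x_2$ combined with Bott's theorem show $\d$ is a nonzero map of real lines. The only quibble is presentational: your final paragraph frames the argument as proving $\d_1$ is an isomorphism in order to deduce $\iota$ by two-out-of-three, yet the long exact sequence you invoke there establishes $\iota$ directly, so your logic unwinds to exactly the paper's proof.
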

\begin{proof}
    It suffices to prove two of the above maps are isomorphisms and that
    one of the groups is abstractly isomorphic to $\R$. 
    We first show $\iota$ is an isomorphism and then show $\d$ is a nonzero
    map $\R \to \R$.

    We begin by proving $H^k(\Bdot\R;\Z(1))=0$ for $k > 2$. This follows from
    \begin{align*}
        &H^k(\Bdot\R;\Z)=H^k(B\R;\Z)=H^k(\point;\Z)\overset{k>0}{=} 0, \\
        &H^k(\Bdot\R;\Omega^0)=H^k_{\Lie}(\fR;\R) \iso H^k_{\Lie}(\R;\R)\overset{k>1}{=}0,
    \end{align*}
    and the long exact sequence
    \begin{equation}
        \cdots \to H^k(\Bdot\R; \Omega^0) \to H^{k+1}(\Bdot\R; \Z(1)) \to 
        H^{k+1}(\Bdot\R;\Z) \to H^{k+1}(\Bdot\R;\Omega^0) \to \cdots.
    \end{equation}
    Now consider the fiber sequence of coefficient sheaves
    \begin{equation}
        \begin{tikzcd}
            \Omega^1 \rar["\iota"] & \Z(2)[2] \rar & \Z(1)[2].
        \end{tikzcd}
    \end{equation}
    The associated long exact sequence in cohomology shows 
    \begin{equation}
    \label{iota_is_iso}
        \iota: H^2(\Bdot\R; \Omega^1) \isoto H^2(\Bdot\R;\Z(2)[2]) = H^4(\Bdot\R;\Z(2)) 
    \end{equation}
    is an isomorphism, because both $H^1(\Bdot\R; \Z(1)[2])=H^3(\Bdot\R; \Z(1))$ and $H^2(\Bdot\R;\Z(1)[2])=H^4(\Bdot\R;\Z(1))$ vanish.

    Using the explicit model for $H^2(\Bdot\R;\Omega^i)$ given in~\cref{OmegaCohomologyOfBdotG},
    it is straightforward to show that $x_1 \ud x_2 \in \Omega^1(\R^2)$ is a nontrivial
    $\Omega^1$-valued 2-cocycle over $\Bdot\R$
    Similarly, its differential $\ud x_1 \ud x_2 \in \Omega^2(\R^2)$ is nontrivial in $H^2(\Bdot\R;\Omega^2)$. 
    Denote the Lie algebra of $\R$ by $\fR \iso \R$. 
    Bott's theorem (\cref{botts_theorem}) implies
    \begin{align}
        H^2(\Bdot \R; \Omega^1) &\cong H^1_{\mathit{sm}}(\R; \fR^\vee) \iso \Hom(\R, \R) = \R\\
        H^2(\Bdot \R; \Omega^2) &\cong H^0_{\mathit{sm}}(\R; \Sym^2 \fR^\vee) = (\Sym^2(\fR^\vee))^\R =
    	\Sym^2(\R) \iso \R,
    \end{align}
    so the cocycles $[x_1 \ud x_2] \in H^2(\Bdot\R;\Omega^1)$ and its image under $\d$, $[\ud x_1 \ud x_2] \in H^2(\Bdot\R;\Omega^2)$,
    generate the cohomology groups they live in. This shows that indeed, $\d$ is a nonzero map
    of real lines.
\end{proof}

\begin{prop} \label{prop:cocycle}
The preimage of the distinguished class $\hat{p}_1 \in H^4(\Bdot \R; \Z(2))$ under the isomorphism
$\iota:H^2(\Bdot \R; \Omega^1) \isoto H^4(\Bdot \R; \Z(2))$ from \cref{pullback_to_diff_forms}
has a cocycle representative $\frac{1}{8\pi^2} x_1\ud x_2$ in $\Omega^1(\R^2)$.
\end{prop}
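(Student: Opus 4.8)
The plan is to reduce the statement to the determination of a single real scalar. By \cref{pullback_to_diff_forms}, the map $\iota\colon H^2(\Bdot\R;\Omega^1)\isoto H^4(\Bdot\R;\Z(2))$ is an isomorphism, and $H^2(\Bdot\R;\Omega^1)$ is the real line spanned by $[x_1\ud x_2]$. Hence $\iota^{-1}(\hat p_1)=c\,[x_1\ud x_2]$ for a unique $c\in\R$, and it suffices to show $c=\tfrac{1}{8\pi^2}$. The only concrete information we have about $\hat p_1$ is its image under the isomorphism $H^4(\Bdot\R;\Z(2))\cong\Inv[2](\R)$ of \cref{lineofliftsforGLp1}, which by \cref{distinguishedlifts} (using the identification $\R^\times_+\cong\R$) is the Chern--Weil polynomial $-\tfrac{1}{8\pi^2}A^2$. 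So the strategy is to compute the composite
\begin{equation}
H^2(\Bdot\R;\Omega^1)\xrightarrow{\ \iota\ }H^4(\Bdot\R;\Z(2))\longrightarrow\Inv[2](\R)
\end{equation}
on the generator $[x_1\ud x_2]$ and match the two expressions for the image of $\hat p_1$.

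To evaluate this composite I would pass through the triangle of \cref{pullback_to_diff_forms}. Since $\d_1\circ\iota=\d$, the class $\iota([x_1\ud x_2])$ is sent by $\d_1$ to $[\ud x_1\,\ud x_2]\in H^2(\Bdot\R;\Omega^2)$. The key compatibility is that the map $H^4(\Bdot\R;\Z(2))\to\Inv[2](\R)$ is computed by $\d_1$ followed by Bott's identification $H^2(\Bdot\R;\Omega^2)\cong\Inv[2](\R)$ of \cref{botts_theorem}: this expresses that the isomorphism $\phi$ of \cref{botts_theorem2} is built so that its curvature (top differential-form) component recovers the Bott form, i.e.\ that applying $\d_1$ to $\phi(P)$ returns $P$. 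Granting this, the composite above is exactly the Bott isomorphism evaluated on $[\ud x_1\,\ud x_2]$.

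It then remains to compute Bott's ring isomorphism on $[\ud x_1\,\ud x_2]$. In simplicial degree one, \cref{botts_theorem} sends the Maurer--Cartan class $[\ud x]\in H^1(\Bdot\R;\Omega^1)$ to the tautological generator $A\in\fR^\vee$. Because the isomorphism is multiplicative and the cup product on the simplicial--de Rham complex of $\Bdot\R$ carries a Koszul sign $(-1)^{q p'}$, the cup square satisfies $[\ud x]^2=-[\ud x_1\,\ud x_2]$, so that $[\ud x_1\,\ud x_2]\mapsto-A^2$. Therefore the displayed composite sends $[x_1\ud x_2]\mapsto-A^2$, and matching $c\cdot(-A^2)=-\tfrac{1}{8\pi^2}A^2$ yields $c=\tfrac{1}{8\pi^2}$, as claimed.

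The main obstacle is entirely one of signs and normalizations rather than concept. One must verify both that the invariant-polynomial map of \cref{lineofliftsforGLp1} is computed by the curvature map $\d_1$ under Bott's $\phi$, and that the simplicial cup-product sign produces $[\ud x_1\,\ud x_2]=-[\ud x]^2$ rather than $+[\ud x]^2$. It is precisely this Koszul sign that converts the negative Chern--Weil normalization $-\tfrac{1}{8\pi^2}A^2$ of $p_1$ into the positive coefficient $+\tfrac{1}{8\pi^2}$ of the stated cocycle; an error in either sign would flip the final answer.
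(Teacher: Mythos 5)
Your overall strategy is the same as the paper's: reduce to pinning down one scalar, pass to $\Inv[2](\R)$ via the triangle of \cref{pullback_to_diff_forms} and the factorization of $\d_1$ through Bott's isomorphism, and use multiplicativity plus the Koszul sign $[\ud x]^2=-[\ud x_1\ud x_2]$ to propagate a degree-one normalization to degree two. The two compatibilities you flag at the end as needing verification (that the map to $\Inv[2](\R)$ is $\mathit{Bott}^{-1}\circ\d_1$, and the cup-product sign) are indeed verified in the paper and come out as you say.

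The genuine gap is the step you do \emph{not} flag: the assertion that Bott's isomorphism in degree one sends the tautological generator $A=\ud x_e\in\fR^\vee$ to $[\ud x]\in H^1(\Bdot\R;\Omega^1)$. \Cref{botts_theorem} only asserts the existence of a natural ring isomorphism; the normalization is fixed by \cref{botts_theorem2}, i.e.\ by compatibility with the Chern--Weil homomorphism after truncation to $H^*(BG;\R)$. For $G=\R$ this characterization is vacuous in the relevant degrees, since $H^{>0}(B\R;\R)=0$, so one cannot simply read off the degree-one normalization for $\R$; a rescaled isomorphism would satisfy the same stated properties and would change your constant $c$. This is where essentially all of the paper's work goes: it pins down the normalization for the compact group $\U_1$, where the Chern--Weil compatibility has content (the polynomial $\tfrac{1}{2\pi i}\,\mathrm{id}$ attached to $c_1$ corresponds to the form $\ud\theta=\tfrac{1}{2\pi i}\ud\log\chi_1$), and then transports it to $\R$ by naturality along $\exp(2\pi i\,\text{--})\colon\R\to\U_1$, whose derivative contributes the factor $2\pi i$ that makes everything consistent. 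Without this detour (or some equivalent argument fixing the normalization of the Bott map for the noncompact group $\R$), the value $c=\tfrac{1}{8\pi^2}$ is not established.
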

\begin{proof}
The proof boils down to computing a proportionality factor coming from the standard normalization of 
Chern-Weil theory.
Consider diagram~\eqref{omega1_to_omega2_big_diagram}: in \cref{pullback_to_diff_forms}, we saw that $\d\colon
H^2(\Bdot\R;\Omega^1)\to H^2(\Bdot\R;\Omega^2)$ factors as $\d_1\circ\iota$, and in
\cref{botts_theorem,botts_theorem2}, we saw that
$\d_1\colon H^4(\Bdot\R;\Z(2))\to H^2(\Bdot\R;\Omega^2)$ further factors through $H^4(\Bdot\R;\R(2))$ and
$\Inv[2](\R)$.

% https://q.uiver.app/?q=WzAsMTMsWzEsMV0sWzIsMV0sWzIsMCwiSF40KFxcQmRvdFxcUjtcXFIoMikpIl0sWzQsMSwiLVxcZnJhY3sxfXs4XFxwaV4yfVxcdWQgeF8xIFxcdWQgeF8yIl0sWzAsMCwiSF4yKFxcQmRvdFxcUjtcXE9tZWdhXjEpIl0sWzEsMCwiSF40KFxcQmRvdFxcUjtcXFooMikpIl0sWzMsMCwiXFxTeW1eMihcXGZnXlxcdmVlKV5cXFIiXSxbNCwwLCJIXjIoXFxCZG90XFxSO1xcT21lZ2FeMikiXSxbMiwyXSxbNCwyLCItXFxmcmFjezF9ezhcXHBpXjJ9XFx1ZCB4XzFcXHVkIHhfMiJdLFswLDEsIi1cXGZyYWN7MX17OFxccGleMn0geF8xXFx1ZCB4XzIiXSxbMSwyLCJcXGhhdHtwfV8xIl0sWzMsMiwiLVxcZnJhY3sxfXs4XFxwaV4yfUFeMiJdLFs0LDUsIlxcaW90YSJdLFs1LDJdLFsyLDYsIlxcZXFyZWZ7Ym90dHNfdGhlb3JlbTJ9Il0sWzYsNywiXFxlcXJlZnsycV9ib3R0fSIsMCx7ImNvbG91ciI6WzAsNjAsNjBdfSxbMCw2MCw2MCwxXV0sWzQsNywiXFxkIiwwLHsib2Zmc2V0IjotMiwiY3VydmUiOi0zfV0sWzExLDEyLCIiLDIseyJzdHlsZSI6eyJ0YWlsIjp7Im5hbWUiOiJtYXBzIHRvIn19fV0sWzEyLDksIj8iLDAseyJjb2xvdXIiOlswLDYwLDYwXSwic3R5bGUiOnsidGFpbCI6eyJuYW1lIjoibWFwcyB0byJ9fX0sWzAsNjAsNjAsMV1dLFsxMCwzLCIiLDAseyJzdHlsZSI6eyJ0YWlsIjp7Im5hbWUiOiJtYXBzIHRvIn19fV1d
\begin{equation}
    \label{omega1_to_omega2_big_diagram}
\begin{tikzcd}[column sep=2em, row sep=1em]
	{H^2(\Bdot\R;\Omega^1)} & {H^4(\Bdot\R;\Z(2))} & {H^4(\Bdot\R;\R(2))} & {\Inv[2](\R)} & {H^2(\Bdot\R;\Omega^2)} \\
	{\frac{1}{8\pi^2} x_1\ud x_2} & {} & {} && {\frac{1}{8\pi^2}\ud x_1 \ud x_2} \\
	& {\hat{p}_1} & {} & {-\frac{1}{8\pi^2}A^2} & {\frac{1}{8\pi^2}\ud x_1\ud x_2}
	\arrow["\iota", from=1-1, to=1-2]
	\arrow[from=1-2, to=1-3]
	\arrow["{\eqref{botts_theorem2}}" swap, from=1-3, to=1-4]
	\arrow["{\eqref{2q_bott}}" swap, color={rgb,255:red,214;green,92;blue,92}, from=1-4, to=1-5]
	\arrow["\d", shift left=2, curve={height=-18pt}, from=1-1, to=1-5]
	\arrow[maps to, from=3-2, to=3-4]
	\arrow["{?}", color={rgb,255:red,214;green,92;blue,92}, maps to, from=3-4, to=3-5]
	\arrow[maps to, from=2-1, to=2-5]
\end{tikzcd}
\end{equation}

It thus suffices to show that the Bott isomorphism~\eqref{2q_bott} indeed sends $-\frac{1}{8\pi^2}A^2\mapsto \frac{1}{8\pi^2}\ud x_1\ud x_2$. 
In that case, commutativity of the maps in diagram~\eqref{omega1_to_omega2_big_diagram} implies that $\iota$ maps
$-\frac{1}{8\pi^2}A^2 \mapsto \hat{p}_1$.
The Bott isomorphism is normalized to match Chern-Weil theory (see~\cref{botts_theorem2}).
The maps fit together as indicated in diagram~\eqref{ChernWeilAndBottDiagram}, which is natural in $G$.
% https://q.uiver.app/?q=WzAsNCxbMCwwLCJIXnsybn0oXFxCZG90RyxcXFIobikpIl0sWzAsMSwiSF57Mm59KEJHLFxcUikiXSxbMSwwLCJIXm4oXFxCZG90RyxcXE9tZWdhXm4pIl0sWzEsMSwieyhTeW1ebiBcXGZnXlxcdmVlKX1eRyJdLFswLDFdLFszLDIsIkJvdHR+XFxlcXJlZnsycV9ib3R0fSIsMl0sWzAsMl0sWzMsMSwiQ1ciXSxbMywwLCJcXHBoaSJdXQ==
\begin{equation}
\label{ChernWeilAndBottDiagram}
\begin{tikzcd}
	{H^{2n}(\BdotG;\R(n))} & {H^n(\BdotG;\Omega^n)} \\
	{H^{2n}(BG;\R)} & {\Inv[n](G)}
	\arrow[from=1-1, to=2-1]
	\arrow["{\mathit{Bott}~\eqref{2q_bott}}"', from=2-2, to=1-2]
	\arrow[from=1-1, to=1-2]
	\arrow["\ChW", from=2-2, to=2-1]
	\arrow["\phi", from=2-2, to=1-1]
\end{tikzcd}
\end{equation}

It suffices to consider $G = \U_1$.
We determine the normalization on the first Chern class $c_1$ in cohomological degree 2. 
Then we use multiplicativity of the Chern-Weil and Bott maps 
to deduce the normalization in degree 4, where $\hat{p}_1$ lives.
The groups containing $c_1 \in H^2(B\U_1;\Z)$ and its unique 
lift (provided by~\cref{compact_off_diag}) $\hat{c}_1 \in H^2(\Bdot\U_1;\Z(1))$ 
fit into the relevant diagram as follows:
% https://q.uiver.app/?q=WzAsNixbMCwwLCJIXnsyfShcXEJkb3RcXFVfMSxcXFooMSkpIl0sWzEsMCwiSF57Mn0oXFxCZG90XFxVXzEsXFxSKDEpKSJdLFsxLDEsIkheezJ9KEJcXFVfMSxcXFIpIl0sWzAsMSwiSF57Mn0oQlxcVV8xLFxcWikiXSxbMiwwLCJIXjEoXFxCZG90XFxVXzEsXFxPbWVnYV4xKSJdLFsyLDEsInsoU3ltXjEgXFxmdV8xXlxcdmVlKX1ee1xcVV8xfSJdLFswLDEsIiIsMCx7InN0eWxlIjp7InRhaWwiOnsibmFtZSI6Imhvb2siLCJzaWRlIjoidG9wIn19fV0sWzMsMiwiIiwwLHsic3R5bGUiOnsidGFpbCI6eyJuYW1lIjoiaG9vayIsInNpZGUiOiJ0b3AifX19XSxbMCwzXSxbMSwyXSxbNSw0LCJCb3R0flxcZXFyZWZ7MnFfYm90dH0iLDJdLFsxLDRdLFs1LDIsIkNXIl0sWzUsMSwiXFxwaGkiXV0=
\begin{equation}
\label{U1ChernWeilAndBottDiagram}
\begin{tikzcd}
	{H^{2}(\Bdot\U_1;\Z(1))} & {H^{2}(\Bdot\U_1;\R(1))} & {H^1(\Bdot\U_1;\Omega^1)} \\
	{H^{2}(B\U_1;\Z)} & {H^{2}(B\U_1;\R)} & {\Inv[1](\U_1)}.
	\arrow[hook, from=1-1, to=1-2]
	\arrow[hook, from=2-1, to=2-2]
	\arrow["\cong", from=1-1, to=2-1]
	\arrow["\cong", from=1-2, to=2-2]
	\arrow["{\mathit{Bott}~\eqref{2q_bott}}"', from=2-3, to=1-3]
	\arrow[from=1-2, to=1-3]
	\arrow["\ChW", from=2-3, to=2-2]
	\arrow["\phi", from=2-3, to=1-2]
\end{tikzcd}
\end{equation}
We identify the Lie algebra of $\U_1$ as $\fu_1 = i\R$ via the exponential map
$\exp(2 \pi i\bl)\colon i\R \to \U_1$.
Chern-Weil theory associates to $c_1$ the polynomial 
$\frac{1}{2 \pi i}\mathrm{id}\colon i\R \to \R$. 
The class $c_1$ generates $H^2(B\U_1;\Z)$, and thus its lift $\hat{c}_1$ generates
$H^2(B\U_1;\Z(1)) \simeq H^1(B\U_1;\T)$. The group $H^1(B\U_1;\T)$ is the group
of homomorphisms $\U_1 \to \T$ (this is a general fact about Lie
group cohomology, see e.g.~\cite{WW15}).
Under this identification, $\hat{c}_1$ corresponds to the 
homomorphism $\chi_1\colon \U_1 \xrightarrow{\mathrm{id}} \U_1 = \T$.

The composition of maps in the top row of~\eqref{U1ChernWeilAndBottDiagram}
is induced by the map of sheaves $\d_0\colon \Z(1) \to \Omega^1[-1]$, given by $\d$ on
the $\Omega^0$-term in $\Z(1)$, and $0$ on the other terms.
This map factors through the sheaf $\T$ as
\begin{equation}
    \begin{tikzcd}[column sep=4em]
        \Z(1) \rar["\exp(2\pi i -)"] & \T[-1] \rar["\frac{1}{2\pi i} \ud \log(-)"] & \Omega^1[-1].
    \end{tikzcd}
\end{equation}
The image of $\hat{c}_1$ in
$H^1(\Bdot\U_1;\Omega^1)$ is the image of $\chi_1$ under the latter map.
We pick a representing 1-form
\begin{equation}
    \ud \theta \coloneqq \frac{1}{2\pi i} \ud \log(\chi_1) \in \Omega^1(\U_1).
\end{equation}

Now we use naturality of the Bott isomorphism to conclude the normalization for the group $\R$.
We identify the Lie algebra of $\R$ as $\fR = \R$, with exponential given by the
identity map ${\mathrm{id}}_\R:\R \to \R$.
The covering map $\exp(2\pi i-): \R \to \U_1$ induces the map $2\pi i$ on Lie algebras:
$\d_{e} \exp(2 \pi i -): \fR = \R \xrightarrow{2 \pi i} i\R = \fu_1$. 
As a result, the induced map on the degree-one polynomials on the Lie algebras is the transpose
of this linear map, $2 \pi i: \fu_1^\vee = i\R \to \R = \fR^\vee$.
This is the left-hand map in the naturality square for the Bott isomorphism:
\begin{equation}
   \begin{tikzcd}
	\Inv[1](\U_1)=i\R^\vee=i\R & H^1(\Bdot\U_1; \Omega^1) \\
	\Inv[1](\R)=\R^\vee=\R & H^1(\Bdot\R; \Omega^1).
	\arrow[from=1-2, to=2-2]
	\arrow["\mathit{Bott}", from=1-1, to=1-2]
	\arrow["\mathit{Bott}", from=2-1, to=2-2]
	\arrow[from=1-1, to=2-1]
\end{tikzcd}
\end{equation}
We compute the right-hand map $H^1(\Bdot\U_1;\Omega^1) \to H^1(\Bdot\R;\Omega^1)$ by pulling
back the representing differential forms. The form $\ud \theta \in \Omega^1(\U_1)$ pulls back to 
the differential form $\ud x \in \Omega^1(\R)$, the unique right-invariant differential form 
on $\R$ whose restriction to the Lie algebra is the identity functional 
${\ud x}_e:\fR = \R \xrightarrow{\mathrm{id}_\R} \R$.
The preimage of $\ud \theta$ under the Bott isomorphism is the linear functional
$\frac{1}{2 \pi i} \ud \theta_e: \fu_1 = i\R \xrightarrow{\frac{1}{2\pi i}} \R$.
In summary,
\begin{equation}
   \begin{tikzcd}[arrows={|->}]
	\frac{1}{2\pi i}\ud\theta_e & \d\theta \\
	\ud x_e & \ud x,
	\arrow[from=1-2, to=2-2]
	\arrow["\mathit{Bott}", from=1-1, to=1-2]
	\arrow["\mathit{Bott}", from=2-1, to=2-2]
	\arrow[from=1-1, to=2-1]
\end{tikzcd}
\end{equation}
and so the linear functional $\d x_e: \fR \to \R$ is sent to to $\d x \in
\Omega^1(\R)$. Since the Bott isomorphism is multiplicative, we can calculate the
image of $\d x^2$ using the cup product structure on the bigraded ring 
$H^{\ast}(\Bdot\R,\Omega^{\ast})$.\footnote{To clarify, the two gradings are $H^*$ and $\Omega^*$; the simplicial
direction in $\Bdot\R$ does not come into play here.}
This is the classic formula for the cup product of \v Cech cochains with values
in a complex of sheaves with ring structure. In our case, the \v Cech cover is the map 
$\point \to \point/\R$ (with \v Cech nerve the simplicial manifold $\Bdot\R$),
and the complex of sheaves is $\Omega^\bullet$ (with ring structure
given by the wedge of differential forms).
The cochain $\ud x \in \Omega^1(\R)$ lives in bidegree $(1,1)$. 
Its square $\d x^2$ has bidegree $(2,2)$ and is given
by $- p_1^\ast(\d x) p_2^\ast(\d x) = -\d x_1 \ud x_2  \in \Omega^2(\R \times \R)$ 
(see~\cite[\href{https://stacks.math.columbia.edu/tag/01FP}{Tag 01FP}]{stacks-project} 
or~\cite[Expos\'e XVII]{SGA4} for the definition of the cup product).
The distinguished Pontryagin class thus maps to
$-\frac{1}{8\pi^2}{\ud x_e}^2 \mapsto \frac{1}{8\pi^2} \ud x_1 \ud x_2 \in \Omega^2(\R^2)$.
\end{proof}

%\section{Main Statement}
\section{Computing the transgression}
\label{sec:main_statement}
    Recall $\GL_1^+(\R)=\R_+^\times$ and $\Gamma=\Diff$. 
We identify $\GL_1^+(\R)$ with $\R$ via the isomorphism $\log: \R_+^\times \to \R$.
Let $F = \mathrm{Fr}^+(S^1) \to S^1$ be the oriented frame bundle of $S^1$. This is a trivial $\GL_1^+(\R)$-bundle:
it is canonically identified with $S^1\times\R_+^\times \to S^1$.
The action of $\Gamma$ on $S^1$ is orientation-preserving, and thus lifts
to an action on $F$.
This $\Gamma$-action commutes with the $\R$-action on $F$. 
We think of $\Gamma$ as acting on $F$ on the left and $\R$ as acting on the right. 
The double quotient stack $\GFR$ fits into the diagram
\begin{equation}
\label{eqn:transgression_space_diagram}
\begin{tikzcd}
\GFR \arrow[r, "q"] \arrow[d, "p"]
& \Bdot \R  \\
\Bdot\Gamma.
& 
\end{tikzcd}
\end{equation}
Note that $F/\R = S^1$. Under this identification, 
$\GFR = \Gamma \backslash S^1 \to \Bdot\Gamma$ is the tautological 
oriented $S^1$-bundle over $\Bdot\Gamma$.
We will compute the transgression map 
$p_\ast q^\ast: H^4(\Bdot \R; \Z(2)) \to H^3(\Bdot\Gamma; \Z(1))$ associated to~\eqref{eqn:transgression_space_diagram}. The pushforward $p_\ast$
is fiber integration in an $S^1$-bundle, so we denote it by $\int_{S^1}$.

In \S\ref{sec:off_diagonal}, we constructed differential lifts of the first Pontryagin class in
$H^4(\Bdot\R;\Z(2))$, and in \cref{lem:Tn_equiv_Zn}, we saw that $H^3(\Bdot\Gamma;\Z(1)) = H^2(\Bdot\Gamma;
\underline{\T})$ classifies Fréchet Lie group central extensions of $\Gamma$ by $\T$. Therefore applying this
transgression map turns a differential lift of $p_1$ into a central extension of $\Gamma$ by $\T$.
% Recall from \cref{segal_cext} that the group of such central extensions splits as
% %By~\cref{lem:Tn_equiv_Zn}, $H^3(\Bdot\Gamma, \Z(1)) = H^2(\Bdot\Gamma,\underline\T)$,
% %the group classifying central extensions of $\Gamma$ by $\T$.
% %As recalled in Section~\ref{sec:central_extensions}, this group is naturally identified 
% %with the product
% \begin{equation}
%     H^2(\Bdot\Gamma; \underline\T) \overset\cong\to H^2(\Bdot\PSL_2(\R); \underline\T) 
%     \times H^2(\Witt; \R).
% \end{equation}
% where $\Witt$ is the Witt algebra; the Virasoro family of central extensions are those in the subgroup $0\times
% H^2(\Witt;\R)$. The \emph{Bott-Thurston cocycle} $B_\lambda\colon \Gamma \times \Gamma \to \T$, which we defined
% in~\eqref{bcocyc},
% provides an explicit representative of the class $[\AlgVir_\lambda] \in 0 \times H^2(\Witt; \R) \subset 
% H^2(\Bdot\Gamma; \underline\T)$. 

\begin{theorem}
\label{main_thm}
The transgression homomorphism 
\begin{equation}
    \int_{S^1} \circ \ q^\ast\colon H^4(\Bdot\R; \Z(2)) \longrightarrow H^2(\Bdot\Gamma; \Z(1))
\end{equation}
lands isomorphically in the Virasoro family $\Vir \subset \CExt_\R(\Gamma) \cong H^2(\Bdot\Gamma; \Z(1))$.
It induces an isomorphism
\begin{equation}
        \Inv[2](\R) \longrightarrow \Vir
\end{equation}
and takes the distinguished first Pontryagin class $\hat p_1$ (\cref{distinguishedlifts}) to the Virasoro central extension $\tGam_{-12}$. That is, the
Virasoro group obtained by transgressing $\hat p_1$ has central charge $-12$.
\end{theorem}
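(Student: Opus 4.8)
The plan is to compute the transgression entirely with differential forms and then recognize the Bott--Thurston cocycle. By \cref{prop:cocycle}, the distinguished lift is $\hat p_1=\iota(\omega)$ for the $\Omega^1$-valued cocycle $\omega\coloneqq\tfrac{1}{8\pi^2}x_1\ud x_2\in\Omega^1(\R^2)$ representing a class in $H^2(\Bdot\R;\Omega^1)$. Since $\iota$ is the natural map $\varphi\colon\Omega^1[-2]\to\Z(2)$ and $q^\ast$ commutes with it, \cref{fibcomm} gives $\int_{S^1}q^\ast\hat p_1=\varphi\bigl(\int_{S^1}q^\ast\omega\bigr)$, where $\int_{S^1}q^\ast\omega$ is computed in $H^2(\Bdot\Gamma;\Omega^0)$ and the final map $\varphi\colon\Omega^0[-1]\to\Z(1)$ is, under $\Z(1)\simeq\underline\T[-1]$ (\cref{lem:Tn_equiv_Zn}), the passage $c\mapsto\exp(2\pi i\,c)$ from an $\R$-valued group $2$-cochain to a $\T$-valued one. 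So it suffices to compute the $\R$-valued cochain $\int_{S^1}q^\ast\omega$ and match constants.

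The computation is cleanest in the $\Gamma$-simplicial presentation of $E=\GFR$ as the action groupoid $\Gamma\backslash S^1$, with $p$-simplices $\Gamma^p\times S^1$ and $S^1=F/\R$ the fiber of $p$. In this model the classifying map $q$ of the principal $\R$-bundle $\Gamma\backslash F\to E$ is the strict simplicial map determined by the groupoid $\R$-cocycle assigning to a morphism $(\gamma,s)$ the value $\log\gamma'(s)$; the chain rule $\log(\gamma_1\circ\gamma_2)'=\log(\gamma_1'\circ\gamma_2)+\log\gamma_2'$ is precisely the additive cocycle identity, so on $2$-simplices $q$ sends $(\gamma_1,\gamma_2,s)\mapsto\bigl(\log(\gamma_1'\circ\gamma_2)(s),\,\log\gamma_2'(s)\bigr)\in\R^2$. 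Pulling back $\omega$ therefore gives the $\Gamma$-degree-$2$ cocycle
\begin{equation}
    q^\ast\omega=\frac{1}{8\pi^2}\,\log(\gamma_1'\circ\gamma_2)\,\d\bigl(\log\gamma_2'\bigr)\in\Omega^1(\Gamma^2\times S^1).
\end{equation}
The nontrivial point is to justify that $q$ really is this simplicial cocycle and that pulling back along it computes $q^\ast[\omega]$ on the nose; this is exactly what the bisimplicial double quotient $\Gamma\backslash F/\R$ is for, interpolating between the $\R$-resolved description of $\omega$, where it naturally lives in bidegree $(0,2)$, and the $\Gamma$-resolved description above. Carrying out this identification is the main obstacle; the remaining integration is formal.

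Fiber-integrating the $\d(\log\gamma_2')$ term over the $S^1=F/\R$ fiber yields the $\R$-valued group $2$-cochain
\begin{equation}
    \int_{S^1}q^\ast\omega=\frac{1}{8\pi^2}\int_{S^1}\log(\gamma_1'\circ\gamma_2)\ud(\log\gamma_2')\in H^2(\Bdot\Gamma;\Omega^0),
\end{equation}
which is exactly the integrand of the Bott--Thurston cocycle. Comparing with $B_{\lambda,\R}=-\tfrac{\lambda}{96\pi^2}\int_{S^1}\log(\gamma_1'\circ\gamma_2)\ud(\log\gamma_2')$ from \cref{vira-from-R-extension} forces $\lambda=-12$, and applying $\varphi$ (equivalently $\exp(2\pi i\,\cdot)$) produces the $\T$-valued cocycle $B_{-12}$ defining $\tGam_{-12}$; in particular the image is a Virasoro extension by the very definition of these cocycles, so it lies in $\Vir$ with no separate restriction to $\PSL_2(\R)$ needed. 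Finally the construction is $\R$-linear in $\omega$, so scaling $\hat p_1$ by $c\in\R$ transgresses to $\tGam_{-12c}$; hence the transgression sends the real line $H^4(\Bdot\R;\Z(2))\cong\Inv[2](\R)$ isomorphically onto $\Vir$ and carries $\hat p_1$ to central charge $-12$.
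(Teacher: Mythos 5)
Your overall strategy is the same as the paper's: reduce to $\Omega^1$-valued cocycles via \cref{prop:cocycle} and \cref{fibcomm}, identify the transgressed cocycle with the Bott--Thurston integrand, integrate over the fiber, and match the constant $\tfrac{1}{8\pi^2}=-\tfrac{\lambda}{96\pi^2}$ to get $\lambda=-12$. Your endgame is also right: because the transgression is exhibited literally as a Bott--Thurston cocycle, membership in $\Vir$ is immediate, and $\R$-linearity plus one-dimensionality of $\Inv[2](\R)$ gives the isomorphism onto $\Vir$. The observation that the classifying map of $\Gamma\backslash F\to\Gamma\backslash S^1$ should be governed by the additive cocycle $\log\gamma'$, with the chain rule $\log(\gamma_1\circ\gamma_2)'=\log(\gamma_1'\circ\gamma_2)+\log\gamma_2'$ playing the role of the cocycle identity, is a good conceptual explanation of why the Bott--Thurston formula has to come out.

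However, the step you flag as ``the main obstacle'' is in fact the entire technical content of the theorem, and you do not carry it out. The class $q^*[\omega]$ is defined using the presentation of $q\colon\GFR\to\Bdot\R$ that resolves the $\R$-action (so $\omega$ sits in bidegree $(0,2)$ on $F\times\R^2$), whereas the fiber integration requires a representative in the presentation resolving the $\Gamma$-action (bidegree $(2,0)$ on $\Gamma^2\times F$). Asserting that $q$ ``is'' the strict simplicial map $(\gamma_1,\gamma_2,s)\mapsto(\log(\gamma_1'\circ\gamma_2)(s),\log\gamma_2'(s))$ presupposes a trivialization $F\cong S^1\times\R_+^\times$ and an identification of two a priori different presentations of the same map of stacks; what must actually be verified is that the two cocycles are cohomologous in the total complex of the bisimplicial object $\GFR_{p,q}=\Gamma^p\times F\times\R^q$. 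The paper does this in \cref{main_lemma} by producing an explicit total-degree-$2$ cochain
\begin{equation}
\beta=\bigl(-\log(v)\ud x,\ \log(\gamma')\ud\log(v)\bigr)\in\Omega^1(F\times\R)\oplus\Omega^1(\Gamma\times F),
\end{equation}
whose horizontal and vertical differentials are checked termwise against the face maps to give $\d\beta=z_2-z_1$ with $z_1=(x_1\ud x_2,0,0)$ and $z_2=(0,0,\log(\gamma_1'\circ\gamma_2)\ud\log(\gamma_2'))$. Note that $\beta$ visibly depends on the trivialization (through $\log(v)$), which is exactly the bookkeeping your heuristic elides. Without exhibiting such a $\beta$ (or an equivalent zig-zag through the double complex), the identification of $\int_{S^1}q^*\omega$ with the Bott--Thurston cochain --- and hence the central charge computation --- remains unproved.
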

\begin{rem}
Recall from \cref{bosonic} that we choose our normalization so that $\GpVir_1$ has central charge 1, that is, it
is the central extension that acts on the bosonic string CFT.
\end{rem}

We begin by reducing to a computation on differential forms. By \cref{prop:cocycle}, the map $H^2(\Bdot \R,
\Omega^1) \to H^4(\Bdot \R; \Z(2))$ is an isomorphism, and the preimage of $\hat{p}_1$ is $\frac{1}{8\pi^2} x_1 \ud x_2 \in \Omega^1(\R^2)$. Functoriality of pullback and integration imply the following commutative diagram (\cref{fibcomm}): 
\begin{equation}
\begin{tikzcd} \label{reduction-to-diff-form}
    H^2(\Bdot\R;\Omega^1) \ar[r,"q^\ast"] \ar[d, "\cong"] &
    H^2(\GFG; \Omega^1) \ar[r,"{\int_{S^1}}"] \ar[d] &
    H^2(\Bdot \Gamma;\Omega^0) \ar[d, "\phi"] \\
    H^4(\Bdot\R;\Z(2)) \ar[r,"q^\ast"] &
    H^4(\GFG; \Z(2)) \ar[r,"{\int_{S^1}}"] &
    H^3(\Bdot \Gamma;\Z(1)).
\end{tikzcd}
\end{equation}
The composition
\begin{equation}
	H^2(\Bdot\Gamma;\Omega^0)\overset\phi\longrightarrow H^3(\Bdot\Gamma;\Z(1))\overset\cong\longrightarrow
	H^2(\Bdot\Gamma; \underline\T)
\end{equation}
is the map induced by 
\begin{equation}
    \exp(2 \pi i -): \Omega^0 = \underline\R \to \underline \T.
\end{equation}
Therefore we can compute the transgression using the top line of the diagram~\eqref{reduction-to-diff-form}, where
we have the advantage of working with differential forms, and then exponentiate to get back to
$H^3(\Bdot\Gamma;\Z(1))$.\footnote{This last step is a map $H^2(\Bdot\Gamma;\underline\R)\to
H^2(\Bdot\Gamma;\underline\T)$ and thus can be interpreted as taking a central extension of $\Gamma$ by $\R$ and
building a central extension by $\T$.} In particular, to prove \cref{main_thm}, it suffices to show the following:
\begin{prop}
\label{main_prop}
The transgression map 
$\int_{S^1} \circ \ q^\ast: H^2(\Bdot\R;\Omega^1) \to H^2(\Bdot\Gamma;\Omega^0)$ maps the class $[x_1\ud x_2]$
to the class of the central extension $\R \to \GpVir_\R \to \Gamma$ corresponding to the unnormalized $\R$-valued Bott-Thurston 
cocycle\footnote{This ``unnormalized'' cocycle corresponds to the case $\lambda=-96 \pi^2$ in~\eqref{real-cocycle}.} 
\begin{equation}
\label{bott_R}
   B_\R(\gamma_1, \gamma_2)\coloneqq \int_{S^1} \log(\gamma_1'\circ\gamma_2) \ud(\log(\gamma_2')).
\end{equation}
\end{prop}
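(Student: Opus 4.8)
The plan is to compute the transgression entirely in differential forms, by resolving the double quotient $\GFR$ by a bisimplicial manifold and chasing $[x_1\ud x_2]$ through the resulting double complex. First I would present $\GFR$ by the bisimplicial manifold with $(p,q)$-simplices $\Gamma^{p}\times F\times\R^{q}$, the left $\Gamma$-faces implementing the action on $F$ and the right $\R$-faces the fiberwise translation. Writing $F\cong S^1\times\R$ with fiber coordinate $r$ (the logarithm of the frame), the left action is $\gamma\cdot(\theta,r)=(\gamma\theta,\,r+\log\gamma'(\theta))$ and the right action is $(\theta,r)\cdot t=(\theta,r+t)$. This is the essential geometric input: it is exactly the $\log\gamma'$ shift that couples the $\R$-direction to the $\Gamma$-direction. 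By \cref{OmegaCohomologyOfBdotG}, $H^{*}(\GFR;\Omega^1)$ is computed by the total complex of the bicomplex $\Omega^1(\Gamma^{\bullet}\times F\times\R^{\bullet})$ with differentials $\delta_\Gamma$ and $\delta_\R$; there $q^{*}[x_1\ud x_2]$ is the cochain $\alpha_{0,2}=x_1\ud x_2$ pulled back from the $\R^2$-factor, in bidegree $(0,2)$. The goal is to replace $\alpha_{0,2}$ by a cohomologous total cocycle in bidegree $(2,0)$ that is basic for the $\R$-action, hence descends along $F\to F/\R=S^1$.

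The heart of the argument is a two-step zig-zag. I would first solve $\delta_\R\gamma_{0,1}=\alpha_{0,2}$ with the explicit primitive $\gamma_{0,1}=r\ud t$; such a primitive exists on $F\times\R$ because the frame coordinate $r$ trivializes the pullback, even though $[x_1\ud x_2]$ is nonzero over $\Bdot\R$. Applying $\delta_\Gamma$ and using $d_0^{*}r=r+\log\gamma'(\theta)$ moves $\alpha_{0,2}$, up to sign and total coboundaries, to the bidegree-$(1,1)$ cochain $\log\gamma'(\theta)\ud t$, which is now independent of $r$. Next I would solve $\delta_\R\gamma_{1,0}=\log\gamma'(\theta)\ud t$ with $\gamma_{1,0}=\log\gamma'(\theta)\ud r$, and apply $\delta_\Gamma$ a second time. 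Expanding the three $\Gamma$-faces and using the cochain identity $\log((\gamma_1\circ\gamma_2)')=\log(\gamma_1'\circ\gamma_2)+\log\gamma_2'$, all of the $\ud r$-terms cancel identically, leaving the bidegree-$(2,0)$ representative
\[
\beta_{2,0}=\log(\gamma_1'\circ\gamma_2)\,\ud(\log\gamma_2')\in\Omega^1(\Gamma^2\times F).
\]
This cancellation is the payoff of the chase: $\beta_{2,0}$ no longer involves $r$, so it is $\R$-basic and descends to $\Gamma^2\times S^1$.

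Finally I would fiber-integrate $\beta_{2,0}$ over $S^1=F/\R$, which by \cref{fibcomm} computes $p_\ast$ on this model. Only the $\ud\theta$-component contributes, giving the function
\[
(\gamma_1,\gamma_2)\longmapsto\int_{S^1}\log(\gamma_1'\circ\gamma_2)\,\ud(\log\gamma_2')=B_\R(\gamma_1,\gamma_2),
\]
which is precisely the claimed class in $H^2(\Bdot\Gamma;\Omega^0)$. I expect the difficulties to be bookkeeping rather than conceptual: pinning down a sign convention for the total differential so that the two zig-zag steps genuinely exhibit $\alpha_{0,2}$ and $\beta_{2,0}$ as cohomologous; confirming that $\alpha_{0,2}$ is a total cocycle (so that the chase is well posed, which holds because the $\Gamma$-faces act trivially on a form pulled back from the $\R^2$-factor); and checking that this $\Omega^1$-model and its fiber integration match the maps appearing in \cref{main_prop}. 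The genuinely delicate step is to treat $\ud(\log\gamma_2')$ correctly as a form on $\Gamma^2\times F$, separating its $S^1$-direction from its $\Gamma$-directions, so that fiber integration extracts exactly the Bott--Thurston integrand and nothing more.
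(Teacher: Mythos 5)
Your proposal is correct and follows essentially the same route as the paper: the bisimplicial resolution $\Gamma^p\times F\times\R^q$, the two-step zig-zag with primitives $r\ud t=\log(v)\ud x$ and $\log\gamma'\ud r=\log(\gamma')\ud\log(v)$ (this is exactly the coboundary $\beta$ of \cref{main_lemma}), the cancellation via $\log((\gamma_1\circ\gamma_2)')=\log(\gamma_1'\circ\gamma_2)+\log\gamma_2'$, and levelwise fiber integration over $S^1=F/\R$. The only differences are notational (your $r,t$ versus the paper's $\log v, x$) and the overall sign bookkeeping, which you correctly flag as the remaining routine check.
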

We compute the map explicitly on cocycles, using simplicial presentations of the spaces and stacks involved. Recall that elements of $\Omega^1(\R^2)$ are cochains for $H^2(\Bdot\R;\Omega^1)$ with respect to the following simplicial presentation of $\Bdot \R$:
\begin{equation}
\begin{tikzcd}[arrows={-Stealth}]
	\point & 
	\ar[l,shift left=0.15em] \ar[l,shift right=0.15em]  \R &
	\ar[l,shift left=0.3em] \ar[l] \ar[l,shift right=0.3em] \R\times \R &
	\ar[l,shift left=0.45em] \ar[l,shift left=0.15em]
	\ar[l,shift right=0.15em] \ar[l,shift right=0.45em] \R\times \R\times \R \cdots.
\end{tikzcd}
\end{equation}

We compute the pullback of $x_1\ud x_2 \in \Omega^1(\R^2)$ to $\GFR$ using the following simplicial 
presentation of the map $\GFR \to \Bdot\R$:
\begin{equation}
\label{pullback_big_diagram}
\begin{tikzcd}[arrows={-Stealth}]
	\ar[d] \GF & 
	\ar[d] \ar[l,shift left=0.15em] \ar[l,shift right=0.15em]  \GF \times \R&
	\ar[d] \ar[l,shift left=0.3em] \ar[l] \ar[l,shift right=0.3em] \GF\times \R \times \R &
	\ar[d] \ar[l,shift left=0.45em] \ar[l,shift left=0.15em]
	\ar[l,shift right=0.15em] \ar[l,shift right=0.45em] \GF\times \R\times \R \times \R \dots \\
	\point & 
	\ar[l,shift left=0.15em] \ar[l,shift right=0.15em]  \R &
	\ar[l,shift left=0.3em] \ar[l] \ar[l,shift right=0.3em] \R\times \R &
	\ar[l,shift left=0.45em] \ar[l,shift left=0.15em]
	\ar[l,shift right=0.15em] \ar[l,shift right=0.45em] \R\times \R\times \R \dots
\end{tikzcd}
\end{equation}
where the vertical maps simply forget the first factor.
As a result, the image of $x_1 \ud x_2$ under the pullback map 
$p^*: \Omega^1(\Bdot\R) \rightarrow \Omega^1((\Gamma \backslash \F)/\R_\bullet)$ 
is constant along the factor $\Gamma \backslash F$. We denote the 
cocycle $p^*(x_1 \ud x_2)$ by the same symbol, 
$x_1 \ud x_2 \in \Omega^1(\Gamma \backslash \F \times \R^2)$. 

To compute the pushforward of this class along the projection
$\GFR \to \Bdot\Gamma$, we pick a different presentation
of $\GFR$. Instead of resolving the $\R$-action, we resolve the $\Gamma$-action:
\begin{equation}
\begin{tikzcd}[arrows={-Stealth}]
	F/\R & 
	\ar[l,shift left=0.15em] \ar[l,shift right=0.15em]  \Gamma \times F/\R &
	\ar[l,shift left=0.3em] \ar[l] \ar[l,shift right=0.3em] \Gamma \times \Gamma \times F/\R &
	\ar[l,shift left=0.45em] \ar[l,shift left=0.15em]
	\ar[l,shift right=0.15em] \ar[l,shift right=0.45em] \Gamma \times \Gamma \times \Gamma \times F/\R \dots.
\end{tikzcd}
\end{equation}

The map $q\colon \GFR \rightarrow \Bdot \Gamma$ admits a presentation by the simplicial map
\begin{equation}
\label{pushforward_big_diagram}
\begin{tikzcd}[arrows={-Stealth}]
	\dar F/\R & 
	\dar \ar[l,shift left=0.15em] \ar[l,shift right=0.15em]  \Gamma \times F/\R &
	\dar \ar[l,shift left=0.3em] \ar[l] \ar[l,shift right=0.3em] \Gamma \times \Gamma \times F/\R &
	\dar \ar[l,shift left=0.45em] \ar[l,shift left=0.15em]
	\ar[l,shift right=0.15em] \ar[l,shift right=0.45em] \Gamma \times \Gamma \times \Gamma  \times F/\R \dots. \\
	\point & 
	\ar[l,shift left=0.15em] \ar[l,shift right=0.15em]  \Gamma &
	\ar[l,shift left=0.3em] \ar[l] \ar[l,shift right=0.3em] \Gamma\times \Gamma &
	\ar[l,shift left=0.45em] \ar[l,shift left=0.15em]
	\ar[l,shift right=0.15em] \ar[l,shift right=0.45em] \Gamma\times \Gamma\times \Gamma \dots
\end{tikzcd}
\end{equation}
which is a level-wise $S^1$-fibration.
As a result, $p_*\colon \Omega^1(\Gamma \backslash (F/\R)_\bullet) \rightarrow
\Omega^1(\Bdot\Gamma)$ may be computed by integrating over $S^1$ level-by-level.

In~\cref{main_lemma}, we will prove that $x_1 \ud x_2 \in \Omega^1(\Gamma \backslash \F \times \R^2)$ 
is cohomologous to the cocycle given by the integrand of the $\R$-valued Bott-Thurston 
cocycle~\eqref{bott_R}, $\log(\gamma_1'\circ\gamma_2) \ud(\log(\gamma_2'))$. This is enough to imply
\cref{main_prop}: we saw above using~\eqref{pullback_big_diagram} that $x_1\ud x_2\in\Omega^1(\Gamma\backslash
(F/\R)_\bullet)$ represents the cohomology class $q^*([x_1\ud x_2])$. \Cref{main_lemma} will show that
$\log(\gamma_1'\circ\gamma_2)\ud(\log(\gamma_2'))$ represents the same cohomology class; thus we can use the
latter cocycle to compute the pushforward, and using~\eqref{pushforward_big_diagram}, the pushforward will be
$\int\log(\gamma_1'\circ\gamma_2)\ud(\log(\gamma_2'))$, proving \cref{main_prop}. So all we have left to do is
prove \cref{main_lemma}.
%\CW{Insert a short paragraph here about how this implies~\cref{main_prop}. 
%Currently, the paper doesn't explicitly say that.}

The challenge is transporting our cocycle from the first simplicial presentation of $\GFR$ (where we resolve in the
$\R$-direction) to the second (where we resolve in the $\Gamma$-direction). To do so, we will chase it across the
double complex associated to the bisimplicial manifold $\GFR_{\bullet, \bullet}$ obtained by resolving both of
these objects. Specifically, $\GFR_{p,q} = \Gamma^p \times F \times \R^q$:
\begin{equation}
\begin{gathered}
\begin{tikzcd}[column sep=3em,row sep=3em, arrows={-Stealth}]
    &
	\vdots
	\ar[d,shift left=0.45em]  \ar[d,shift left=0.15em]
	\ar[d,shift right=0.15em] \ar[d,shift right=0.45em] &
	\vdots
	\ar[d,shift left=0.45em]  \ar[d,shift left=0.15em]
	\ar[d,shift right=0.15em] \ar[d,shift right=0.45em] &
	\vdots
	\ar[d,shift left=0.45em]  \ar[d,shift left=0.15em]
	\ar[d,shift right=0.15em] \ar[d,shift right=0.45em] 
	\\
    \cdots
	\ar[r,shift left=0.45em]  \ar[r,shift left=0.15em]
	\ar[r,shift right=0.15em] \ar[r,shift right=0.45em] &
	\Gamma^{\times 2} \times F \times \R^{\times 2}
	\ar[d,shift left=0.3em]  \ar[d] \ar[d,shift right=0.3em]
	\ar[r,shift left=0.3em]  \ar[r] \ar[r,shift right=0.3em] &
    \Gamma \times F \times \R^{\times 2}
	\ar[d,shift left=0.3em]  \ar[d] \ar[d,shift right=0.3em]
	\ar[r,shift left=0.15em]  \ar[r,shift right=0.15em] &
	F \times \R^{\times 2}
	\ar[d,shift left=0.3em]  \ar[d] \ar[d,shift right=0.3em]
	\\
    \cdots
	\ar[r,shift left=0.45em]  \ar[r,shift left=0.15em]
	\ar[r,shift right=0.15em] \ar[r,shift right=0.45em] &
	\Gamma^{\times 2} \times F \times \R
	\ar[d,shift left=0.15em]  \ar[d,shift right=0.15em]
	\ar[r,shift left=0.3em]  \ar[r] \ar[r,shift right=0.3em] &
    \Gamma \times F \times \R
	\ar[d,shift left=0.15em]  \ar[d,shift right=0.15em]
	\ar[r,shift left=0.15em]  \ar[r,shift right=0.15em] &
	F \times \R
	\ar[d,shift left=0.15em]  \ar[d,shift right=0.15em]
	\\
    \cdots
	\ar[r,shift left=0.45em]  \ar[r,shift left=0.15em]
	\ar[r,shift right=0.15em] \ar[r,shift right=0.45em] &
	\Gamma^{\times 2} \times F
	\ar[r,shift left=0.3em]  \ar[r] \ar[r,shift right=0.3em] &
    \Gamma \times F 
	\ar[r,shift left=0.15em]  \ar[r,shift right=0.15em] &
	F. 
\end{tikzcd}
\end{gathered}
\end{equation}

(The bisimplicial set is oriented this way to suggest 
the simplicial version of the transgression diagram~\eqref{eqn:transgression_space_diagram}.) 
We view $\GFR_{\bullet, \bullet}$ as a simplicial resolution of $((\Gamma \backslash F)/\R)_\bullet$, 
and separately of ${(\Gamma \backslash (F/\R))}_\bullet$, by projecting to 
the simplicial sets along $p=0$ and $q=0$, respectively.
These projections induce pullback maps 
\begin{equation}
\begin{tikzcd}
\Omega^1(\GFR_{\bullet, \bullet})
& \arrow[l, "f^*"] \Omega^1(((\Gamma \backslash F)/\R)_\bullet)  \\
\Omega^1({\Gamma \backslash (F/\R)}_\bullet) \arrow[u, "g^*"].
& 
\end{tikzcd}
\end{equation}
The degree-$3$ piece of the total complex is 
\begin{equation}
\label{deg3_decomp}
    \Omega^1(F \times \R^{\times 2}) \oplus \Omega^1(\Gamma \times F \times \R) \oplus 
    \Omega^1(\Gamma^{\times 2} \times F).
\end{equation}
We pick an identification $F \simeq S^1 \times \R_+^\times$,
such that the right action by $\R$ is given by the exponential
map $\R \to \R_+^\times$, followed by multiplication.
To describe the differential forms, it is helpful to fix some further notation. 
We use $\gamma$ to denote elements of $\Gamma$,
$(\theta, v)$ for elements of $F$, and
$x$ for elements of $\R$. The action maps are given by
\begin{align}
\begin{split}
    \Gamma \times F &\to F \\
    (\gamma, \theta, v) &\mapsto (\gamma(\theta),\gamma'(\theta) \cdot v)
\end{split}
\end{align}
and
\begin{align}
\begin{split}
    F \times \R &\to F \\
    (\theta, v, x) &\mapsto (\theta, e^x \cdot v).
\end{split}
\end{align}

We are interested in two cocycles.
\begin{enumerate}
	\item The starting point is $f^*(x_1 \ud x_2)$, which in the decomposition~\eqref{deg3_decomp} is
	\begin{equation}
		z_1\coloneqq (x_1 \ud x_2, 0, 0).
	\end{equation}
	\item Our goal is to obtain the pullback of the integrand of the Bott-Thurston cocycle~\eqref{bott_R} under $g^*$.
	In the decomposition~\eqref{deg3_decomp} this is
	\begin{equation}
		z_2\coloneqq (0,0, \log(\gamma_1'\circ\gamma_2) \ud(\log(\gamma_2'))).
	\end{equation}
\end{enumerate}
We will show these two cocycles are cohomologous in the total complex.
\begin{lem}\label{main_lemma} 
The cocycles $z_1$ and $z_2$ are cohomologous, i.e.\ their difference is a coboundary:
\begin{equation}
\label{dbeta}
	z_2 - z_1 = (-x_1 \ud x_2, 0, \log(\gamma_1'\circ\gamma_2) \ud(\log(\gamma_2'))) = d \beta,
\end{equation}
where $\beta = (-\log(v) \ud x,\log(\gamma')\ud  \log(v))$ is a  degree $2$ cocycle in the double complex, with
$-\log(v) \ud x \in \Omega^1(F \times \R)$ and $\log(\gamma')\ud  \log(v) \in \Omega^1(\Gamma \times F)$.
\end{lem}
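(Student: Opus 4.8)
The plan is to verify the coboundary formula~\eqref{dbeta} head-on, by applying the total differential of the double complex to $\beta$ and reading off its three components in the degree-$3$ decomposition~\eqref{deg3_decomp}. Write the total differential on a cochain of bidegree $(p,q)$ as $d = \delta_h + (-1)^p\delta_v$, where $\delta_h$ (resp.\ $\delta_v$) is the alternating sum of pullbacks along the face maps resolving the left $\Gamma$-action (resp.\ the right $\R$-action), as in \cref{X/G}. Since the coefficients form the sheaf $\Omega^1$, the de Rham differential is \emph{not} part of the differential of the complex: the symbol $\ud$ inside our forms is mere notation for $1$-forms. Decompose $\beta = (\beta_{0,1},\beta_{1,0})$ into its bidegree-$(0,1)$ piece $\beta_{0,1} = -\log(v)\ud x$ on $F\times\R$ and its bidegree-$(1,0)$ piece $\beta_{1,0} = \log(\gamma')\ud\log(v)$ on $\Gamma\times F$. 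The only inputs the calculation needs are the two action maps recorded above. The last $\Gamma$-face map sends $(\gamma,\theta,v)\mapsto(\gamma(\theta),\gamma'(\theta)\cdot v)$, whence $\log(v)\mapsto\log(\gamma'(\theta))+\log(v)$; the first $\R$-face map sends $(\theta,v,x)\mapsto(\theta,e^x v)$, whence $\log(v)\mapsto x+\log(v)$. These two logarithmic-additivity identities, together with the chain rule, drive everything.

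I would then compute the three components in turn. For the bidegree-$(0,2)$ piece I pull $\beta_{0,1}$ back along the three vertical faces of $F\times\R^2\to F\times\R$; using $\log(e^{x_1}v)=x_1+\log v$ the $\log v$ terms telescope away, leaving $\delta_v\beta_{0,1} = -x_1\ud x_2$. For the bidegree-$(2,0)$ piece I pull $\beta_{1,0}$ back along the three horizontal faces of $\Gamma^{\times 2}\times F\to\Gamma\times F$ and substitute the chain rule $\log((\gamma_1\circ\gamma_2)') = \log(\gamma_1'\circ\gamma_2)+\log(\gamma_2')$; the $\ud\log(v)$ contributions cancel and the surviving term is exactly the Bott-Thurston integrand $\delta_h\beta_{1,0} = \log(\gamma_1'\circ\gamma_2)\ud\log(\gamma_2')$. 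For the bidegree-$(1,1)$ piece a short calculation gives $\delta_h\beta_{0,1} = \log(\gamma')\ud x$ and $\delta_v\beta_{1,0} = \log(\gamma')\ud x$, so with the Koszul sign the contribution is $\delta_h\beta_{0,1}-\delta_v\beta_{1,0} = 0$. Assembling, $d\beta = (-x_1\ud x_2,\,0,\,\log(\gamma_1'\circ\gamma_2)\ud\log(\gamma_2')) = z_2-z_1$, which is the claim.

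The computation is essentially bookkeeping, so the main obstacle is not depth but keeping conventions straight: one must fix consistent orderings of the face maps for the left- and right-action bar constructions, a consistent sign in the total differential, and the convention that $\gamma'$ is evaluated at the $S^1$-coordinate $\theta$ of the relevant point of $F$. The genuinely load-bearing step is the bidegree-$(2,0)$ calculation: it is precisely because the last horizontal face map acts on the frame coordinate by $v\mapsto\gamma_2'(\theta)\,v$ — shifting $\log v$ by $\log(\gamma_2')$ — that the chain rule converts the simplicial coboundary of $\log(\gamma')\ud\log(v)$ into the cross term $\log(\gamma_1'\circ\gamma_2)\ud\log(\gamma_2')$. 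Confirming the $(1,1)$ cancellation is what certifies that $\beta$ is a true coboundary rather than merely a cochain with the correct end terms, and with that in hand the lemma is complete.
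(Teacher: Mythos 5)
Your proposal is correct and is essentially the paper's own proof: both verify the identity $d\beta = z_2 - z_1$ component by component in the total complex, using additivity of $\log$ under the two action maps for the $(0,2)$ and $(1,1)$ pieces and the chain rule $\log((\gamma_1\circ\gamma_2)') = \log(\gamma_1'\circ\gamma_2) + \log(\gamma_2')$ for the $(2,0)$ piece, with the same cancellation $\dh(-\log(v)\ud x) = \log(\gamma')\ud x = \dv(\log(\gamma')\ud\log(v))$ in the middle bidegree. No gaps; the only difference is cosmetic bookkeeping of signs.
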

See \cref{double_cpx_fig} for a visualization of \cref{main_lemma} and of~\eqref{dbeta} more specifically.
\begin{figure}[h!]
\begin{tikzpicture}
    \draw  node (N20) at (-6,0) {$\Omega^1(\Gamma^{\times 2} \times F)$}
           node (N10) at (-3,0) {$\Omega^1(\Gamma \times F)$}
           node (N11) at (-3,3) {$\Omega^1(\Gamma \times F \times \R)$}
           node (N01) at (0,3)  {$\Omega^1(F \times \R)$}
           node (N02) at (0,6)  {$\Omega^1(F \times \R^{\times 2})$};
    \draw [->] (N10) -- (N20);
    \draw [->] (N10) -- (N11);
    \draw [->] (N01) -- (N11);
    \draw [->] (N01) -- (N02);
    \begin{scope}[shift={(1,-0.5)}]
        \draw  node (C20) at (-6,0) {\hspace{-5em}
               $z_2 = \log(\gamma_1'\circ\gamma_2) \ud\log(\gamma_2')$}
               node (C10) at (-3,0) {$\log(\gamma')\ud  \log(v)$} 
               node (C11) at (-3,3) 
               [align=center]{\\
               $\textcolor{blue}{\log \gamma^\prime \ud x } =$ \\
               $\textcolor{red}{\log \gamma^\prime \ud x }$}
               node (C01) at (0,3)  {$-\log(v) \ud x$}
               node (C02) at (0,6)  {\quad$-z_1 = -x_1 \ud x_2$};
        \draw [|->] (C10) -- (C20);
        \draw [|->,draw=red] (C10) -- (C11);
        \draw [|->,draw=blue] (C01) -- (C11);
        \draw [|->] (C01) -- (C02);
    \end{scope}
\end{tikzpicture}
\caption{A schematic of the proof of \cref{main_lemma}: that if $\beta = (\log(v)\ud x, \log(\gamma')\ud\log(v))$ in
total degree $2$ in the double complex $\Omega^1(\GFR_{\bullet, \bullet})$, then $\d\beta = z_1 - z_2$.}
\label{double_cpx_fig}
\end{figure}

%\label{sec:computation}
    \begin{proof}
Let $\dv$ and $\dh$ be the vertical, resp.\ horizontal, differentials in the total complex. We need to show the
following:
\begin{enumerate}
    \item $\dv (-\log(v) \ud x) = -x_1 \ud x_2$.
    \item $\dh (-\log(v) \ud x) = \log \gamma' \ud x = \dv (\log(\gamma') \ud \log(v))$.
    \item $\dh (\log(\gamma') \ud \log(v)) = \log(\gamma_1'\circ\gamma_2) \ud\log(\gamma_2')$
\end{enumerate}

First we show that 
\begin{equation}
    \dv (-\log(v) \ud x) = -x_1\ud x_2.
\end{equation}
Recall that we have three maps $\dv_0, \dv_1, \dv_2: \F \times \R^2 \rightarrow \F \times \R$, and $\dv = \sum (-1)^i
(\dv_i)^*$:
\begin{equation}
\begin{aligned}
		&\overset{\dv_0}{\rotatebox[origin=B]{20}{$\longmapsto$}}\, (\theta, e^{x_1} v, x_2)\\
	(\theta, v, x_1, x_2) &\overset{\dv_1}{\longmapsto} (\theta, v, x_1 + x_2)\\
		&\overset{\dv_2}{\rotatebox[origin=B]{-20}{$\longmapsto$}}\, (\theta, v, x_1).
\end{aligned}
\end{equation}
Thus 
\begin{equation}
\begin{aligned}
    \dv (-\log(v) \ud x) % &= ((\dv_0)^* - (\dv_1)^* + (\dv_2)^*) (-\log(v) \ud x) \\
    &= -\log(e^{x_1} v) \ud x_2 + \log(v) \ud (x_1 + x_2) - \log(v) \ud x_1 \\
    &= -x_1\ud x_2.
\end{aligned}
\end{equation}

Similarly, $\dh = \sum_i (-1)^i (\dh_i)^*$, and we want to show that
$\dh(-\log(v)\ud x) = \log\gamma'\ud x$.
In this case there are two maps $\dh_0, \dh_1\colon\Gamma\times\F\times\R\to\F\times\R$: 
\begin{equation}
	(\gamma, \theta, v, x)
	\begin{aligned}
		&\overset{\dh_0}{\rotatebox[origin=c]{10}{$\longmapsto$}} 
		(\theta, v, x)\\
		&\underset{\dh_1}{\rotatebox[origin=c]{-10}{$\longmapsto$}} 
		(\gamma(\theta), \gamma'(\theta)v, x).
	\end{aligned}
\end{equation}
Thus
\begin{equation}
\begin{aligned}
    \dh (-\log(v) \ud x) 
    % &= ((\dh_0)^* - (\dh_1)^*) (-\log(v) \ud x) \\ 
    &= - \log(v) \ud x + \log(\gamma' \cdot v) \ud x  \\
    &= \log(\gamma') \ud x.
\end{aligned}
\end{equation}

Next we show that $\dv(\log(\gamma')\ud\log(v)) = \log(\gamma')\ud x$.
Recall $\dv_0, \dv_1\colon \Gamma \times \F \times \R \rightarrow \Gamma \times \F$ are given by
\begin{equation}
	(\gamma, \theta, v, x)
	\begin{aligned}
		&\overset{\dv_0}{\rotatebox[origin=c]{10}{$\longmapsto$}} (\gamma, \theta, e^x v)\\
		&\underset{\dv_1}{\rotatebox[origin=c]{-10}{$\longmapsto$}} (\gamma, \theta, v),
	\end{aligned}
\end{equation}
and $\dv = (\dv_0)^* - (\dv_1)^*$, so

\begin{equation}
\begin{aligned}
    \dv (\log(\gamma') \ud\log(v)) 
    %&= ((\dv_0)^* - (\dv_1)^*) (\log(\gamma') \ud\log(v)) \\ 
    &= \log(\gamma') \ud \log(e^x \cdot v) - \log(\gamma') \ud\log(v) \\
    &= \log(\gamma') \ud x.
\end{aligned}
\end{equation}

Lastly we need to show that 
\begin{equation}
    \dh (\log(\gamma') \ud \log(v)) =  \log(\gamma_1'\circ\gamma_2) \ud\log(\gamma_2').
\end{equation}
We have three maps $\dh_0, \dh_1, \dh_2: \Gamma^2 \times \F \rightarrow \Gamma \times \F$, given by
\begin{equation}
\begin{aligned}
		&\overset{\dh_0}{\rotatebox[origin=B]{20}{$\longmapsto$}}\, 
		(\gamma_2, \theta, v)\\
	(\gamma_1, \gamma_2, \theta, v) &\overset{\dh_1}{\longmapsto} (\gamma_1 \circ \gamma_2, \theta, v)\\
		&\underset{\dh_2}{\rotatebox[origin=B]{-20}{$\longmapsto$}}\, 
		(\gamma_1, \gamma_2(\theta), \gamma_2'(\theta) v),
\end{aligned}
\end{equation}
and $\dh = (\dh_0)^* - (\dh_1)^* + (\dh_2)^*$:
\begin{subequations}
\begin{equation}
    (\dh_0) ^*(\log(\gamma') \ud\log(v)) = \log \gamma_2'(\theta) \ud \log v.
\end{equation}

\begin{equation}
\begin{aligned}
    (\dh_1) ^*(\log(\gamma') \ud\log(v)) &=  \log(\gamma_1 \circ \gamma_2)' (\theta) \ud \log v \\ 
    &= \log(\gamma_1' (\gamma_2(\theta))) \ud \log v+ \log  \gamma_2'(\theta) \ud \log v.
\end{aligned}
\end{equation}

\begin{equation}
\begin{aligned}
    (\dh_2) ^*(\log(\gamma') \ud\log(v)) &= \log  \gamma_1'(\gamma_2(\theta)) \ud (\log (\gamma_2'(\theta) v))   \\ 
    &= \log  \gamma_1'(\gamma_2(\theta)) \ud \log\ (\gamma_2'(\theta))+ \log  \gamma_1'(\gamma_2(\theta)) \ud 
    \log v.
\end{aligned}
\end{equation}

\end{subequations}
Thus
\begin{equation}
\begin{aligned}
    \dh (\log(\gamma') \ud\log(v)) &= ((\dh_0)^* - (\dh_1)^* + (\dh_2)^*) (\log(\gamma') \ud\log(v)) \\
    &= \log(\gamma_1'\circ\gamma_2) \ud\log(\gamma_2').
\end{aligned}
\end{equation}
This completes the proof of \cref{main_lemma}.
\end{proof}
% The last step in the proof of \cref{main_prop} is to integrate $z_2$ over the fiber, yielding the Bott-Thurston
% cocycle~\eqref{bott_R} as promised.
% \end{proof}
% \CW{I'd prefer it if we could avoid nested proof environments.}

% more sections....

%\bibliographystyle{alpha}
%\bibliography{references}

\end{document}